\newcommand{\Rmnum}[1]{\expandafter\@slowromancap\romannumeral #1@}
\newtheorem{theorem}{Theorem}[section]
\newtheorem{lemma}{Lemma}[section]
\newtheorem{remark}{Remark}[section]
			\newcommand{\N}{{\mathbb N}}
			\newcommand{\C}{{\mathbb C}}
			\newcommand{\R}{{\mathbb R}}
			\newcommand{\bean}{\begin{eqnarray*}}
				\newcommand{\eean}{\end{eqnarray*}}
\numberwithin{equation}{section}
\begin{document}
\theoremstyle{plain}			
\title{\bf Existence and asymptotics of normalized solutions for logarithmic Schr\"{o}dinger system
\thanks{Supported by NSFC-12171265.
E-mail addresses: zhangqian9115@mail.tsinghua.edu.cn (Q. Zhang), zou-wm@mail.tsinghua.edu.cn (W. M. Zou)} }			
\date{}
\author{
{\bf Qian Zhang and Wenming Zou}\\
\footnotesize
\it  Department of Mathematical Sciences, Tsinghua University, Beijing 100084, China.\\ }							
%%%%%%%-------------------------------??===Abstract-------------------------------
\maketitle
				
\begin{center}
\begin{minipage}{120mm}
\begin{center}{\bf Abstract}\end{center}		
This paper is concerned with the following logarithmic Schr\"{o}dinger system:
$$\left\{\aligned\ &\ -\Delta u_1+\omega_1u_1=\mu_1 u_1\log u_1^2+\frac{2p}{p+q}|u_2|^{q}|u_1|^{p-2}u_1,\\
\ &\  -\Delta u_2+\omega_2u_2=\mu_2 u_2\log u_2^2+\frac{2q}{p+q}|u_1|^{p}|u_2|^{q-2}u_2,\\
\ &\ \int_{\Omega}|u_i|^2\,dx=\rho_i,\ \ i=1,2,\\
\ &\ (u_1,u_2)\in H_0^1(\Omega;\mathbb R^2),\endaligned\right.$$
where $\Omega=\R^N$ or $\Omega\subset\mathbb R^N(N\geq3)$ is a
bounded smooth domain, $\omega_i\in\mathbb R$, $\mu_i,\ \rho_i>0,\ i=1,2.$ Moreover,
$p,\ q\geq1,\ 2\leq p+q\leqslant 2^*$, where $2^*:=\frac{2N}{N-2}$. By using a Gagliardo-Nirenberg inequality and careful estimation
of $u\log u^2$, firstly, we will provide a unified proof of the existence of the
normalized ground states solution for all $2\leq p+q\leqslant 2^*$.
Secondly, we consider the stability of normalized
ground states solutions. Finally, we analyze the behavior of
solutions for Sobolev-subcritical
 case and pass the limit as the exponent $p+q$ approaches
to $2^*$.
 Notably, the uncertainty of sign of
$u\log u^2$ in $(0,+\infty)$ is one of the difficulties of
 this paper, and also one of the
motivations we are interested in. In particular, we can establish the
existence of positive normalized ground states solutions for
the Br\'{e}zis-Nirenberg type problem with
logarithmic perturbations (i.e., $p+q=2^*$). 
In addition, our study includes proving the existence of solutions to the 
logarithmic type Br\'{e}zis-Nirenberg problem with and without the $L^2$-mass 
$\int_{\Omega}|u_i|^2\,dx=\rho_i(i=1,2)$ constraint by two different methods, respectively.
Our results seems to be the first result
of the normalized solution of the coupled nonlinear Schr\"{o}dinger system with logarithmic perturbation.

\vskip0.23in
{\bf Key words:} Logarithmic Schr\"{o}dinger system; Br\'{e}zis-Nirenberg problem; Normalized solution;
Existence and stability; Behavior of solutions.
\vskip0.23in
{\bf Mathematics Subject Classification: 35J20, 35J60}
\vskip0.1in					
\end{minipage}
\end{center}
%%%%%%-------------------------------Section 1=======????====Introduction-------------------------------
\vskip0.18in
\section{Introduction}
Consider the following coupled Schr\"{o}dinger system with   logarithmic terms:
\begin{equation}\label{eq:1}
	\left\{\aligned\ &\ -\Delta u_1+\omega_1u_1=\mu_1 u_1\log u_1^2+\frac{2p}{p+q}|u_2|^{q}|u_1|^{p-2}u_1,\\
\ &\  -\Delta u_2+\omega_2u_2=\mu_2 u_2\log u_2^2+\frac{2q}{p+q}|u_1|^{p}|u_2|^{q-2}u_2,\\
\ &\ \int_{\Omega}|u_i|^2\,dx=\rho_i,\ \ i=1,2,\\
\ &\ (u_1,u_2)\in H_0^1(\Omega;\mathbb R^2),\endaligned\right.
\end{equation}
where $\Omega=\R^N$ or $\Omega\subset\mathbb R^N(N\geq3)$ is a
bounded smooth domain,
$\omega_i\in\mathbb R$, $\mu_i,\ \rho_i>0,\ i=1,2$ and
$p,\ q\geq 1,\ 2\leq p+q\leqslant 2^*$, where $2^*:=\frac{2N}{N-2}$.
System \eqref{eq:1} is closely related to the time-dependent nonlinear
logarithmic type Schr\"{o}dinger system:
\begin{equation}\label{eq:system_schro}
\left\{\aligned\ &\ \text{i}\partial_t\Psi_1+\Delta\Psi_1
+\Psi_1(\mu_1\log\Psi_1^2+|\Psi_2|^{q}|\Psi_1|^{p-2})=0,\\
\ &\ \text{i}\partial_t\Psi_2+\Delta\Psi_2+\Phi_2
(\mu_2\log\Psi_2^2+|\Psi_1|^{p}|\Psi_2|^{q-2})=0,\endaligned\right.
\end{equation}
where $\text{i}$ denotes the imaginary unit, $\Psi_i:(0,+\infty)\times\Omega\to\mathbb C$
and, for every $t>0$, $\Psi_i(t,\cdot)\in H^1_0(\Omega;\mathbb C),\ i=1,2$.
Problem \eqref{eq:system_schro} appears
in various physical fields, for instance in quantum mechanics,
quantum optics, nuclear physics, transport and diffusion phenomena, open quantum
systems, effective quantum gravity, theory of superfluidity and Bose-Einstein condensation. We refer the
readers to \cite{1,4,3,8,7,001,29,27} and their references for more
information on the related physical backgrounds.

\vskip0.1in
The solution  to \eqref{eq:system_schro} satisfies
the following conserved quantities:
$$\mathcal{Q}(\Psi_i)=\int_\Omega|\Psi_i|^2\,dx,\ \ i=1,2,$$
$$\aligned\mathcal{E}(\Psi_1,\Psi_2):&=\frac{1}{2}\int_{\Omega}\left(|\nabla\Psi_1|^2
+|\nabla\Psi_2|^2-\mu_1\Psi_1^2(\log\Psi_1^2-1)
-\frac{4}{p+q}|\Psi_1|^{p}|\Psi_2|^{q}-\mu_2\Psi_2^2(\log\Psi_2^2-1)\right)\,dx.\endaligned$$
In this paper, we will look for standing wave solutions
$(\Psi_1(t,x),\Psi_2(t,x))=(e^{i\omega_1 t}u_1(x),e^{i\omega_2 t}u_2(x))$ of
\eqref{eq:system_schro}
such that $(u_1,u_2)\in H^1_0(\Omega;\mathbb R^2)$ and
\begin{equation}\label{eq:mass_constraint}
\mathcal{Q}(u_i)=\rho_i,\ \ i=1,2,
\end{equation}
for some fixed $\rho_1,\rho_2>0$.

We recall that the study of the a single equation with logarithmic
perturbation:
\begin{equation}\label{eq3}
-\Delta u+V(x)u=u\log u^2,\ \ \ \ x\in\R^N.
\end{equation}
The logarithmic term  $u\,\text{log}\,u^2$ brings about essential difficulties
for getting the solutions of \eqref{eq:1}. In particular, the functional associated with the logarithmic term
$$\mathcal{K}(u):=\int_{\R^N}u^2\log u^2\,dx$$
is not differentiable in the space $H^1(\R^N)$. According to the following
standard logarithmic
Sobolev inequality (see \cite[Theorem 8.14]{LiebLoss}):
\begin{equation}\label{eq:5}
\int_{\R^N}u^2\log u^2\,dx\leq \frac{a^2}{\pi}|\nabla u|_{L^2(\R^N)}^2+(\log |u|_{L^2(\R^N)}^2
-N(1+\log a))|u|_{L^2(\R^N)}^2,
\end{equation}
where $u\in H^1(\R^N)$, $a>0$. Notice that $\mathcal{K}(u)<+\infty$ for all
$u\in H^1(\R^N)$, but there exists
$u\in H^1(\R^N)$ such that $\mathcal{K}(u)=-\infty$. Indeed, choosing smooth function
$$u(x)=\left\{\aligned\ &\ (|x|^{\frac N2}\log|x|)^{-1},\ \ &|x|\geq 3,\\
\ &\ 0,\ \ &|x|\leq 2,\endaligned\right.$$
one can verify directly that $u\in H^1(\R^N)$, but $\mathcal{K}(u)=-\infty$. Thus, in
general,
$\int_{\R^N} u^2\log u^2\,dx$ fails to be finite and losses $C^1$-smoothness on
$H^1(\R^N)$.
In order to overcome this technical difficulty and study the existence of solutions, several approaches developed so far
in the literature. To make the corresponding
functional of problem \eqref{eq3} well defined and $C^1$
smooth, Cazenave \cite{mma9} worked in an Orlicz space. By applying non-smooth critical point theory for lower semi-continuous
functionals, Squassina and Szulkin \cite{mma23} studied the existence of the positive ground state solution
to the logarithmic Schr\"{o}dinger equation \eqref{eq3} with periodic
potential. Via the directional derivative and constrained minimization method, Shuai \cite{mma22} investigated
the existence of positive ground state solution for the logarithmic
Schr\"{o}dinger equation \eqref{eq3} under different types of potentials. Also using direction
derivative and constrained minimization method, Deng, Pi and Shuai \cite{mma} considered the existence
of the positive ground state solutions  to  the logarithmic Schr\"{o}dinger equation \eqref{eq3}
in Sobolev-subcritical and Sobolev-critical cases. However, the normalized solutions with $L^2$ constraints are not considered there.

%In addition, they analyze the behavior of solutions
%for subcritical problem and pass the limit as the exponent
%$p$ approaches to $2^*$. A natural and interesting question is what happens to the
%behavior of the solution as $p+q$ tends to $2^*$ for the coupled system \eqref{eq:1}?

\vskip0.1in
Recently, Deng, He, Pan and Zhong
\cite{deng} proved the existence of positive solution for the
equation
\begin{equation}\label{eq4}
	-\Delta u+\omega u=\mu u\log u^2+|u|^{p-2}u\ \ \text{in}\ \ \R^N,
	\end{equation}
where $\Omega\subset\R^N(N\geq 3)$ is a bounded smooth domain, $\lambda,\mu\in\R$.
The equation \eqref{eq4} considered in the papers \cite{deng} and \cite{mma} are closely 
related to the classical Br\'{e}zis-Nirenberg type problem:
\begin{equation}\label{eq7}
-\Delta u=|u|^{2^*-2}u+\lambda u\ \ \text{in}\ \ \Omega\subset
 {\mathbb{R}}^{N}(N\geq 3).
\end{equation}
Br\'{e}zis and Nirenberg \cite{5}  found out that the existence of a solution depends heavily
on the values of $\lambda$ and $N$.

\vskip0.1in

To the best of our knowledge, it seems that there is no relevant result about the  logarithmic
Schr\"{o}dinger system \eqref{eq:1}. So, the second purpose of this paper is to fill this gap for the  logarithmic
Schr\"{o}dinger system.

\vskip0.12in

The motivation we consider is the normalized solutions of system \eqref{eq:1}.  When the nonlinear Schr\"{o}dinger equations or systems  have  pure power nonlinearities and are defined
on the whole space $\R^N$, the existence of the normalized solutions
 have been widely studied. We only refer the readers to
\cite{n5,n7,n9,n23,n25} and  more recent recent papers
\cite{mzhong,mlu,mmolle} and the references therein. However, when we consider such problems defined on  the  bounded smooth domains of $\R^N(N\geqslant1)$, the situation will be rather different.
A  key difference is that $\mathbb R^N$ is invariant under
translations and dilations. As we know, translations are
responsible for a loss of compactness. Further, dilations can be used to
produce variations and eventually construct natural constraints such as the so-called
Poho\v{z}aev manifold.  Note that  Noris, Tavares
and Verzini \cite{APDE} considered the following equation with
power-type nonlinearities:
\begin{equation}\label{eq2}
\left\{\aligned&-\Delta u+\lambda u=|u|^{p-1}u\ \ \text{in}\ \ \Omega,\\
&\int_{\Omega}u^2\,dx=\rho>0,\endaligned\right.
\end{equation}
where both the existence and orbital stability on the spherical domain $\Omega=B_1$ are studied. After that,
in \cite{CVPDE}, Pierotti and Verzini considered the general bounded domain. For the nonlinear Schr\"{o}dinger
system, Noris, Tavares and Verzini \cite{Nonlinearity} considered the existence and stability of the normalized
solutions on a bounded domain.

Since there is no result in this direction for
logarithmic Schr\"{o}dinger system, our third motivation is to prove the existence and
stability of the normalized solutions of \eqref{eq:1} on a general bounded domain
$\Omega\subset\R^N$.

We now formulate our problem in a variational setting as follows: consider
$$\aligned\mathcal{I}(u_1,u_2):&=
\frac{1}{2}\int_{\Omega}(|\nabla u_1|^2+|\nabla u_2|^2+\omega_1 u_1+\omega _2 u_2)\,dx\\
&\ \ \ \ -\frac{1}{2}\int_\Omega\left(\mu_1 u_1^2(\log u_1^2-1)
+\frac{4}{p+q}|u_1|^{p}|u_2|^{q}+
\mu_2 u_2^2(\log u_2^2-1)\right)\,dx,
\endaligned$$
which is well defined on the space $\mathcal{H}(\Omega):=H_1\times H_2$, where
$$H_i:=\{u_i\in H_0^1(\Omega): u_i^2\log u_i^2\in L^1(\Omega)\},\ i=1,2.$$
When $u_i=0$, we define $u_i^2\log u_i^2=0$.
If $\Omega\subset\R^N$ be a smooth bounded domain, $(u_1,u_2)\in H_0^1(\Omega;\R^2)$ and 
for all $\varphi_1,\ \varphi_2\in C_0^\infty(\Omega)$, we
compute the Gateaux derivative
\begin{equation}\label{eq37-5}
\aligned \langle\mathcal{I}^\prime (u_1,u_2),(\varphi_1,\varphi_2)\rangle:&=
\int_{\Omega} (\nabla u_1\nabla \varphi _1+\omega_1u_1\varphi_1+
\nabla u_2\nabla \varphi _2+\omega_2u_2\varphi _2 )\,dx\\
&\ \ \ \
-\int_\Omega\bigg(\mu_1 u_1\varphi _1\log u_1^2
+\mu_2 u_2\varphi _2\log u_2^2
+\frac{2p}{p+q}|u_2|^{q}|u_1|^{p-2}u_1\varphi_1\\
&\ \ \ \
+\frac{2q}{p+q}|u_1|^{p}|u_2|^{q-2}u_2\varphi_2\bigg)\,dx.\endaligned
\end{equation}
Moreover, $(u_1, u_2)$ is a normalized solution of system \eqref{eq:1}, that is, solutions of \eqref{eq:1} can be
 identified with critical points of the associated energy functional
$$\aligned\mathcal{E}(u_1,u_2)&=\frac{1}{2}\int_{\Omega}(|\nabla u_1|^2+|\nabla u_2|^2)\,dx\\
&\ \ \ \ -\frac{1}{2}\int_\Omega\left(\mu_1 u_1^2(\log u_1^2-1)
+\frac{4}{p+q}|u_1|^{p}|u_2|^{q}+
\mu_2 u_2^2(\log u_2^2-1)\right)\,dx\endaligned$$
restricted to the mass constraint
\begin{equation}\label{eq:defM}
\mathcal{M}=\mathcal{M}_{\rho_1,\rho_2} :=\left\{(u_1,u_2)\in H^1_0(\Omega;\mathbb R^2):
\int_\Omega u_i^2\,dx=\rho_i,\ \ i=1,2 \right\},
\end{equation}
with $\omega_i$ playing the role of Lagrange multipliers. Our main aim is to
provide conditions on $p$ and $(\rho_1,\rho_2)$ (and also on $\mu_1,\mu_2$) so that
$\mathcal{E}|_{\mathcal{M}}$ admits local minima. We
call such solutions normalized ground states solutions. A common key tool in the study of 
normalized solution is the Gagliardo-Nirenberg inequality (see \eqref{1.12} below). 
We introduce a sharp
Gagliardo-Nirenberg inequality \cite[pp 458-462]{1},  for any
$u\in H^1(\mathbb R^N),$
\begin{equation}\label{1.12}
\int_{\R^N}|u|^{p+q}\,dx\leq C_{N,p+q}\left(\int_{\R^N}u^2\,dx\right)
^{\frac{p+q}{2}-\frac{N(p+q-2)}{4}}
\left(\int_{\R^N}|\nabla u|^2\,dx\right)^{\frac{N(p+q-2)}{4}}.
\end{equation}
It is proved in \cite{470} that
$$C_{N,p+q}=\inf_{u\in H^1(\R^N)\setminus\{0\}}\frac{\|u\|_{L^{p+q}(\R^N)}^{p+q }}
{\|\nabla u\|_{L^2(\R^N)}^{2a}\|u\|_{L^2(\R^N)}^{4r}}
=\frac{\|Z\|_{L^{p+q}(\R^N)}^{p+q }}
{\|\nabla Z\|_{L^2(\R^N)}^{2a}\|Z\|_{L^2(\R^N)}^{4r}},$$
where $Z$ is, up to translations, the unique positive solution of the equation:
\begin{equation}\label{1.10}
-\Delta Z+Z=Z^{p+q-1}.
\end{equation}
Furthermore, the exponents $a$ and $r$ as
\begin{equation}\label{eq:newexponent}
a=a(N,p,q):=\frac{N(p+q-2)}{4},\ \ \ \
r=r(N,p,q):=\frac{ p+q }{4}-\frac{N(p+q-2)}{8}.
\end{equation}
We remark that this inequality holds also in $H_0^1(\Omega)$,
for any bounded domain $\Omega$, with the
same constant $C_{N,p+q}.$ In particular, the inequality on $H_0^1(\Omega)$ is strict unless $u$ is trivial.

In the special case $p+q=2^*$ and $N\geq 3$,
\begin{equation}\label{eq:Sobolev_constant}
S_N:=C_{N,2^*}.
\end{equation}
Observe that $S_N$ is just the best Sobolev constant of the embedding
$\mathcal{D}^{1,2}(\mathbb R^N)
\hookrightarrow L^{2^*}(\mathbb R^N)$:
$$|v|_{2^*}^{2^*}\leq S_N|\nabla v|_{2}^{2^*}\ \
\text{for every}\ \ v\in
\mathcal{D}^{1,2}(\mathbb R^N).$$	

In general,  the aim of the present paper is threefold: firstly, since the existence of the  normalized solution to the logarithmic Schr\"{o}dinger system is unknown, we give
the existence and stability of the normalized solutions to the logarithmic Schr\"{o}dinger
system \eqref{eq:1} on bounded domains. Our results are even for the case of  the scalar equation.
Secondly, our study includes the Br\'{e}zis-Nirenberg type problem with logarithmic perturbations
(i.e., $p+q=2^*$). In this case, we need to overcome
two major challenges: on bounded domains, since the transformation 
introduced in \cite{n25} is not available in this case, one can not obtain a bounded Palais-Smale 
sequence and analyze its compactness as in \cite{n25} and references therein; when
faced with the Sobolev-critical case, the compactness of the Sobolev
embedding is not guaranteed. Finally, we consider the asymptotic behavior of
the solutions to the Sobolev-subcritical
case as  $p+q\to 2^*$.

\vskip0.12in

Before state  our main results, let us briefly describe the ideas of the proofs of
 Theorems \ref{prop:compact_intro}-\ref{prop:supercritical} below,
and incidentally give some notations that are needed to describe the theorems.
 When
$2\leq p+q\le 2^*$,
$\mathcal{E}|_{\mathcal{M}}$ is not
bounded below (see Lemma \ref{lemma:geometry_mp}). Nonetheless we will
show that, even though no global minima can exist, local ones do, in case
$\rho$ belongs to some explicit set. To detect the existence of such minima
we need to introduce some auxiliary problem and further notations. We follow the method of Noris, Tavares, Verzini
\cite{Nonlinearity},
let, for $\alpha\geq \lambda_1(\Omega)$, in what follows, we take
 $\Omega\subset\mathbb R^N(N\geq3)$ a Lipschitz bounded domain. Define
\begin{equation}\label{eq:Balpha0}
\mathcal{B}_\alpha:=\left\{(u_1,u_2)\in\mathcal{M}:
\int_\Omega(|\nabla u_1|^2+|\nabla u_2|^2)\,dx\leq(\rho_1+\rho_2)\alpha\right\},
\end{equation}
\begin{equation}\label{eq:Balpha}
\mathcal{U}_\alpha:=\left\{(u_1,u_2)\in\mathcal{M}:
\int_\Omega(|\nabla u_1|^2+|\nabla u_2|^2)\,dx=(\rho_1+\rho_2)\alpha\right\}.
\end{equation}
	
Since $\mathcal{B}_\alpha$ contains a pair of suitably normalized first eigenfunctions, i.e.,
$(\sqrt{\rho}_1\varphi_1,\sqrt{\rho}_2\varphi_2)$, we obtain that $\mathcal{B}_\alpha\neq \varnothing$. Moreover, let
\begin{equation}\label{eq:calpha}
c_\alpha:=\inf_{\mathcal{B}_\alpha}\mathcal{E},\ \ \  \ \
\hat{c}_\alpha:=\inf_{\mathcal{U}_\alpha}\mathcal{E}.
\end{equation}
Because $\mathcal{B}_\alpha$ weakly closed in $\mathcal{M}$, so in the
 Sobolev subcritical case, on $\mathcal{U}_\alpha\subset\mathcal{B}_\alpha$, for any $\alpha\geq \lambda_1(\Omega)$
 can reach $c_\alpha$.
Therefore, in order to find a solution of \eqref{eq:1}, it is sufficient to find
$\alpha$ such that $c_\alpha<\hat{c}_\alpha$. Furthermore, in the Sobolev-critical case, since $H^1_0(\Omega)$ is not compactly embedded in $L^{2^*}(\Omega)$,
$\mathcal{E}|_{\mathcal{M}}$ is no longer weakly lower semicontinuous. To overcome this difficulty, in
the spirit of Br\'{e}zis, Nirenberg \cite{{BL83}}, we are able to recover the compactness of
the minimizing sequences associated to
$c_\alpha$ by imposing a bound on the masses $\rho$ and on $\alpha$. More precisely, we have the
following key result.
	
\begin{theorem}[Sobolev-critical cases: $c_\alpha$ is achieved]\label{prop:compact_intro} 
	Let $\Omega\subset \R^N$  be a smooth bounded domain, consider $N\ge3$ and $p+q=2^*$. Suppose that
$\alpha\geq\lambda_1(\Omega)$ are such that
\begin{equation}\label{eq:compact_intro}
(\rho_1+\rho_2)\left(\alpha-\lambda_1(\Omega)\right)\le\frac{1}{\Lambda^{\frac{N-2}{2} }},
\end{equation}
where
\begin{equation}\label{eq:def_Lambda}
\Lambda:=\max_{\{x^2+y^2=1\}}S(N)\left(\frac{(N-2)}{2e}
(\mu_1|x|^{2^*}+\mu_2|y|^{2^*})
+\frac{2(N-2)}{N}|x|^{p}
|y|^{q} +\frac{|\Omega|}{e}(\mu_1+\mu_2)\right),
\end{equation}
the definition of $S(N)$ here is given in \eqref{eq:Sobolev_constant}.
Then any minimizing sequence associated to $c_\alpha$ is relatively compact in
$\mathcal{B}_\alpha$. In particular, $c_\alpha$ is achieved.
\end{theorem}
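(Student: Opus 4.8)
The plan is to run a Br\'{e}zis--Nirenberg type concentration argument, exploiting the quantitative bound \eqref{eq:compact_intro} to preclude the loss of compactness carried by the critical coupling term. First I would fix a minimizing sequence $(u_{1,n},u_{2,n})\subset\mathcal{B}_\alpha$ for $c_\alpha$. By the very definition \eqref{eq:Balpha0} of $\mathcal{B}_\alpha$ the sequence is bounded in $H_0^1(\Omega;\mathbb{R}^2)$, so up to a subsequence $u_{i,n}\rightharpoonup u_i$ in $H_0^1(\Omega)$, $u_{i,n}\to u_i$ in $L^s(\Omega)$ for every $s<2^*$ and a.e.\ on $\Omega$, by Rellich--Kondrachov on the bounded domain $\Omega$. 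Strong $L^2$ convergence preserves the masses, $\int_\Omega u_i^2\,dx=\rho_i$, while weak lower semicontinuity of the Dirichlet integral gives $\int_\Omega(|\nabla u_1|^2+|\nabla u_2|^2)\,dx\le(\rho_1+\rho_2)\alpha$; hence $(u_1,u_2)\in\mathcal{B}_\alpha$ and $\mathcal{E}(u_1,u_2)\ge c_\alpha$.

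Next I would split the energy along $v_{i,n}:=u_{i,n}-u_i\rightharpoonup0$. The Dirichlet part splits exactly by orthogonality, and a Br\'{e}zis--Lieb lemma for the product term (proved separately from the a.e.\ convergence, the uniform $L^{2^*}$ bound and the identity $p/2^*+q/2^*=1$) gives $\int_\Omega|u_{1,n}|^p|u_{2,n}|^q\,dx=\int_\Omega|u_1|^p|u_2|^q\,dx+\int_\Omega|v_{1,n}|^p|v_{2,n}|^q\,dx+o(1)$. The logarithmic contributions are weakly continuous here: combining the logarithmic Sobolev inequality \eqref{eq:5} with the elementary pointwise bounds $t^2\log t^2\ge-1/e$ and $t^2\log t^2\le C\,t^{2^*}$ shows that $\{u_{i,n}^2\log u_{i,n}^2\}$ is uniformly integrable, whence $\int_\Omega u_{i,n}^2(\log u_{i,n}^2-1)\,dx\to\int_\Omega u_i^2(\log u_i^2-1)\,dx$ by Vitali's theorem. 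Collecting terms and setting $T_n:=\int_\Omega(|\nabla v_{1,n}|^2+|\nabla v_{2,n}|^2)\,dx$, I obtain
\[
\mathcal{E}(u_{1,n},u_{2,n})=\mathcal{E}(u_1,u_2)+R_n+o(1),\qquad R_n:=\frac12 T_n-\frac{N-2}{N}\int_\Omega|v_{1,n}|^p|v_{2,n}|^q\,dx,
\]
and since $\mathcal{E}(u_{1,n},u_{2,n})\to c_\alpha\le\mathcal{E}(u_1,u_2)$ this forces $\limsup_n R_n\le0$.

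The heart of the matter is to upgrade this to $T_n\to0$, and this is exactly where \eqref{eq:compact_intro} and the constant $\Lambda$ of \eqref{eq:def_Lambda} enter. Applying the Poincar\'{e} inequality to the weak limit, $\int_\Omega|\nabla u_i|^2\,dx\ge\lambda_1(\Omega)\rho_i$, together with the exact Dirichlet splitting and the constraint $(u_{1,n},u_{2,n})\in\mathcal{B}_\alpha$, I get $\limsup_n T_n\le(\rho_1+\rho_2)(\alpha-\lambda_1(\Omega))\le\Lambda^{-(N-2)/2}$. On the other hand, H\"{o}lder's inequality with exponents $2^*/p,\,2^*/q$ followed by the Sobolev inequality \eqref{eq:Sobolev_constant} yields, after writing $\int_\Omega|\nabla v_{1,n}|^2\,dx=T_n x_n^2$ and $\int_\Omega|\nabla v_{2,n}|^2\,dx=T_n y_n^2$ with $x_n^2+y_n^2=1$,
\[
\int_\Omega|v_{1,n}|^p|v_{2,n}|^q\,dx\le S_N\Big(\int_\Omega|\nabla v_{1,n}|^2\,dx\Big)^{p/2}\Big(\int_\Omega|\nabla v_{2,n}|^2\,dx\Big)^{q/2}=S_N\,T_n^{2^*/2}\,|x_n|^p|y_n|^q,
\]
so that $R_n\ge\frac12 T_n\big(1-\tfrac{2(N-2)}{N}S_N\,T_n^{2/(N-2)}|x_n|^p|y_n|^q\big)$. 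Since the coupling part of $\Lambda$ is precisely $S_N\tfrac{2(N-2)}{N}|x|^p|y|^q$ and its remaining logarithmic pieces are strictly positive, one has $\Lambda>\max_{x^2+y^2=1}S_N\tfrac{2(N-2)}{N}|x|^p|y|^q$; together with $T_n^{2/(N-2)}\le\Lambda^{-1}+o(1)$ this produces a fixed $\delta>0$ with $R_n\ge\tfrac12(\delta+o(1))T_n$. As $\limsup_n R_n\le0$ and $T_n\ge0$, necessarily $T_n\to0$, i.e.\ $v_{i,n}\to0$ strongly in $H_0^1(\Omega)$; the minimizing sequence is therefore relatively compact in $\mathcal{B}_\alpha$ and $c_\alpha$ is attained.

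The main obstacle is the critical coupling term $\int_\Omega|v_{1,n}|^p|v_{2,n}|^q\,dx$: at the Sobolev exponent $H_0^1(\Omega)$ does not embed compactly into $L^{2^*}(\Omega)$, so $R_n$ is not weakly lower semicontinuous and a Sobolev ``bubble'' could in principle escape to the limit. The entire purpose of the threshold \eqref{eq:compact_intro} is to confine the concentrated Dirichlet energy $T_n$ below the level at which such a bubble could form, thereby making $\delta>0$ available. A secondary but delicate point, needed both for the convergence of the logarithmic integrals and for the very definition of $\Lambda$, is the quantitative control of $u\log u^2$ through \eqref{eq:5} and the two-sided pointwise estimates on $t^2\log t^2$.
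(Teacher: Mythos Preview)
Your proof follows the same Br\'{e}zis--Nirenberg concentration scheme as the paper (carried out in Lemma~\ref{prop:Sobcritconv}): extract a weak limit in $\mathcal{B}_\alpha$, split the energy along $v_{i,n}=u_{i,n}-u_i$, and use the mass bound together with Poincar\'{e} to rule out concentration. There is one technical slip: the pointwise bound $t^2\log t^2\le C\,t^{2^*}$ does \emph{not} give uniform integrability of $\{u_{i,n}^2\log u_{i,n}^2\}$, since at the critical exponent $\{|u_{i,n}|^{2^*}\}$ is merely bounded in $L^1(\Omega)$. The fix is immediate: use $t^2\log t^2\le C_s\,t^s$ for some fixed $2<s<2^*$; then $\{|u_{i,n}|^s\}$ is bounded in $L^{2^*/s}(\Omega)$ with $2^*/s>1$, and uniform integrability on the bounded domain follows, so Vitali applies as you intend.

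Where you differ from the paper is in the bookkeeping of the logarithmic terms. The paper does not pass to the limit in them; it applies its Br\'{e}zis--Lieb lemma for the logarithm (Lemma~\ref{le332}) and keeps the log pieces of $v_{i,n}$ inside a full remainder $\mathcal{E}(v_{1,n},v_{2,n})\le o(1)$, which is then estimated via \eqref{eq:importantestimateG} and \eqref{eq:lambda_max_def} with the entire constant $\Lambda$. You instead show the log integrals converge and keep only the gradient/coupling remainder $R_n$. Your route is legitimate and in fact slightly sharper: the concentration constant you actually face is $\max_{x^2+y^2=1}S_N\tfrac{2(N-2)}{N}|x|^p|y|^q$, which is strictly smaller than $\Lambda$ because of the positive additive term $S_N\tfrac{|\Omega|}{e}(\mu_1+\mu_2)$ in \eqref{eq:def_Lambda}. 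That strict gap is precisely what lets your argument absorb the non-strict inequality in \eqref{eq:compact_intro}, whereas the paper's Lemma~\ref{prop:Sobcritconv} is stated and proved under strict inequality and then invoked for Theorem~\ref{prop:compact_intro}.
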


As we noticed, in the Sobolev-critical case, our results are also new even
for the single logarithmic Schr\"{o}dinge equation. In particular,
choosing $\rho_1=\rho_2=\rho$, $\mu_1=\mu_2=\mu$ and $u_1=u_2$, we have the following direct
consequence.
\begin{theorem}[Br\'{e}zis-Nirenberg problem: $p+q=2^*$]\label{critical}
	Let $\Omega\subset \R^N$  be a smooth bounded domain. If $\mu>0$ and
$$0<\rho\le\left(\frac{R(\Omega,N,2^*)}{\left(\frac{(N-2)}{2e}S(N)
+\frac{|\Omega|}{e}\right)\mu}\right)^{\frac{N-2}{2} },$$
then the problem
$$\left\{\aligned\ &\ -\Delta u + \omega u=\mu u \log u^2+|u|^{2^*-2}u,\\
\ &\ \int_\Omega u^2\,dx=\rho, \ \  u\in H^1_0(\Omega),
\endaligned\right.$$
admits a positive solution $u$, which is a local minimizer of the associated energy.
Moreover, the corresponding set of local ground states is orbitally stable.
\end{theorem}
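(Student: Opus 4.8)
The plan is to deduce Theorem \ref{critical} from Theorem \ref{prop:compact_intro} by passing to the diagonal. Fix $p=q=\frac{2^*}{2}=\frac{N}{N-2}$ (so $p,q\ge1$ and $p+q=2^*$), set $\mu_1=\mu_2=\mu$ and $\rho_1=\rho_2=\rho$, and restrict the system energy to $\{u_1=u_2=u\}$. A direct computation gives
$$\mathcal{E}(u,u)=\int_\Omega|\nabla u|^2\,dx-\int_\Omega\mu u^2(\log u^2-1)\,dx-\frac{2}{2^*}\int_\Omega|u|^{2^*}\,dx=2E(u),$$
where $E$ is the energy attached to the scalar equation, while $\mathcal{B}_\alpha$ collapses to the scalar set $B_\alpha:=\{u\in H_0^1(\Omega):\int_\Omega u^2\,dx=\rho,\ \int_\Omega|\nabla u|^2\,dx\le\rho\alpha\}$. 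Because $p=q$, the two equations in \eqref{eq:1} coincide and the coupling coefficient $\frac{2p}{p+q}$ equals $1$, so a critical point $(u,u)$ of $\mathcal{E}$ on the diagonal is exactly a solution of the scalar problem. It therefore suffices to treat the constrained minimization $c_\alpha:=\inf_{B_\alpha}E$.

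Next I would run the proof of Theorem \ref{prop:compact_intro} directly on $E$. That argument controls $u\log u^2$ through the logarithmic Sobolev inequality \eqref{eq:5}, recovers compactness of minimizing sequences strictly below the Br\'ezis--Nirenberg threshold, and uses the nonlinearity only via the Gagliardo--Nirenberg bound \eqref{1.12}; none of this needs the coupling, so it carries over with $\Lambda$ replaced by its scalar analogue. Tracking the constants in this reduction—so that the Sobolev term $S(N)$ and the measure term $|\Omega|$ in the scalar version of \eqref{eq:def_Lambda} recombine—and optimizing the admissibility condition \eqref{eq:compact_intro} over $\alpha\ge\lambda_1(\Omega)$ produces a threshold on $\rho$ of exactly the stated form, with $R(\Omega,N,2^*)$ encoding the dependence on $\lambda_1(\Omega)$ and on the optimal $\alpha$. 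Under this bound $c_\alpha$ is attained at some $u_0\in B_\alpha$, and since $E(|u_0|)=E(u_0)$ we may take $u_0\ge0$.

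To promote $u_0$ to a solution I must make the gradient constraint inactive, i.e. $\int_\Omega|\nabla u_0|^2\,dx<\rho\alpha$, equivalently $c_\alpha<\hat c_\alpha$. As indicated in the introduction, choosing $\alpha$ inside the admissible range so that the infimum is not attained on $\mathcal{U}_\alpha$ places $u_0$ in the interior of $B_\alpha$; then $u_0$ is a local minimizer of $E$ on $M_\rho:=\{u\in H_0^1(\Omega):\int_\Omega u^2\,dx=\rho\}$, the Lagrange multiplier rule (in the Gateaux sense of \eqref{eq37-5}) yields some $\omega\in\R$ with $-\Delta u_0+\omega u_0=\mu u_0\log u_0^2+u_0^{2^*-1}$ weakly, and the strong maximum principle—after the standard regularity argument handling the logarithmic term—upgrades $u_0\ge0$ to $u_0>0$ in $\Omega$.

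Finally, orbital stability would follow from a Cazenave--Lions argument. The set $\mathcal{G}$ of nonnegative minimizers is nonempty and, by the compactness in Theorem \ref{prop:compact_intro}, compact in $H_0^1(\Omega)$, while the strict inequality $c_\alpha<\hat c_\alpha$ provides an energy barrier separating $\mathcal{G}$ from $\mathcal{U}_\alpha$. Using conservation of mass and energy along the scalar logarithmic flow, together with local well-posedness, one shows that data close to $\mathcal{G}$ stay trapped in $B_\alpha$ for all time, and the compactness of minimizing sequences then keeps the orbit near $\mathcal{G}$, which is orbital stability. The principal obstacle throughout is the Sobolev-critical loss of compactness, resolved in Theorem \ref{prop:compact_intro} by the mass bound; the points genuinely specific to this corollary are establishing the strict interior inequality $c_\alpha<\hat c_\alpha$ (so that $u_0$ solves the free equation) and adapting Cazenave--Lions to a \emph{local}, rather than global, minimum on a bounded domain, where trapping by the barrier $\hat c_\alpha>c_\alpha$ is essential.
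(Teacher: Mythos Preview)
Your proposal is correct and follows essentially the same route as the paper: compactness of minimizing sequences in $\mathcal{B}_\alpha$ under the mass bound (Theorem~\ref{prop:compact_intro} / Lemma~\ref{prop:Sobcritconv}), the strict inequality $c_\alpha<\hat c_\alpha$ via comparison with eigenfunctions (Lemmas~\ref{lem:c<hat_c}--\ref{thm:existence_bad_cond}) to land in the interior of $\mathcal{B}_\alpha$, and a Cazenave--Lions stability argument with the barrier $c_{\bar\alpha}<\tilde c_{\bar\alpha}$ (Lemmas~\ref{lem:stability_compact}--\ref{lem:c_tilde}). The paper simply specializes the system theorems to $\rho_1=\rho_2$, $\mu_1=\mu_2$, $p=q$, whereas you run the scalar argument directly; both give the stated threshold on $\rho$ after tracking constants. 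One small inaccuracy: the control of $u^2\log u^2$ in the compactness step and in the estimate of $\hat c_\alpha$ is obtained not from the logarithmic Sobolev inequality~\eqref{eq:5} but from the elementary pointwise bounds of Lemma~\ref{lemma:1}, namely $|s^2\log s^2|\le e^{-1}$ for $|s|\le1$ and $s^2\log s^2\le\frac{N-2}{2e}s^{2^*}$ for $s>1$; these are what produce the constants $\frac{N-2}{2e}S(N)$ and $\frac{|\Omega|}{e}$ in the threshold.
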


Based on the previous theorem, we introduce the following set of admissible masses
\begin{equation}\label{eq:defA}
A:=\left\{(\rho_1,\rho_2)\in(0,\infty)^2:
\aligned\ &\  c_\alpha<\hat{c}_\alpha\ \text{for some}\ \alpha\ge\lambda_1(\Omega),\\
 \
&\ \text{with}\ \alpha \ \text{satisfying \eqref{eq:compact_intro} if }\ \
p+q=2^*\endaligned \right\}\cup\left\{(0,0)\right\}.
\end{equation}
		
Notice that, as a matter of fact, $A$ depends on $\Omega$, $N$, $p,\ q$,
$\mu_1$, $\mu_2$. Moreover, if
$(\rho_1,\rho_2)\in A$, then we can choose the local minimizer
$(u_1,u_2)\in\mathcal{M}$ to be a
positive
solution of \eqref{eq:1} for some $(\omega_1,\omega_2)\in
\mathbb R^2$.

\vskip0.13in

\begin{theorem}[Existence]\label{prop:supercritical}
	Let $\Omega\subset \R^N$  be a smooth bounded domain and $2\leq p+q\le 2^*$. If $A$ is defined as in \eqref{eq:defA},
then $A$ is star-shaped with respect to
$(0,0)$. Moreover, there exists a positive constant $R=R(\Omega,N,p+q)$ such that if
$\rho_1,\rho_2>0$ are such that
\begin{equation}\label{eq:assnice}
	\left(\frac{(N-2)}{2e}S(N)\max\{\mu_1,\mu_2\}+\frac{|\Omega|}{e}(\mu_1+\mu_2)\right)(\rho_1+\rho_2)^{a-1}\le R(\Omega,N,p+q),
\end{equation}
then $(\rho_1,\rho_2)\in A$. Here $a$ and $r$ are defined as in \eqref{eq:newexponent}
and $R$ is explicit, see \eqref{eq:def_R}.
\end{theorem}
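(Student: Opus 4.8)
The plan is to read membership in $A$ through the sub-level structure of $\mathcal E$ on the mass sphere and then establish the two assertions separately. Since $\|\nabla u_i\|_2^2\ge\lambda_1(\Omega)\rho_i$ by the variational characterisation of $\lambda_1(\Omega)$, the constraint $\mathcal M$ only carries total gradient energies in $[\lambda_1(\Omega)(\rho_1+\rho_2),\infty)$, so $\mathcal B_\alpha=\bigcup_{\lambda_1(\Omega)\le\beta\le\alpha}\mathcal U_\beta$ and consequently $c_\alpha=\inf_{\lambda_1(\Omega)\le\beta\le\alpha}\hat{c}_\beta$. Hence $c_\alpha<\hat{c}_\alpha$ holds precisely when the constrained minimiser does not saturate the gradient bound, i.e.\ when the minimiser produced on $\mathcal B_\alpha$ (by weak lower semicontinuity in the subcritical range, and by Theorem \ref{prop:compact_intro} when $p+q=2^*$) lies strictly inside $\mathcal B_\alpha$. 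The whole proof reduces to producing, for the masses in question, one admissible $\alpha$ with this interior property.

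For the star-shapedness I would use the mass rescaling $\Phi_t(u_1,u_2):=(\sqrt{t}\,u_1,\sqrt{t}\,u_2)$, $t\in(0,1]$, a bijection $\mathcal M_{\rho}\to\mathcal M_{t\rho}$ that carries the $\rho$-sets $\mathcal B_\alpha,\mathcal U_\alpha$ onto the corresponding $t\rho$-sets \emph{with the same} $\alpha$, since it preserves the ratio gradient-energy/mass. A direct computation gives the identity
\[
\mathcal E(\Phi_t(u_1,u_2))=t\,\mathcal E(u_1,u_2)-\frac{t\log t}{2}(\mu_1\rho_1+\mu_2\rho_2)-\frac{2}{p+q}\bigl(t^{\frac{p+q}{2}}-t\bigr)\int_\Omega|u_1|^p|u_2|^q\,dx.
\]
For $t\in(0,1]$ one has $-t\log t\ge0$ and, because $p+q\ge 2$, also $t^{\frac{p+q}{2}}-t\le0$; thus both corrections are non-negative and $\mathcal E\circ\Phi_t\ge t\,\mathcal E+h(t)$ with $h(t):=-\tfrac{t\log t}{2}(\mu_1\rho_1+\mu_2\rho_2)$, the remainder being governed by $g(t):=\tfrac{2}{p+q}(t^{\frac{p+q}{2}}-t)\le0$ acting on $D:=\int_\Omega|u_1|^p|u_2|^q\,dx$. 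Equivalently, $\tfrac1t\,\mathcal E(\Phi_t\,\cdot)$ is the same energy with an attractive coupling of strictly smaller strength, which should only help interiority of the minimiser. Transporting the interior minimiser $u^\ast$ of $c_\alpha(\rho)$ by $\Phi_t$ (its gradient/mass ratio, hence its radius $\beta^\ast<\alpha$, is preserved) yields a strictly interior competitor for $t\rho$, and feeding the identity into both infima gives a relation of the form $\hat{c}_\alpha(t\rho)-c_\alpha(t\rho)\ge t\bigl(\hat{c}_\alpha(\rho)-c_\alpha(\rho)\bigr)+g(t)\,D(u^\ast)$; together with $(0,0)\in A$ this is meant to force $t\rho\in A$ for all $t\in[0,1]$.

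For the explicit sufficient condition I would bracket $c_\alpha$ and $\hat{c}_\alpha$. An upper bound for $c_\alpha$ comes from the admissible test pair $(\sqrt{\rho_1}\,\varphi_1,\sqrt{\rho_2}\,\varphi_2)$ of normalised first eigenfunctions (total gradient energy $\lambda_1(\Omega)(\rho_1+\rho_2)$), using the elementary bound $s\log s\ge-1/e$ to control $-\int u_i^2\log u_i^2\le|\Omega|/e$; dropping the non-positive coupling contribution gives $c_\alpha\le\tfrac{\lambda_1(\Omega)}{2}(\rho_1+\rho_2)+\tfrac{|\Omega|}{2e}(\mu_1+\mu_2)+\tfrac12(\mu_1\rho_1+\mu_2\rho_2)$. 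A lower bound for $\hat{c}_\alpha$ comes from the pointwise estimate $u_i^2\log u_i^2\le\tfrac{N-2}{2e}|u_i|^{2^*}$ (the case $\beta=\tfrac{2}{N-2}$ of $\log x\le x^\beta/(e\beta)$), Hölder's inequality $\int|u_1|^p|u_2|^q\le\|u_1\|_{2^*}^{p}\|u_2\|_{2^*}^{q}$, and the Sobolev/Gagliardo–Nirenberg inequality \eqref{1.12}; writing $\|\nabla u_1\|_2^2=(\rho_1+\rho_2)\alpha\,x^2$ and $\|\nabla u_2\|_2^2=(\rho_1+\rho_2)\alpha\,y^2$ with $x^2+y^2=1$ turns every nonlinear term into $[(\rho_1+\rho_2)\alpha]^{\frac{N}{N-2}}$ times a function of $(x,y)$ whose maximum over the unit circle is exactly the quantity defining $\Lambda$ in \eqref{eq:def_Lambda}. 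Comparing the two brackets reduces $c_\alpha<\hat{c}_\alpha$ to an inequality of the form $(\rho_1+\rho_2)(\alpha-\lambda_1(\Omega))>\mathrm{const}\cdot[(\rho_1+\rho_2)\alpha]^{\frac{N}{N-2}}$, solvable in $\alpha$ once the masses are small; in the subcritical range the exponents \eqref{eq:newexponent} replace $\tfrac{N}{N-2}$ by $a$ and produce the scaling $(\rho_1+\rho_2)^{a-1}$ of \eqref{eq:assnice}, and collecting the constants yields the explicit $R(\Omega,N,p+q)$.

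The main obstacle is twofold. In the star-shapedness, the coupling term $\int|u_1|^p|u_2|^q$ is not $2$-homogeneous in $L^2$, so it survives the rescaling as the remainder $g(t)\,D$ above; its sign $g(t)\le0$ points the wrong way in the crude estimate, and one must exploit the weakened-coupling interpretation of $\tfrac1t\mathcal E(\Phi_t\cdot)$ (or a tuned choice of $\alpha$ for $t\rho$) to conclude. In the Sobolev-critical case the second difficulty is that the chosen $\alpha$ must \emph{simultaneously} realise $c_\alpha<\hat{c}_\alpha$ and satisfy the compactness window \eqref{eq:compact_intro} of Theorem \ref{prop:compact_intro}; making these two constraints compatible through one explicit threshold is what forces the precise form of $R$ and the maximisation over $\{x^2+y^2=1\}$ in $\Lambda$. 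Keeping all constants uniform as $p+q$ ranges over $[2,2^*]$ is the part demanding the most care.
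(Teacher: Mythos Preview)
Your treatment of the explicit sufficient condition is essentially the paper's argument. The paper bounds $\hat c_\alpha$ from below by the pointwise estimate $u^2\log u^2\le\frac{N-2}{2e}|u|^{2^*}$ together with Sobolev and a maximisation over the quarter-circle (Lemma \ref{lem:hatc_from_below}), bounds $\hat c_{\lambda_j(\Omega)}$ from above by an eigenfunction test pair (Lemma \ref{lem:estimate_c}), and then optimises in $\alpha$ (Lemma \ref{thm:existence_bad_cond}); the final reduction from $\Lambda$ to the constant in \eqref{eq:assnice} is exactly your observation that the maximum in $t$ is attained at an endpoint. The compatibility with the compactness window \eqref{eq:compact_intro} is checked in the paper by the one-line estimate at the end of Lemma \ref{thm:existence_bad_cond}, which is the verification you flag as requiring care.

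The star-shapedness is where your proposal has a real gap, and you name it yourself. Your scaling identity is correct and yields
\[
\hat c_\alpha(t\rho)-c_\alpha(t\rho)\;\ge\;t\bigl(\hat c_\alpha(\rho)-c_\alpha(\rho)\bigr)+g(t)\,D(u^\ast),
\]
but $g(t)D(u^\ast)\le 0$, so nothing follows. The ``weakened coupling'' reading of $\tfrac1t\mathcal E(\Phi_t\,\cdot)$ is accurate, yet the implication ``weaker attractive coupling $\Rightarrow$ minimiser more interior $\Rightarrow$ $c_\alpha<\hat c_\alpha$ persists'' is not a proof: you would need a monotonicity of the gap $\hat c_\alpha-c_\alpha$ in the coupling strength, and no such statement is available without further work. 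In particular, for fixed $\alpha$ the scaled pair $\Phi_t(u^\ast)$ is strictly interior for $t\rho$, but its energy exceeds $t\,c_\alpha(\rho)+h(t)$ by exactly the bad term $-g(t)D(u^\ast)$, and that excess may already push it above $\hat c_\alpha(t\rho)$.

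The paper closes this gap by a different mechanism. After the rescaling $(\bar u_1,\bar u_2)\mapsto(s\bar u_1,s\bar u_2)$ (Lemma \ref{lem:starshap1}), it introduces in Lemma \ref{lem:curvetta} a tangential perturbation
\[
(U_1(t),U_2(t))=\Bigl(s\sqrt{\rho_1}\,\frac{\bar u_1+t\tau_1}{|\bar u_1+t\tau_1|_2},\,s\sqrt{\rho_2}\,\frac{\bar u_2+t\tau_2}{|\bar u_2+t\tau_2|_2}\Bigr),
\]
with $(\tau_1,\tau_2)$ chosen $L^2$-orthogonal to $(\bar u_1,\bar u_2)$ and so that both $\|(U_1,U_2)\|_{H^1_0}^2$ and $\mathcal E(U_1,U_2)$ strictly decrease at $t=0$. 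This manufactures, for each $s\in(0,1)$, a competitor in $\mathcal U_{\bar\alpha-\varepsilon}(s^2\rho)$ with energy strictly below $\hat c_{\bar\alpha}(s^2\rho)$; then Lemma \ref{lem:c<hat_c} with $\alpha_1=\bar\alpha-\varepsilon$ and $\alpha_2=\bar\alpha$ gives $(s^2\rho_1,s^2\rho_2)\in A$. The point is that the extra tangential degree of freedom $(\tau_1,\tau_2)$ is what absorbs the sign obstruction you encountered; pure radial rescaling cannot do it.
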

\vskip0.13in	

\begin{theorem}[Stability]\label{thm:stab}
	Let $\Omega\subset \R^N$  be a smooth bounded domain, $2\leq p+q\le 2^*$ and
$(\rho_1,\rho_2)\in A$. Let
$\bar{\alpha}\ge\lambda_1(\Omega)$ be such that
$$c_{\bar{\alpha}}<\hat{c} _{\bar{\alpha}},\ \
\text{and}\ \ \bar{\alpha}\ \text{satisfies}\  \eqref{eq:compact_intro}\ \ \text{if}\ \
p+q=2^*.$$
Then the set of local ground states
\begin{equation}\label{eqGa}
G_{\bar{\alpha}}:=\left\{(u_1,u_2)\in H^1_0(\Omega;\mathbb C^2):\
(|u_1|,|u_2|)\in\mathcal{B} _{\bar{\alpha}},\
\mathcal{E}(u_1,u_2)=c_{\bar{\alpha}}\right\},
\end{equation}
is orbitally stable.
\end{theorem}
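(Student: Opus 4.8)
The plan is to follow the classical scheme for orbital stability of constrained \emph{local} minimizers developed by Noris, Tavares and Verzini \cite{Nonlinearity}, combining the conservation laws of \eqref{eq:system_schro} with the compactness of minimizing sequences provided by Theorem \ref{prop:compact_intro} and, crucially, with the strict energy gap $c_{\bar\alpha}<\hat c_{\bar\alpha}$. Recall that orbital stability of $G_{\bar\alpha}$ means that for every $\e>0$ there is $\delta>0$ such that, whenever the initial datum $\psi_0=(\Psi_{1,0},\Psi_{2,0})$ lies within $H^1$-distance $\delta$ of $G_{\bar\alpha}$, the corresponding solution $\psi(t)=(\Psi_1(t),\Psi_2(t))$ of \eqref{eq:system_schro} satisfies $\mathrm{dist}_{H^1}(\psi(t),G_{\bar\alpha})<\e$ for all $t>0$. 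As preliminary ingredients I would use local well-posedness of \eqref{eq:system_schro} in $H^1_0(\Omega;\mathbb C^2)$ and the conservation along the flow of both the masses $\mathcal Q(\Psi_i)$ and the energy $\mathcal E$; global existence then follows a posteriori from the trapping below, which keeps the $H^1$-norm bounded.

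The heart of the argument, and the feature that distinguishes local from global minimizers, is a \emph{trapping} estimate resting on $c_{\bar\alpha}<\hat c_{\bar\alpha}$ and the diamagnetic inequality $|\nabla|u||\le|\nabla u|$. The latter gives $\mathcal E(\Psi_1,\Psi_2)\ge\mathcal E(|\Psi_1|,|\Psi_2|)$ for every complex pair, so any $\psi$ whose modulus lies on the sphere $\mathcal U_{\bar\alpha}$ has $\mathcal E(\psi)\ge\hat c_{\bar\alpha}$. Now if $\psi_0$ is close to $G_{\bar\alpha}$, then by continuity its masses are close to $\rho_i$, its energy is close to $c_{\bar\alpha}$, and $(|\Psi_{1,0}|,|\Psi_{2,0}|)$ sits strictly inside the ball of $\mathcal B_{\bar\alpha}$ (interiority of the minimizer being exactly what $c_{\bar\alpha}<\hat c_{\bar\alpha}$ encodes). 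Since the energy is conserved and stays below $\hat c_{\bar\alpha}$, while $t\mapsto\|\nabla\Psi_i(t)\|_2$ is continuous, the trajectory can never reach $\mathcal U_{\bar\alpha}$: at a hypothetical first crossing the conserved energy would be forced to be $\ge\hat c_{\bar\alpha}$, a contradiction. Hence $(|\Psi_1(t)|,|\Psi_2(t)|)$ remains inside $\mathcal B_{\bar\alpha}$ for all time, yielding a uniform $H^1$ bound. The small discrepancy between $\mathcal Q(\Psi_{i,0})$ and $\rho_i$ is absorbed by the continuity of $c_\alpha$ and $\hat c_\alpha$ in the masses and a final renormalization $\Psi_i\mapsto\sqrt{\rho_i/\mathcal{Q}(\Psi_i)}\,\Psi_i$, whose factor tends to $1$.

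With the trapping established I would argue by contradiction. If $G_{\bar\alpha}$ were not orbitally stable there would be $\e_0>0$, data $\psi_0^n\to G_{\bar\alpha}$ in $H^1$, and times $t_n$ with $\mathrm{dist}_{H^1}(\psi^n(t_n),G_{\bar\alpha})\ge\e_0$. By conservation and continuity, $\mathcal E(\psi^n(t_n))=\mathcal E(\psi_0^n)\to c_{\bar\alpha}$, the masses equal $\rho_i$ after the above renormalization, and $(|\Psi_1^n(t_n)|,|\Psi_2^n(t_n)|)\in\mathcal B_{\bar\alpha}$ by the trapping; thus $\psi^n(t_n)$ is a minimizing sequence for $c_{\bar\alpha}$. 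Since $\mathcal E(\psi^n)\ge\mathcal E(|\Psi_1^n|,|\Psi_2^n|)\ge c_{\bar\alpha}$, the real pair $(|\Psi_1^n|,|\Psi_2^n|)$ is minimizing too, so Theorem \ref{prop:compact_intro} (in the critical case, and weak lower semicontinuity together with the compact embedding in the subcritical case) gives strong $H^1$-convergence to a real minimizer $(w_1,w_2)\in G_{\bar\alpha}$. Finally, the energy gap $\mathcal E(\psi^n)-\mathcal E(|\Psi_1^n|,|\Psi_2^n|)=\tfrac12\sum_i(\|\nabla\Psi_i^n\|_2^2-\|\nabla|\Psi_i^n|\|_2^2)\to0$ forces the gradient norms of $\Psi_i^n$ and $|\Psi_i^n|$ to match in the limit, and the equality case of the diamagnetic inequality (with $w_i>0$) upgrades the convergence to $\Psi_i^n\to e^{i\theta_i}w_i$ strongly in $H^1$, i.e. $\psi^n(t_n)\to\psi^*\in G_{\bar\alpha}$. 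This contradicts $\mathrm{dist}_{H^1}(\psi^n(t_n),G_{\bar\alpha})\ge\e_0$ and proves the theorem.

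I expect the main obstacle to be the Sobolev-critical regime $p+q=2^*$: there the compactness that closes the contradiction is genuinely delicate and available only through the mass/energy threshold \eqref{eq:compact_intro} built into Theorem \ref{prop:compact_intro}, and one must moreover secure local well-posedness and the conservation laws despite the lack of smoothness of $u\log u^2$ and the critical power. A secondary delicate point is the recovery of the phases from the diamagnetic equality case, which requires that the limit keep the prescribed masses $\rho_i$ and remain in the interior of $\mathcal B_{\bar\alpha}$ rather than slipping onto $\mathcal U_{\bar\alpha}$.
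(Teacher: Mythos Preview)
Your proposal is essentially correct and follows the same scheme as the paper (itself patterned on \cite{Nonlinearity}): conservation laws, a trapping argument based on the energy gap, compactness of minimizing sequences (via Theorem~\ref{prop:compact_intro} in the critical case), and a contradiction.

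There is one technical difference worth flagging. You run the trapping by tracking the \emph{moduli}, claiming that if $(|\Psi_1(t)|,|\Psi_2(t)|)$ hit $\mathcal U_{\bar\alpha}$ then $\mathcal E(\Psi(t))\ge\hat c_{\bar\alpha}$ by diamagnetic; this is correct, but it relies on the continuity of $t\mapsto\|\nabla|\Psi_i(t)|\|_2$, i.e.\ continuity of $u\mapsto|u|$ on $H^1_0$, which you should justify. The paper sidesteps this by tracking the \emph{complex} gradient $t\mapsto\|\nabla\Psi_i(t)\|_2$ instead (continuous by well-posedness), and for that purpose introduces the auxiliary level
\[
\tilde c_{\bar\alpha}:=\inf\{\mathcal E(u_1,u_2):(u_1,u_2)\in H^1_0(\Omega;\mathbb C^2),\ (|u_1|,|u_2|)\in\mathcal U_{\bar\alpha}\},
\]
proves $c_{\bar\alpha}<\tilde c_{\bar\alpha}$ from $c_{\bar\alpha}<\hat c_{\bar\alpha}$ (Lemma~\ref{lem:c_tilde}), and then argues that a crossing of the complex gradient sphere would, after passing to the limit via the compactness lemma, produce a minimizer in $G_{\bar\alpha}$ whose complex gradient lies on the sphere, contradicting $c_{\bar\alpha}<\tilde c_{\bar\alpha}$. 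Likewise, the paper proves compactness directly for complex-valued minimizing sequences (Lemma~\ref{lem:stability_compact}), avoiding your final ``recovery of phases'' step; instead, the structure $u_i=e^{i\theta_i}|u_i|$ for elements of $G_{\bar\alpha}$ is recorded separately (Lemma~\ref{lemma:c_c'}). Your route is a bit more direct once the $H^1$-continuity of $u\mapsto|u|$ is in hand; the paper's route trades that for the extra lemma $c_{\bar\alpha}<\tilde c_{\bar\alpha}$.
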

\vskip0.13in
\begin{remark}
The definition of orbital stability is given in Section \ref{sec4}. See \cite{Nonlinearity} and its
references for more orbital stability results of solutions to nonlinear
Schr\"{o}dinger equations or systems. Actually, Theorem \ref{thm:stab} can be viewed as an extension
of the result in \cite{APDE,Nonlinearity,CVPDE} for the Schr\"{o}dinger system to the logarithmic case.
\end{remark}
\vskip0.13in	
\begin{theorem}[Non-existence]\label{thm:5}
Let $\Omega\subset\R^N$ be a smooth bounded domain. If $\Omega$ is star shapped with respect to 
$0\in\R^N$ and $p+q<2$, then system \eqref{eq:1} has no solutions, except the trivial one.
\end{theorem}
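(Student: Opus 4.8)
The plan is to exploit the star-shapedness of $\Omega$ through a Rellich--Pohozaev identity, which is the natural device that converts the geometric hypothesis $x\cdot\nu\ge 0$ on $\partial\Omega$ into an analytic inequality. First I would record the two scalar identities that every weak solution $(u_1,u_2)$ of \eqref{eq:1} must satisfy. Testing the $i$-th equation with $u_i$ and adding yields the Nehari-type relation
\[
\int_\Omega\big(|\nabla u_1|^2+|\nabla u_2|^2\big)\,dx+\omega_1\rho_1+\omega_2\rho_2
=\sum_{i=1}^2\mu_i\int_\Omega u_i^2\log u_i^2\,dx+2\int_\Omega|u_1|^p|u_2|^q\,dx ,
\]
while testing with the dilation field $x\cdot\nabla u_i$ and adding yields the Pohozaev identity
\[
\frac{N-2}{2}\int_\Omega\big(|\nabla u_1|^2+|\nabla u_2|^2\big)\,dx-N\int_\Omega G(u_1,u_2)\,dx
=-\frac12\int_{\partial\Omega}\big(|\partial_\nu u_1|^2+|\partial_\nu u_2|^2\big)(x\cdot\nu)\,d\sigma ,
\]
where $G$ is the potential whose gradient in $(u_1,u_2)$ is the nonlinear right-hand side of \eqref{eq:1} and $\nu$ is the outward unit normal.

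Because the logarithmic nonlinearity is not $C^1$, I would first justify these identities rigorously. On a smooth bounded domain, elliptic regularity away from the zero set (together with the integrability $u_i^2\log u_i^2\in L^1(\Omega)$ and the continuity of $t\mapsto t\log t$ under the convention at $0$) gives enough regularity for the dilation test function to be admissible and for the boundary integral to make sense; the potentially singular contributions of $u_i^2\log u_i^2$ near $\{u_i=0\}$ are harmless since $t\log t\to 0$ as $t\to 0$. The geometric hypothesis enters only here: star-shapedness with respect to $0$ means $x\cdot\nu\ge0$ on $\partial\Omega$, so the boundary term above is $\le 0$.

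Next I would eliminate the two unknown Lagrange multipliers $\omega_i$ and the sign-indefinite logarithmic integrals simultaneously. Forming the combination (Pohozaev) minus $\tfrac N2$ (Nehari) cancels both $\omega_1\rho_1+\omega_2\rho_2$ and $\sum_i\mu_i\int_\Omega u_i^2\log u_i^2\,dx$, and leaves a clean relation of the schematic form
\[
\int_\Omega\big(|\nabla u_1|^2+|\nabla u_2|^2\big)\,dx
=\frac N2(\mu_1\rho_1+\mu_2\rho_2)+N\,\frac{p+q-2}{p+q}\int_\Omega|u_1|^p|u_2|^q\,dx+B ,
\]
with $B\ge 0$ the nonnegative boundary term. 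The point of the hypothesis $p+q<2$ is that the coupling coefficient $\tfrac{p+q-2}{p+q}$ is then negative, so the subquadratic coupling contributes with the sign opposite to the gradient and mass terms; the goal is to read this, together with $B\ge0$ and $\rho_i>0$, as forcing $(u_1,u_2)\equiv(0,0)$.

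I expect the sign analysis in this last step to be the main obstacle. Since the system carries two undetermined multipliers, the Nehari and Pohozaev identities produce only a single multiplier-free relation, and the logarithmic integrals are of indefinite sign; consequently the displayed relation, taken in isolation, is only a balance between nonnegative quantities rather than an immediate contradiction. To actually force triviality one must supplement it -- for instance by exploiting the additional sign information carried by the mass constraint and by the variational origin of $(u_1,u_2)$, or by sharpening the combination so that the subquadratic homogeneity $p+q<2$ collides decisively with $x\cdot\nu\ge0$. Making this sign clash conclusive, while rigorously controlling $u^2\log u^2$ near the nodal set in the Pohozaev derivation, is the delicate heart of the proof.
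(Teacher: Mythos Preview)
Your strategy is exactly the paper's: derive the Nehari identity by testing with $(u_1,u_2)$, derive the Pohozaev identity by testing with $(x\cdot\nabla u_1,x\cdot\nabla u_2)$, take the combination $\tfrac{N}{2}(\text{Nehari})-(\text{Pohozaev})$ so that the $\omega_i$ contributions and the logarithmic integrals drop out, and then use $x\cdot\nu\ge 0$ on $\partial\Omega$ to control the boundary term.

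Where you and the paper diverge is in the outcome of that combination. The paper records the Nehari relation as
\[
\int_\Omega\!\big(|\nabla u_1|^2+|\nabla u_2|^2+\omega_1u_1^2+\omega_2u_2^2\big)
=\int_\Omega\!\big(\mu_1 u_1^2(\log u_1^2-1)+\mu_2 u_2^2(\log u_2^2-1)\big)+2\int_\Omega|u_1|^p|u_2|^q,
\]
with $(\log u_i^2-1)$ rather than $\log u_i^2$ on the right. With that form the logarithmic pieces in Nehari and Pohozaev match \emph{exactly}, the combination collapses to
\[
\int_\Omega\big(|\nabla u_1|^2+|\nabla u_2|^2\big)=N\frac{p+q-2}{p+q}\int_\Omega|u_1|^p|u_2|^q,
\]
and nonexistence for $p+q<2$ is immediate. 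Your computation, by contrast, uses the Nehari relation with $\mu_i u_i^2\log u_i^2$ (which is what one actually obtains by multiplying the $i$-th equation by $u_i$), and this leaves the residual mass term $\tfrac{N}{2}(\mu_1\rho_1+\mu_2\rho_2)$ that you display. The obstacle you then describe---that this positive term, together with $B\ge 0$, prevents the sign argument from closing---is precisely the effect of that discrepancy.

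In short: the approach matches the paper's, your combined identity is the correct one, and the difficulty you flag in the final step is genuine. The paper's clean conclusion relies on a Nehari relation in which the logarithmic term appears as $u_i^2(\log u_i^2-1)$; once one writes it as $u_i^2\log u_i^2$, the extra $\tfrac{N}{2}\sum_i\mu_i\rho_i$ survives and the proof, as written, does not close without additional input. Your instinct that ``making this sign clash conclusive'' is the delicate point is well founded.
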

\vskip0.13in	
\begin{remark}
Comparing the results of \cite{deng} and \cite{mma}, we provide a unified approach to study the
existence of positive solutions to the Br\'{e}zis-Nirenberg problem with logarithmic perturbation. Different from \cite{deng} and \cite{mma}, we not only prove the
existence of the normalized ground state
solutions, but also consider its orbital stability of the normalized solutions.
Furthermore, our results are also new even for the single logarithmic
 Schr\"{o}dinger
equation. Therefore, our results improve some previous work.
It is worth mentioning that we applied a different approach 
from the previous ones to study the Br\'{e}zis-Nirenberg problem. 
Inspired by \cite{Nonlinearity}, we do not rely here on the scaling
transformation on the manifold and the mountain path theorem.
\end{remark}

Now, we consider the behavior of solutions for subcritical problem on $\Omega=\R^N(N\geq3)$:
\begin{equation}\label{eq:106}
\left\{\aligned\ &\ -\Delta u_1+\omega_1u_1=\mu_1 u_1\log u_1^2+\frac{2p}{p+q}|u_2|^{q}|u_1|^{p-2}u_1,\\
\ &\ -\Delta u_2+\omega_2u_2=\mu_2 u_2\log u_2^2+\frac{2q}{p+q}|u_1|^{p}|u_2|^{q-2}u_2,\\
\ &\ (u_1,u_2)\in\mathcal{H} (\R^N),\ \ \omega_i\in\R,\ \mu_i>0,\ \ i=1,2.\endaligned\right.
\end{equation}
Moreover, by studying the behavior of system \eqref{eq:1} when $p+q$ tends to $2^*$, we
give a way to prove the existence of normalized solutions to the
Br\'{e}zis-Nirenberg problem on $\R^N(N\geq4)$.

\begin{theorem}[Behavior of solutions]\label{thm:6}
Assume $N\geq 3,\ p,\ q>1,\ 2<p+q<2^*$. System \eqref{eq:106} has a ground state solution
$(u_1,u_2)\in\mathcal{H}(\R^N)$ with
$$\mathcal{I}_{p+q}(u_1,u_2)=c_{p+q}:=\inf_{\mathcal{N}_{p+q}}\mathcal{I}_{p+q}(u_1,u_2),$$
where
$$ \mathcal{N}_{p+q}:=\{ (u_1,u_2)\in \mathcal{H}(\R^N)\backslash \{(0,0)\},\ \
\mathcal{J}_{p+q}(u_1,u_2)=0\},$$
$$\aligned\mathcal{I}_{p+q}(u_1,u_2):&=
\frac{1}{2}\int_{\R^N}(|\nabla u_1|^2+|\nabla u_2|^2+\omega_1 u_1^2+\omega _2 u_2^2-\mu_1 u_1^2(\log u_1^2-1)-\mu_2 u_2^2(\log u_2^2-1))\\
&\ \ \ \ -\frac{2}{p+q}\int_{\R^N}|u_1|^{p}|u_2|^{q},
\endaligned$$
and
$$\aligned\mathcal{J}_{p+q}(u_1,u_2):&=\int_{\R^N}\left(|\nabla u_1|^2+|\nabla u_2|^2
+\omega_1 u_1^2+\omega_2 u_2^2
-\mu_1 u_1^2\log u_1^2-2|u_1|^{p}|u_2|^{q}-\mu_2 u_2^2\log u_2^2\right).
\endaligned$$
For $N\geq 4$, let $p+q$ approaches to $2^*$, system
\begin{equation}\label{eq:1071}
\left\{\aligned\ &\ -\Delta u_1+\omega_1u_1
=\mu_1 u_1\log u_1^2+\frac{2p}{2^*}|u_2|^{q}|u_1|^{p-2}u_1,\\
\ &\ -\Delta u_2+\omega_2u_2
=\mu_2 u_2\log u_2^2+\frac{2q}{2^*}|u_1|^{p}|u_2|^{q-2}u_2,\\
\ &\ (u_1,u_2)\in  \mathcal{H} (\R^N),\ \ \omega_i\in \R,\ \mu_i>0,\ \ i=1,2,\\
\ &\ p,\ q>1,\ \ p+q=2^*,\endaligned\right.
\end{equation}
has a ground state solution $(u_1,u_2)\in\mathcal{H}(\R^N)$ with
$$\mathcal{I}_{2^*}(u_1,u_2)=c_{2^*}=\inf_{\mathcal{N}_{2^*}}\mathcal{I}_{2^*}(u_1,u_2).$$
\end{theorem}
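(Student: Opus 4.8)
Throughout, write for brevity $A=\int_{\R^N}(|\nabla u_1|^2+|\nabla u_2|^2)$, $B=\int_{\R^N}(\omega_1u_1^2+\omega_2u_2^2)$, $L=\int_{\R^N}(\mu_1u_1^2\log u_1^2+\mu_2u_2^2\log u_2^2)$, $M=\int_{\R^N}(\mu_1u_1^2+\mu_2u_2^2)$ and $C=\int_{\R^N}|u_1|^p|u_2|^q$. A direct computation of the Gateaux derivative gives $\mathcal{J}_{p+q}(u_1,u_2)=\langle\mathcal{I}_{p+q}'(u_1,u_2),(u_1,u_2)\rangle$, so $\mathcal{N}_{p+q}$ is the Nehari manifold of $\mathcal{I}_{p+q}$ and every point of $\mathcal{N}_{p+q}$ at which $\mathcal{I}_{p+q}$ is constrained-critical is a candidate ground state. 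The plan for the first assertion is to minimize $\mathcal{I}_{p+q}$ over $\mathcal{N}_{p+q}$. First I would carry out the fibering analysis: using the scaling identity $\int(tu)^2\log(tu)^2=t^2\log t^2\int u^2+t^2\int u^2\log u^2$ one finds $\mathcal{J}_{p+q}(tu_1,tu_2)=t^2 g(t)$ with $g(t)=(A+B-L)-M\log t^2-2Ct^{p+q-2}$; since $M>0$ and $p+q>2$, the function $g$ decreases strictly from $+\infty$ (as $t\to0^+$, forced by $-M\log t^2$) to $-\infty$ (as $t\to\infty$, forced by the coupling term), so there is a unique $t(u)>0$ landing on $\mathcal{N}_{p+q}$ and $\mathcal{N}_{p+q}\neq\varnothing$. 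Moreover $g'(1)<0$ records the nondegeneracy $\langle\mathcal{J}_{p+q}'(u),u\rangle<0$ on $\mathcal{N}_{p+q}$, which will later force the Lagrange multiplier to vanish.

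Using $\mathcal{J}_{p+q}=0$ I would reduce the energy on the manifold to the clean form $\mathcal{I}_{p+q}|_{\mathcal{N}_{p+q}}=\frac{p+q-2}{p+q}C+\frac12 M$, which is nonnegative and vanishes only at the trivial pair. Boundedness of a minimizing sequence is where the logarithmic Sobolev inequality \eqref{eq:5} is essential: it bounds $L$ from above by $\sum_i\frac{\mu_i a^2}{\pi}\|\nabla u_i\|_2^2$ plus $L^2$-terms, and choosing $a$ small lets me absorb the gradient into $A$ in the Nehari identity $A=2C+L-B$; since the reduced energy bounds both $C$ and $M$ (hence $\|(u_1,u_2)\|_{L^2}$), this yields boundedness of $A$ and of $(u_1,u_2)$ in $\mathcal{H}(\R^N)$ for every real $\omega_i$. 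The subcritical Gagliardo-Nirenberg inequality \eqref{1.12} then gives a positive lower bound $c_{p+q}>0$.

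The second and more delicate step is compactness on $\R^N$. Since the problem is translation invariant, I would apply a Lions-type concentration lemma to a minimizing sequence: vanishing is excluded because it forces $C\to0$, after which the Nehari identity together with \eqref{1.12} and \eqref{eq:5} drives the whole $\mathcal{H}$-norm to zero, contradicting $c_{p+q}>0$. After a suitable translation the sequence converges weakly to a nontrivial limit $(u_1,u_2)$. Passing to the limit in the logarithmic term is the genuinely nonstandard point and relies on the sign splitting $u^2\log u^2=u^2\log u^2\,\chi_{\{|u|\le1\}}+u^2\log u^2\,\chi_{\{|u|>1\}}$: the first piece, being bounded and uniformly integrable, passes to the limit by a.e.\ convergence, while the second is nonnegative and weakly lower semicontinuous. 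A Brezis-Lieb analysis of all nonlinear terms then rules out dichotomy, upgrades the convergence to strong, and shows the limit attains $c_{p+q}$ on $\mathcal{N}_{p+q}$. That this constrained minimizer solves \eqref{eq:106} follows from the nondegeneracy $\langle\mathcal{J}_{p+q}'(u),u\rangle<0$, which forces the multiplier of the constraint to vanish; replacing $(u_1,u_2)$ by $(|u_1|,|u_2|)$ and invoking the strong maximum principle yields a positive ground state.

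For the limiting statement I would take $p_n+q_n\uparrow 2^*$ with $p_n\to p$, $q_n\to q$, $p_n,q_n>1$, and let $(u_1^n,u_2^n)$ be the subcritical ground states just produced. Testing $\mathcal{I}_{p_n+q_n}$ on a fixed admissible pair bounds the levels $c_{p_n+q_n}$ from above and, via the argument above, bounds $(u_1^n,u_2^n)$ uniformly in $\mathcal{H}(\R^N)$, so a weak limit $(u_1,u_2)$ exists. The crux—and the step I expect to be the main obstacle—is to show this limit is nontrivial and that no Sobolev mass escapes into a critical bubble: this is a Brezis-Nirenberg threshold estimate requiring the limiting level to stay strictly below the value governed by $S_N$ in \eqref{eq:Sobolev_constant}. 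The logarithmic perturbation $\mu_iu_i^2\log u_i^2$ contributes the lower-order terms in the expansion of the energy on rescaled Aubin-Talenti extremals that push the level below the threshold, and estimating their sign and size precisely is exactly what forces the restriction $N\ge4$ (the $N=3$ expansion carries an additional borderline term). Once the strict inequality holds, the standard Brezis-Nirenberg compactness upgrades the weak convergence in $\mathcal{D}^{1,2}(\R^N)$ to strong, and the limit is a nontrivial ground state of \eqref{eq:1071} with $\mathcal{I}_{2^*}(u_1,u_2)=c_{2^*}$.
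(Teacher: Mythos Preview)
Your overall strategy---fibering to hit the Nehari set, the reduced energy $\tfrac{p+q-2}{p+q}C+\tfrac12 M$, boundedness via the logarithmic Sobolev inequality, Lions to rule out vanishing, Br\'ezis--Lieb to close, and then the Br\'ezis--Nirenberg threshold with Aubin--Talenti test functions to handle $p+q\to 2^*$---is exactly the route the paper takes in Section~\ref{sec:segregation6} (Lemmas~\ref{le62}--\ref{64}). The identification of the $N\ge4$ restriction as coming from the expansion of the log term on the Talenti bubbles is also correct.

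There is one genuine gap. You write that ``this constrained minimizer solves \eqref{eq:106} \dots\ the nondegeneracy $\langle\mathcal{J}_{p+q}'(u),u\rangle<0$ forces the multiplier of the constraint to vanish.'' This Lagrange-multiplier step presupposes that $\mathcal{I}_{p+q}$ and $\mathcal{J}_{p+q}$ are $C^1$ on $\mathcal{H}(\R^N)$, so that $\mathcal{N}_{p+q}$ is a $C^1$ submanifold and the constrained critical point calculus applies. But the logarithmic term destroys Fr\'echet differentiability of $\mathcal{I}_{p+q}$ on $H^1$ (only Gateaux derivatives in the direction of test functions are available, cf.~\eqref{eq37-5}); the paper stresses this explicitly (see Remark~1.2), and in particular $\mathcal{N}_{p+q}$ is \emph{not} a manifold in the usual sense. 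The paper therefore does \emph{not} use the multiplier argument: instead, once the minimum $c_{p+q}$ is achieved at $(\tilde u_1,\tilde u_2)\in\mathcal{N}_{p+q}$, it argues by contradiction via a direct deformation (the method of \cite[Lemma~2.5]{lww}): if $\langle\mathcal{I}_{p+q}'(\tilde u_1,\tilde u_2),(\phi_1,\phi_2)\rangle<-1$ for some $(\phi_1,\phi_2)\in C_0^\infty$, one builds a continuous curve $t\mapsto(\gamma_1(t),\gamma_2(t))$ perturbing $t(\tilde u_1,\tilde u_2)$ by $\varepsilon\zeta(t)(\phi_1,\phi_2)$ near $t=1$, checks that $\sup_t\mathcal{I}_{p+q}(\gamma_1(t),\gamma_2(t))<c_{p+q}$, and then uses the sign change of $t\mapsto\mathcal{J}_{p+q}(\gamma_1(t),\gamma_2(t))$ to find a point of $\mathcal{N}_{p+q}$ with energy below $c_{p+q}$, a contradiction. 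You should replace your multiplier sentence by this deformation argument; without it the passage from ``minimizer on $\mathcal{N}_{p+q}$'' to ``weak solution of \eqref{eq:106}'' is not justified.

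A minor remark on the critical limit: the paper does not actually upgrade to strong convergence. It shows $\limsup_{p+q\to2^*}c_{p+q}\le c_{2^*}$ by testing with a fixed pair close to optimal for $c_{2^*}$, uses Lemma~\ref{63} (energy below $\tfrac1N S^{N/2}$) to get a nontrivial weak limit, and then concludes $\mathcal{I}_{2^*}(u_1,u_2)\le\liminf\mathcal{I}_{p_n+q_n}(u_{1,n},u_{2,n})\le c_{2^*}\le\mathcal{I}_{2^*}(u_1,u_2)$ by Fatou-type lower semicontinuity of the reduced energy. Your sketch is compatible with this, but ``upgrades to strong'' overstates what is needed and what is proved.
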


\begin{remark}
When we prove Theorem 
\ref{thm:6}, we only use 
the Nehari constraint set. Note that it is not a manifold, because 
the existence of the
 logarithmic term causes the functional $\mathcal{I}$ losses $C^1$ 
 smooth on $\mathcal{H}(\mathbb R^N)$, but we can first prove 
 that the minimum value of $\mathcal{I}_{p+q}|_{\mathcal{N}_{p+q}}$ is reachable,
and then obtain that the minimum of 
$\mathcal{I}_{p+q}|_{\mathcal{N}_{p+q}}$ is a 
solution by the method in \cite[Lemma 2.5]{lww}. On the other hand, 
the above method is not available when working in 
bounded domains. Moreover, the gain of compactness is lost
when we face the Sobolev critical case.
\end{remark}

The paper is structured as follows. In the next (i.e., Section \ref{sec2}) we make some preliminary
remarks and notations which will be used in the text. Section \ref{sec3}
is devoted to the existence results, i.e., to the proof of Theorems
 \ref{prop:compact_intro}-\ref{prop:supercritical}. The stability results,
namely the proof of Theorem \ref{thm:stab} is proved in Section \ref{sec4}.
Theorem \ref{thm:5} is proved in section \ref{sec5}. Finally, we investigate the behavior of
solutions
for problem \eqref{eq:1} when $p+q$ tends to $2^*$ and complete the proof of Theorem \ref{thm:6} in
section \ref{sec:segregation6}.

\section{Notations and Preliminary results}\label{sec2}

Throughout the paper, we do not distinguish between bounded domain $\Omega$ and $\Omega=\R^N$, unless otherwise specified.
We denote $\lambda_1(\Omega)$ as the first eigenvalue of the Dirichlet Laplacian in $\Omega$, and by
$\varphi_1$ the corresponding first eigenfunction, which we assume
normalized in $L^2(\Omega)$ and positive in $\Omega$. $|\Omega|$ denotes the measure of $\Omega$. For
convenience, $C$ and $C_i (i = 1, 2,\ldots)$ denote (possibly different) positive
constants. $B_R(x)$ is a ball with radius $R$ centered on $x$. $\int_\Omega g(z)$ denotes the integral
$\int_\Omega g(z)dz$. The $ \rightarrow$ and $\rightharpoonup$ denote strong
convergence and weak convergence, respectively.
We use the following $L^q(\Omega)$ ($1\le q<\infty$), $H^1_0(\Omega)$  and $H^1(\R^N)$-norms:
$$|u|_{L^q(\Omega)}^q:=\int_\Omega|u|^q, \ \  \|u\|_{H^1_0(\Omega)}^2:
=\int_\Omega|\nabla u|^2,\ \ \|u\|_{H^1(\R^N)}^2:=\int_{\R^N}(|\nabla u|^2+|u|^2).$$
Where there is no risk of confusion, we will denote
$|\cdot|_{L^q(\Omega)}$ simply by $|\cdot|_q$.

We shall make frequent use of the following lemma in which we summarize some
properties of the function $s^2\log s^2$.

\begin{lemma}\label{lemma:1}
The function $s^2\log s^2$ satisfies the following properties:

$(i)$ $|s^2\log s^2|\leq\frac{1}{e},\  0\leq s\leq1;$

$(ii)$ $0<s^2\log s^2\leq\frac{(N-2)}{2e}s^{2^*},\ s>1$.
\end{lemma}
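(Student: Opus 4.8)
The plan is to treat both parts by elementary one-variable calculus, reducing everything to the single inequality $\log y \le y/e$ valid for all $y>0$ (with equality at $y=e$). This master inequality is itself proved by minimizing $h(y):=y-e\log y$: since $h'(y)=1-e/y$ vanishes only at $y=e$ with $h(e)=0$ as a global minimum, one gets $y\ge e\log y$, i.e. $\log y\le y/e$, for every $y>0$.

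For part $(i)$, I would substitute $t=s^2\in[0,1]$, so that $s^2\log s^2=t\log t$. On $(0,1]$ the function $\phi(t)=t\log t$ is nonpositive (because $\log t\le 0$ there), vanishes at the endpoints $t\to 0^+$ and $t=1$ (using the stated convention $0\log 0=0$ at $s=0$), and satisfies $\phi'(t)=\log t+1$, which vanishes only at $t=1/e$. Evaluating gives $\phi(1/e)=-1/e$, the global minimum on $[0,1]$; hence $-\tfrac1e\le s^2\log s^2\le 0$, that is $|s^2\log s^2|\le \tfrac1e$.

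For part $(ii)$, the lower bound is immediate: for $s>1$ we have $s^2>1$, so $\log s^2>0$ and therefore $s^2\log s^2>0$. For the upper bound I would first record the exponent identities $2^*=\tfrac{2N}{N-2}$ and $2^*-2=\tfrac{4}{N-2}$, and then apply the master inequality with $y=s^{4/(N-2)}=(s^2)^{2/(N-2)}$. This yields $\tfrac{2}{N-2}\log s^2=\log s^{4/(N-2)}\le \tfrac1e\,s^{4/(N-2)}$, and multiplying through by $\tfrac{N-2}{2}s^2>0$ gives $s^2\log s^2\le \tfrac{N-2}{2e}\,s^{2+4/(N-2)}=\tfrac{N-2}{2e}\,s^{2^*}$, as claimed.

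\textbf{Main obstacle:} honestly there is none of substance here, since both estimates are sharp and attained; the only care required is the sign bookkeeping at the endpoints in $(i)$ (including the convention at $s=0$) and the exponent arithmetic relating $2^*$ to $N$ in $(ii)$. As a consistency check I would verify the constant $\tfrac{N-2}{2e}$ through the equality case of $\log y\le y/e$, which occurs at $y=e$, i.e. at $s=e^{(N-2)/4}$.
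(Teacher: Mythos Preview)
Your proof is correct and follows essentially the same route as the paper: both arguments reduce each inequality to locating the extremum of a single-variable function by elementary calculus. For (i) the paper squares first and optimizes $f(s)=(s^2\log s^2)^2$, while you work directly with $t\log t$; for (ii) the paper maximizes $g(s)=\log s^2/s^{2^*-2}$, which is exactly your master inequality $\log y\le y/e$ with $y=s^{4/(N-2)}$. Your packaging via the single inequality $\log y\le y/e$ is a bit cleaner, but the underlying computation is identical.
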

\begin{proof}
(i) Define a function in $[0,+\infty)$ by $f(s)=(s^2\log s^2)^2$, by direct computation, we obtain
$$f^\prime(s)=2s^2(2s\log s^2+2s)\log s^2=4s^3(2\log s+1)\log s^2.$$
Let $f^\prime(s)=0$, we get $s=0$ or $s=e^{-\frac 12}$.
It is easy to see that $f>0$ for $0<s<e^{-\frac 12}$ and $f<0$ for $e^{-\frac 12}<s<1$. Hence $f(s)<f(e^{-\frac 12})=e^{-2}$,
that is, for $0\leq s\leq 1$, we obtain $|s^2\log s^2|\leq\frac{1}{e}$.

(ii) Since
$$\lim_{s\to\infty}\frac{s^{2^*}}{s^2\log s^2}=+\infty,$$
there exists a positive constant $\theta (N)$ such that $s^2\log s^2\leq \theta (N)s^{2^*}$ for $s>1.$
Let $\alpha=2^*-2$, we define $g(s):=\frac{\log s^2}{s^\alpha}$, by complete, we obtain $g(s)\leq\frac{(N-2)}{2e}$.
\end{proof}	

Next, we recall that a lemma by Br\'{e}zis and Lieb \cite{BL83}: for $1\leq q<\infty$,  if
$\{g_n\}_n\subset L^q(\Omega)$ is a sequence bounded in $L^q(\Omega)$, such that $g_n\to g$ almost
everywhere, then
\begin{equation}\label{eq:BL}
|g_n|_q^q=|g|_q^q+|g_n-g|_q^q+o(1)\ \ \text{as}\ \ n\to\infty.
\end{equation}

In the following, we give the Br\'{e}zis-Lieb type lemma for $u^2\log u^2$ 
(see \cite[Lemma 2.3]{ar} or \cite[Lemma 3.1]{mma22}) and the coupling
term $|u_1|^p|u_2|^q$ (see \cite[Lemma 3.2]{chenzhang}) respectively.

\begin{lemma}\label{le332}
Let $u_{n}\rightharpoonup u$ in $H_0^1(\Omega)$. If $u_{n}\rightarrow u$ a.e in $\Omega$,
$\{u_n^{2}\log u_n^{2}\}$ is a bounded sequence in $L^1(\Omega)$. Then,
$u^{2}\log u^{2}\in L^1(\Omega)$ and
$$\int_{\Omega}u_{n}^{2}\log u_{n}^{2}=\int_{\Omega}u^{2}\log u^{2}
+\int_{\Omega}|u_{n}-u|^{2}\log|u_{n}-u|^{2}+o(1)\ \ \text{as}\ \ n\to\infty. $$
\end{lemma}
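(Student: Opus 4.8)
```latex
\textbf{Proof proposal for Lemma \ref{le332}.}

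The plan is to reduce this logarithmic Br\'{e}zis--Lieb identity to the classical pointwise splitting for the integrand $F(s):=s^2\log s^2$, combined with a dominated-convergence argument that controls the error terms using the two-sided bounds from Lemma \ref{lemma:1}. The key algebraic fact I would exploit is that the function $F$ is \emph{not} additive, so the difference
$$D_n:=u_n^2\log u_n^2-u^2\log u^2-|u_n-u|^2\log|u_n-u|^2$$
does not vanish pointwise; instead I must show $\int_\Omega D_n\to0$. First I would write $w_n:=u_n-u$, so $u_n=u+w_n$ with $w_n\rightharpoonup0$ and $w_n\to0$ a.e. The strategy is to expand $F(u+w_n)-F(w_n)$ and show that, after subtracting $F(u)$, the remainder is dominated in $L^1$ by a fixed integrable majorant, then pass to the limit via the generalized dominated convergence theorem (or Vitali). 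The uniform $L^1$-bound on $\{u_n^2\log u_n^2\}$ and the weak $H^1_0$ convergence are used to secure both the conclusion $u^2\log u^2\in L^1(\Omega)$ and the equi-integrability needed for the passage to the limit.

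The main estimate I would carry out is a pointwise bound on $D_n$. Splitting the region $\{|u|\le1\}\cup\{|u|>1\}$ and likewise for $w_n$, and using the concavity/convexity behavior of $F$, one derives an inequality of the shape $|D_n|\le \e\, G(w_n)+C_\e\, H(u)$ for suitable $\e>0$, where $G$ has at most $L^1$-controlled growth (bounded below by part (i) and above by the $s^{2^*}$ bound in part (ii)) and $H(u)\in L^1(\Omega)$. The point is that the logarithmic nonlinearity grows slower than any power near infinity and is bounded near the origin, so both $F(u_n)$ and $F(w_n)$ admit majorants of the form $\tfrac1e+\tfrac{N-2}{2e}|\cdot|^{2^*}$ coming directly from Lemma \ref{lemma:1}; the embedding $H^1_0(\Omega)\hookrightarrow L^{2^*}(\Omega)$ then keeps the $2^*$-power terms bounded. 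The cross terms in the expansion of $(u+w_n)^2\log(u+w_n)^2$ are handled by the elementary inequality $|\log(a+b)^2-\log b^2|\le C(1+|a/b|)$ on appropriate regions, which controls the logarithm of the sum by that of the individual pieces.

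The hard part will be the pointwise/$L^1$ domination of the cross terms near the set where $w_n$ is small but $u$ is not (and vice versa), since there the logarithm can be large and changes sign, which is precisely the ``uncertainty of sign of $u\log u^2$'' flagged in the introduction. I would handle this by separating the domain into three regions according to the relative sizes of $|u|$ and $|w_n|$, on each of which one factor dominates and the logarithm of the dominant factor absorbs the other. Once $|D_n|$ is dominated by a fixed $L^1$ function independent of $n$ and $D_n\to0$ a.e.\ (which follows from $w_n\to0$ a.e.\ and continuity of $F$ away from the cancellation, treating the point $s=0$ via the convention $0\cdot\log0=0$), dominated convergence yields $\int_\Omega D_n\to0$, which is exactly the claimed identity. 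Finally, Fatou's lemma applied to the nonnegative part of $u_n^2\log u_n^2$ together with the uniform $L^1$ bound gives $u^2\log u^2\in L^1(\Omega)$, closing the argument. Alternatively, I note this is precisely the statement proved in \cite[Lemma 2.3]{ar} and \cite[Lemma 3.1]{mma22}, so the cleanest route may be to invoke those references directly and only reprove the parts specific to the present domain.
```
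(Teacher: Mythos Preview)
Your proposal heads in the right direction but differs from the paper's proof and, as written, contains a gap. The paper does not attempt to bound the difference $D_n$ directly for the sign-changing, non-convex function $F(s)=s^2\log s^2$. Instead it decomposes $s^2\log s^2 = B(s) - A(s)$, where $A,B$ are explicit functions that are positive, convex, increasing on $(0,\infty)$ with $A(0)=B(0)=0$, and satisfy $A(2s)\le 4A(s)$ and $|B(s)|\le C_\xi|s|^\xi$ for any $\xi\in(2,2^*)$. These conditions place $A$ and $B$ exactly into the framework of \cite[Theorem~2 and example~(b)]{BL83}, so the classical Br\'ezis--Lieb lemma applies to each piece separately and the identity follows by subtraction. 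The decomposition is precisely what sidesteps the non-convexity and sign change of $F$ that you identify as ``the hard part''.

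Your direct route has two weak points. First, the claim that $|D_n|$ is ``dominated by a fixed $L^1$ function independent of $n$'' is false: the term $\varepsilon\,G(w_n)$ in your own estimate depends on $n$. One must instead run the $\varepsilon$-truncation argument on $(|D_n|-\varepsilon G(w_n))^+$, which \emph{is} dominated by $C_\varepsilon H(u)+|F(u)|$, apply DCT, and then let $\varepsilon\to0$ using only the uniform bound on $\int G(w_n)$; you gesture at ``generalized dominated convergence'' earlier but then contradict yourself. Second, you never actually establish the key pointwise inequality $|F(a+b)-F(a)|\le\varepsilon\,\varphi(a)+\psi_\varepsilon(b)$ for this particular $F$; the region-splitting you sketch is not a proof, and the standard verification of this hypothesis in \cite{BL83} relies on convexity, which $F$ lacks. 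The $A$--$B$ splitting restores convexity and makes the verification routine. Your closing remark is correct: the paper indeed follows \cite[Lemma~3.1]{mma22}.
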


\begin{proof}
For the convenience of the reader, we give the details of the proof which
was sketched in \cite[Lemma 3.1]{mma22}.
We define
$$A(s):=\left\{\aligned\ &\ -s^2\log s^2, &\text{if}\ 0\leq s\leq e^{-3},\\
\ &\ 3s^2+4e^{-3}s-e^{-6},&\text{if}\ s\geq e^{-3},\endaligned\right.$$
and $B(s):=s^2\log s^2+A(s).$
Clearly, $A$, $B$ are positive, convex, increasing functions on $(0,+\infty)$ with
$A(0)=0$, $B(0)=0$ and there exists $C_\xi>0$ such that
\begin{equation}\label{eq3.2}
A(2s)\leq 4A(s),\ \ |B(s)|\leq C_\xi|s|^\xi\ \ \text{for any}\ \ \xi\in(2,2^*).
\end{equation}
Since $\{u_n\}$ is bounded in $H_0^1(\Omega)$, we obtain that
$\{B(|u_n|)\}$ is bounded in $L^1(\Omega)$. Therefore, $\{A(|u_n|)\}$ is also
bounded in $L^1(\Omega)$. It follows from Fatou lemma that
$$\int_{\Omega}A(|u|)\leq\liminf_{n\to\infty}\int_{\Omega}A(|u_n|),\ \
\int_{\Omega}B(|u|)\leq\liminf_{n\to\infty}\int_{\Omega}B(|u_n|).$$
Thus,
$$\int_{\Omega}u^2\log u^2=\int_{\Omega}B(|u|)-\int_{\Omega}A(|u|)<+\infty,$$
which implies $u^2\log u^2\in L^1(\Omega)$.
Since $s^2\log s^2=B(s)-A(s)$, we need only apply the Br\'{e}zis-Lieb lemma
to the functions $A$ and $B$.
Obviously, by using \eqref{eq3.2}, according to that $\{A(|u_n|)\}$, $\{B(|u_n|)\}$ are
bounded in $L^1(\Omega)$, we obtain that $A$ and $ B$ satisfy \cite[ Theorem 2 and
example (b)]{BL83}.  Therefore,
$$\lim_{n\to\infty}\int_{\Omega}A(|u_n|)-A(|u_n-u|)-A(|u|)=0,$$
and
$$\lim_{n\to\infty}\int_{\Omega}B(|u_n|)-B(|u_n-u|)-B(|u|)=0.$$
Then, Lemma \ref{le332} follows.
\end{proof}

\begin{lemma}\label{32}
Let  $u_{1,n}\rightharpoonup u_1, u_{2,n}\rightharpoonup u_2$
in $H_0^{1}(\Omega)$,
$u_{1,n}\rightarrow u_1, u_{2,n}\rightarrow u_2$ a.e in $\Omega$. Then
$$\lim_{n\rightarrow\infty}\int_{\Omega}|u_{1,n}|^{p}|u_{2,n}|^{q}
-\int_{\Omega}|u_1|^{p}|u_2|^{q}
=\lim_{n\rightarrow\infty}\int_{\Omega}|u_{1,n}-u_1|^{p}|u_{2,n}-u_2|^{q}.$$
\end{lemma}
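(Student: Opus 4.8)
The plan is to recast the statement as a Br\'{e}zis--Lieb type convergence and then establish it through a truncation and generalized dominated convergence argument. Set $a_n:=u_{1,n}-u_1$ and $b_n:=u_{2,n}-u_2$, so that $u_{1,n}=a_n+u_1$, $u_{2,n}=b_n+u_2$ and $a_n,b_n\to 0$ a.e. in $\Omega$. The claim is equivalent to
$$\lim_{n\to\infty}\int_\Omega f_n=\int_\Omega|u_1|^p|u_2|^q,\qquad f_n:=|u_{1,n}|^p|u_{2,n}|^q-|a_n|^p|b_n|^q,$$
since $\int_\Omega f_n=\int_\Omega|u_{1,n}|^p|u_{2,n}|^q-\int_\Omega|u_{1,n}-u_1|^p|u_{2,n}-u_2|^q$. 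Because $u_{i,n}\rightharpoonup u_i$ in $H_0^1(\Omega)$ and $2\le p+q\le 2^*$, the sequences $\{u_{i,n}\}$ are bounded in $L^{p+q}(\Omega)$ by the Sobolev embedding; hence $\{a_n\},\{b_n\}$ are bounded in $L^{p+q}(\Omega)$, the quantity $M:=\sup_n\int_\Omega(|a_n|^{p+q}+|b_n|^{p+q})$ is finite, and $u_1,u_2\in L^{p+q}(\Omega)$. Since $a_n,b_n\to0$ a.e. and $u_1,u_2$ are finite a.e., one checks at once that $f_n\to|u_1|^p|u_2|^q$ a.e. in $\Omega$.

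The core of the argument is the elementary inequality: for every $\eta>0$ there is $C_\eta>0$, depending only on $p,q,\eta$, such that
$$\big||a+v|^p|b+w|^q-|a|^p|b|^q\big|\le \eta\big(|a|^{p+q}+|b|^{p+q}\big)+C_\eta\big(|v|^{p+q}+|w|^{p+q}\big)$$
for all $a,b,v,w\in\R$. To prove it I split
$$|a+v|^p|b+w|^q-|a|^p|b|^q=(|a+v|^p-|a|^p)|b+w|^q+|a|^p(|b+w|^q-|b|^q),$$
and apply the standard scalar estimates $\big||a+v|^p-|a|^p\big|\le\delta|a|^p+C_\delta|v|^p$ and $\big||b+w|^q-|b|^q\big|\le\delta|b|^q+C_\delta|w|^q$ (valid as $p,q\ge 1$, cf. \cite{BL83}) together with $|b+w|^q\le 2^{q-1}(|b|^q+|w|^q)$. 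This yields the genuine term $|a|^p|b|^q$ and the mixed terms $|a|^p|w|^q$, $|v|^p|b|^q$, $|v|^p|w|^q$, each of which I reduce to pure powers by Young's inequality, e.g. $|a|^p|w|^q\le\sigma|a|^{p+q}+C_\sigma|w|^{p+q}$. Choosing first $\delta$ small and then the Young parameter $\sigma$ small (after $C_\delta$ has been fixed) forces the total coefficient of $|a|^{p+q}+|b|^{p+q}$ below $\eta$, while all remaining contributions are absorbed into the $|v|^{p+q}+|w|^{p+q}$ term.

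Granting this inequality, take $a=a_n$, $b=b_n$, $v=u_1$, $w=u_2$ to get $|f_n|\le\eta(|a_n|^{p+q}+|b_n|^{p+q})+C_\eta(|u_1|^{p+q}+|u_2|^{p+q})$, and define
$$W_{n,\eta}:=\Big(\big|f_n-|u_1|^p|u_2|^q\big|-\eta(|a_n|^{p+q}+|b_n|^{p+q})\Big)^+.$$
Then $0\le W_{n,\eta}\le C_\eta(|u_1|^{p+q}+|u_2|^{p+q})+|u_1|^p|u_2|^q$, a fixed $L^1(\Omega)$ function independent of $n$ (H\"{o}lder's inequality with exponents $\frac{p+q}{p},\frac{p+q}{q}$ shows $|u_1|^p|u_2|^q\in L^1$), and $W_{n,\eta}\to0$ a.e. By dominated convergence $\int_\Omega W_{n,\eta}\to0$. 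Using $\big|f_n-|u_1|^p|u_2|^q\big|\le W_{n,\eta}+\eta(|a_n|^{p+q}+|b_n|^{p+q})$ I obtain $\limsup_n\int_\Omega\big|f_n-|u_1|^p|u_2|^q\big|\le \eta M$, and letting $\eta\to0$ gives $\int_\Omega f_n\to\int_\Omega|u_1|^p|u_2|^q$, which is the assertion.

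The only genuinely delicate point is the elementary inequality, and within it the treatment of the mixed products $|a|^p|w|^q$ and $|v|^p|b|^q$: these block a naive factor-by-factor use of the scalar Br\'{e}zis--Lieb inequality, and one must split them by Young's inequality with a parameter chosen \emph{after} the scalar constant $C_\delta$ is fixed, so that the part loaded onto $|a|^{p+q}$ and $|b|^{p+q}$ (the only quantities that are merely bounded, not vanishing, in $L^1$) carries an arbitrarily small coefficient. Everything else is routine: the $L^{p+q}$-boundedness from the Sobolev embedding, the pointwise convergence of $f_n$, and the standard truncation scheme that substitutes for the direct dominated convergence theorem, which does not apply because no single $L^1$ dominating function for $\{f_n\}$ is available.
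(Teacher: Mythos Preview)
Your argument is correct and complete. The paper itself does not give a proof of this lemma: it simply writes ``The proof can be proved by the same argument as that of \cite[Lemma 3.2]{chenzhang}, so we omit the details.'' Your proof supplies precisely those details, and the route you take---recasting the claim as $\int_\Omega f_n\to\int_\Omega|u_1|^p|u_2|^q$, establishing the pointwise $\eta$-inequality for the mixed product, and running the standard truncation/dominated-convergence scheme---is the canonical Br\'ezis--Lieb argument adapted to a two-factor nonlinearity, which is exactly what the cited reference does. Your observation that the mixed terms $|a|^p|w|^q$ and $|v|^p|b|^q$ must be split by Young's inequality with a parameter chosen \emph{after} $C_\delta$ is fixed is the one genuinely non-automatic step, and you handle it correctly.
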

\begin{proof}
The proof can be proved by the same argument as that of \cite[Lemma 3.2]{chenzhang}, so we omit the details.
\end{proof}
\begin{lemma}\label{le3321}
Let $u_{n}\rightharpoonup u$ be a bounded sequencein $H_0^1(\Omega)$ such that $u_{n}\rightarrow u$ a.e
in $\Omega$ as $n\to\infty$, then
\begin{equation}\label{eq:mixed_term_conv1}
\lim_{n\to\infty}\int_{\Omega}u_{n}^{2}\log u_{n}^{2}=\int_{\Omega}u^{2}\log u^{2}.
\end{equation}
\end{lemma}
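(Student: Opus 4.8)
The plan is to reduce the claim to the Br\'ezis--Lieb splitting already available in Lemma \ref{le332} and then show that the "difference'' term is negligible. First I would check the hypothesis of Lemma \ref{le332}, namely that $\{u_n^2\log u_n^2\}$ is bounded in $L^1(\Omega)$. Splitting $\Omega$ into $\{|u_n|\le1\}$ and $\{|u_n|>1\}$ and applying Lemma \ref{lemma:1}, on the first set the integrand is bounded in absolute value by $1/e$, contributing at most $|\Omega|/e$, while on the second set $0<u_n^2\log u_n^2\le\frac{N-2}{2e}|u_n|^{2^*}$. Since $\{u_n\}$ is bounded in $H_0^1(\Omega)$ and $\Omega$ is bounded, the Sobolev embedding $H^1_0(\Omega)\hookrightarrow L^{2^*}(\Omega)$ keeps $|u_n|_{2^*}$ bounded, which yields the desired $L^1$-bound.

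With this bound, Lemma \ref{le332} gives
$$\int_\Omega u_n^2\log u_n^2=\int_\Omega u^2\log u^2+\int_\Omega |u_n-u|^2\log|u_n-u|^2+o(1),$$
so the whole matter reduces to proving $\int_\Omega v_n^2\log v_n^2\to0$, where $v_n:=u_n-u$. Here $v_n\rightharpoonup0$ in $H^1_0(\Omega)$, $v_n\to0$ a.e.\ in $\Omega$, and $\{v_n\}$ is bounded in $H^1_0(\Omega)$; since $\Omega$ is bounded, Rellich--Kondrachov then yields strong convergence $v_n\to0$ in $L^s(\Omega)$ for every $s\in[1,2^*)$.

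To estimate this remainder I would again split according to the size of $|v_n|$. Using that $s^2\log s^2$ grows strictly slower than $s^{2^*}$ (as exploited through \eqref{eq3.2}), for any fixed $\xi\in(2,2^*)$ there is a constant $C_\xi$ with $0<s^2\log s^2\le C_\xi s^{\xi}$ for $s\ge1$, so the contribution of $\{|v_n|>1\}$ is bounded by $C_\xi|v_n|_\xi^\xi\to0$. On $\{|v_n|\le1\}$ the integrand is nonpositive, and for any fixed $\eta\in(0,2)$ there is $C_\eta$ with $|s^2\log s^2|\le C_\eta s^{\eta}$ for $0<s\le1$ (because $s^{2-\eta}|\log s^2|$ stays bounded as $s\to0^+$ and vanishes at $s=1$), so this part is controlled in absolute value by $C_\eta|v_n|_\eta^\eta\to0$. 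Combining the two estimates gives $\int_\Omega v_n^2\log v_n^2\to0$, completing the proof.

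The main obstacle is the region $\{|v_n|>1\}$: the crude pointwise bound by $|v_n|^{2^*}$ is useless, since $H^1_0(\Omega)$ is not compactly embedded in $L^{2^*}(\Omega)$ and $|v_n|_{2^*}$ need not vanish. The decisive observation is that $u\log u^2$ is \emph{strictly subcritical} in growth, so one may replace the exponent $2^*$ by some $\xi<2^*$ and then invoke the compact subcritical embedding to obtain strong $L^\xi$-convergence of $v_n$ to $0$. Throughout, the boundedness of $\Omega$ is essential, both for the compact embeddings and for controlling the small-value region.
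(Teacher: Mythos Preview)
Your proof is correct and follows essentially the same route as the paper: apply the Br\'ezis--Lieb splitting of Lemma~\ref{le332} and then kill the remainder $\int_\Omega |u_n-u|^2\log|u_n-u|^2$ via subcritical pointwise bounds on $s^2\log s^2$ together with the compact embedding of $H^1_0(\Omega)$ into subcritical $L^s(\Omega)$. The only cosmetic difference is that the paper uses the single global estimate $|s^2\log s^2|\le C s^{2-\delta}+C s^{2+\delta}$ in place of your two-region split, which amounts to the same thing.
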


\begin{proof}
Under the conditions, by using Lemma \ref{lemma:1}, there exists some $C > 0$ such that
$$ \left|\int_{\Omega}u_{n}^{2}\log u_{n}^{2}\right|\leq C,\ \ \ \
\left|\int_{\Omega}u^{2}\log u^{2}\right|\leq C.$$
We claim that there exist $C>0$ and $\delta>0$ such that
$|s^2\log s^2|\leq Cs^{2-\delta}+Cs^{2+\delta},\ s\in(0,+\infty)$. Indeed, here we can prove it by
$$\lim_{s\to0}\frac{s^2\log s^2}{s^{2-\delta}}=0\ \ \text{and}\ \ 
\lim_{s\to\infty}\frac{s^2\log s^2}{s^{2+\delta}}=0.$$ According to
that Lemma \ref{le332} and the embedding of $H_0^1(\Omega)\hookrightarrow L^{p+q}(\Omega)$
is compact, we get
$$ \left|\int_{\Omega}|u_n-u|^2\log |u_n-u|^2\right|\leq
C\int_{\Omega}|u_n-u|^{2-\delta}+C\int_{\Omega}|u_n-u|^{2+\delta}\to0 \ \ \text{as}\ \ n\to\infty.$$
Therefore, $$\lim_{n\to\infty}\int_{\Omega}u_{n}^{2}\log u_{n}^{2}=\int_{\Omega}u^{2}\log u^{2}.$$
\end{proof}
 	
\section{Existence of normalized solutions}\label{sec3}
					
Assume from now on that $p$ and $q$ satisfy $2\leq p+q\leq 2^*$.
In this section, we do not distinguish between Sobolev-critical and Sobolev-subcritical
cases, unless otherwise specified.

\begin{lemma}\label{lemma:geometry_mp}
$\mathcal{E}$ restricted to $\mathcal{M}$ is not coercive for every
$\rho_1,\ \rho_2>0$.
\end{lemma}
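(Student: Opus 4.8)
The plan is to disprove coercivity directly: recall that $\mathcal{E}|_{\mathcal{M}}$ would be coercive if $\mathcal{E}(u_{1,n},u_{2,n})\to+\infty$ whenever $\|(u_{1,n},u_{2,n})\|_{H^1_0}\to\infty$ with $(u_{1,n},u_{2,n})\in\mathcal{M}$, so it suffices to exhibit one explicit curve in $\mathcal{M}$ along which the $H^1_0$-norm diverges while the energy fails to go to $+\infty$. The natural device is the $L^2$-preserving dilation. I would fix any smooth $(v_1,v_2)\in\mathcal{M}$ with compact support contained in $\Omega$ (on $\Omega=\R^N$ no restriction is needed), and for $t\ge 1$ set
\[
u_{i,t}(x):=t^{N/2}v_i(tx),\qquad i=1,2.
\]
For $t\ge 1$ the support of $u_{i,t}$ shrinks into $\Omega$, and the change of variables $y=tx$ gives $\int_\Omega u_{i,t}^2\,dx=\int_\Omega v_i^2\,dx=\rho_i$, so that $(u_{1,t},u_{2,t})\in\mathcal{M}$ for every admissible $t$.

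Next I would record how each term of $\mathcal{E}$ scales. The same change of variables yields $\int_\Omega|\nabla u_{i,t}|^2\,dx=t^2\int_\Omega|\nabla v_i|^2\,dx$, hence $\|(u_{1,t},u_{2,t})\|_{H^1_0}^2=t^2\sum_i\|\nabla v_i\|_2^2\to\infty$. For the coupling term one obtains $\int_\Omega|u_{1,t}|^{p}|u_{2,t}|^{q}\,dx=t^{2a}\int_\Omega|v_1|^{p}|v_2|^{q}\,dx$, with $a=N(p+q-2)/4$ as in \eqref{eq:newexponent}. The term requiring care is the logarithmic one: writing $\log u_{i,t}^2=N\log t+\log v_i^2(tx)$ and changing variables gives
\[
\int_\Omega u_{i,t}^2\log u_{i,t}^2\,dx=N\rho_i\log t+\int_\Omega v_i^2\log v_i^2\,dx,
\]
the last integral being finite since $v_i$ is smooth with compact support. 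Collecting the $t$-independent constants into $C$, this produces the clean identity
\[
\mathcal{E}(u_{1,t},u_{2,t})=\frac{t^2}{2}\sum_{i=1}^2\|\nabla v_i\|_2^2-\frac{N\log t}{2}\sum_{i=1}^2\mu_i\rho_i-\frac{2}{p+q}\,t^{2a}\int_\Omega|v_1|^{p}|v_2|^{q}\,dx+C.
\]

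From here non-coercivity follows by comparing the three powers of $t$: since $\|(u_{1,t},u_{2,t})\|_{H^1_0}\to\infty$, it is enough to verify $\mathcal{E}(u_{1,t},u_{2,t})\not\to+\infty$. When $2a>2$ the coupling term $-t^{2a}$ dominates and drives $\mathcal{E}\to-\infty$; when $2a=2$ the gradient and coupling terms are both of order $t^2$, and I would first select the profile $(v_1,v_2)$ so that $\tfrac{2}{p+q}\int_\Omega|v_1|^{p}|v_2|^{q}>\tfrac12\sum_i\|\nabla v_i\|_2^2$, which is possible by optimizing towards the sharp Gagliardo--Nirenberg quotient \eqref{1.12}, again forcing $\mathcal{E}\to-\infty$. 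The main obstacle is exactly this borderline comparison: by the logarithmic Sobolev inequality \eqref{eq:5} the logarithmic contribution grows only like $\log t$ and can be absorbed into an arbitrarily small multiple of $\|\nabla u\|_2^2$, so it cannot by itself defeat the quadratic gradient term. Consequently the whole effect must be produced by the coupling term, and the delicate step is to arrange the profile and the mass so that its coefficient outweighs that of the gradient term; once this is done, $\mathcal{E}(u_{1,t},u_{2,t})\to-\infty$ along a norm-divergent sequence in $\mathcal{M}$, which is precisely the failure of coercivity.
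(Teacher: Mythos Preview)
Your approach via $L^2$-invariant dilations is quite different from the paper's: the paper estimates the nonlinear part of $\mathcal{E}$ from above through Lemma~\ref{lemma:1} and the Gagliardo--Nirenberg inequality, arriving at a lower bound $\mathcal{E}(u_1,u_2)>\Phi(|\nabla u_1|_2,|\nabla u_2|_2)$ whose right-hand side tends to $-\infty$, whereas you compute $\mathcal{E}$ exactly along one concrete curve in $\mathcal{M}$. Where your argument applies it is more direct and yields the stronger conclusion $\mathcal{E}\to-\infty$ rather than merely a diverging lower bound.

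There is, however, a genuine gap. You address only $2a\ge 2$, that is $p+q\ge 2+\tfrac{4}{N}$, while the lemma is asserted for the full range $2\le p+q\le 2^*$, which for every $N\ge 3$ contains the $L^2$-subcritical regime $a<1$. When $a<1$ the coupling contribution scales like $t^{2a}=o(t^2)$ and the logarithmic one only like $\log t$, so along your dilations $\mathcal{E}(u_{1,t},u_{2,t})\sim\tfrac{t^2}{2}\sum_i\|\nabla v_i\|_2^2\to+\infty$ and nothing is proved. Even at the borderline $2a=2$ your remedy is not available ``for every $\rho_1,\rho_2>0$'': the desired inequality $\tfrac{2}{p+q}\int_\Omega|v_1|^p|v_2|^q>\tfrac12\sum_i\|\nabla v_i\|_2^2$ on $\mathcal{M}$ is precisely a supercritical-mass condition at the $L^2$-critical exponent, and the sharp constant in \eqref{1.12} forces it to fail whenever $\rho_1,\rho_2$ lie below the threshold determined by the optimizer $Z$ of \eqref{1.10}. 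In fact, combining the logarithmic Sobolev inequality \eqref{eq:5} (with its free parameter taken small) with \eqref{1.12} shows that for $a<1$, and for $a=1$ with small masses, one has $\mathcal{E}(u_1,u_2)\ge(\tfrac12-\varepsilon)\sum_i\|\nabla u_i\|_2^2-C$ on $\mathcal{M}$, i.e.\ $\mathcal{E}|_{\mathcal{M}}$ is genuinely coercive there; hence no curve of any kind can produce the claimed divergence in that range, and the dilation strategy cannot be repaired.
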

	
\begin{proof}
Using Lemma \ref{lemma:1}, H\"older inequality and Gagliardo-Nirenberg inequality \eqref{1.12} we have
\begin{equation}\label{eq:importantestimateG}
\aligned\ &\ \int_\Omega\left(\mu_1 u_1^2(\log u_1^2-1)
+\frac{4}{p+q}|u_1|^{p}|u_2|^{q}+\mu_2 u_2^2(\log u_2^2-1)\right)\\
\ =&\ \mu_1\left(\int_{u_1^2\geq1}u_1^2\log u_1^2
+\int_{u_1^2\leq1}u_1^2\log u_1^2\right)+\mu_2 \left(\int_{u_2^2\geq1} u_2^2\log u_2^2
+\int_{u_2^2\leq1}u_2^2\log u_2^2\right)\\
\ &\ + \ \frac{4}{p+q}\int_\Omega|u_1|^{p}|u_2|^{q}\\
\ \le&\ \mu_1\left(\int_{u_1^2\geq1}u_1^2\log u_1^2
+\int_{u_1^2\leq1}|u_1^2\log u_1^2|\right)
+ \mu_2\left(\int_{u_2^2\geq1}u_2^2\log u_2^2+\int_{u_2^2\leq1}|u_2^2\log u_2^2|\right)\\
\  &\  +\frac{4}{p+q}\int_\Omega|u_1|^{p}|u_2|^{q}\\
\ \le&\  \int_\Omega(\mu_1|u_1|^{2^*}+\mu_2|u_2|^{2^*})
+\frac{|\Omega|}{e}(\mu_1+\mu_2)+\frac{4}{p+q}|u_1|_{p+q}^{p}|u_2|_{p+q}^{q}\\
\ <&\ \frac{(N-2)}{2e}S(N)\left(\mu_1|\nabla u_1|_2^{2^*}
+\mu_2|\nabla u_2|_2^{2^*}\right)+\frac{4}{p+q}C(N,p+q)
\rho_1^{\frac{2rp}{p+q}}\rho_2^{\frac{2rq}{p+q}}|\nabla u_1|_2^{\frac{2ap}{p+q}}
|\nabla u_2|_2^{\frac{2aq}{p+q}}\\
\ &\ + \  \frac{|\Omega|}{e}(\mu_1+\mu_2).\endaligned
\end{equation}
Since when $p+q=2^*$, we can compute $a=\frac{N(p+q-2)}{4}=2^*,\ r=\frac{ p+q }{4}-\frac{N(p+q-2)}{8}=0.$
	As a consequence we have, for $(u_1,u_2)\in\mathcal{M}$,
	\begin{equation}\label{eq:importantestimate}
	\aligned\mathcal{E}(u_1,u_2)
	&>\frac{1}{2}(|\nabla u_1|_2^2+|\nabla u_2|_2^2)-\frac{1}{2}\bigg(\frac{(N-2)}{2e}S(N)
	\left(\mu_1|\nabla u_1|_2^{2^*}
	+\mu_2|\nabla u_2|_2^{2^*}\right)\\
	&\ \ \  \ +\frac{4}{p+q}C(N,p+q)
	\rho_1^{\frac{2rp}{p+q}}\rho_2^{\frac{2rq}{p+q}}
	|\nabla u_1|_2^{\frac{2ap}{p+q}}|\nabla u_2|_2^{\frac{2aq}{p+q}}+\frac{|\Omega|}{e}(\mu_1+\mu_2)\bigg)\\
	&\to-\infty\endaligned
	\end{equation}
	 as $\|(u_1,u_2)\|_{H_0^1(\Omega)}\to+\infty.$ This concludes the proof.
	\end{proof}

Recall that, for $\alpha\geq\lambda_1(\Omega)$, $\mathcal{B}_\alpha$ and
$\mathcal{U}_\alpha$ are defined in \eqref{eq:Balpha}, while $c_\alpha$ and $\hat c_\alpha$ are as in
\eqref{eq:calpha}. Notice that $\mathcal{B}_\alpha\neq \emptyset$, since it contains at least
$(\sqrt{\rho}_1\varphi_1,\sqrt{\rho}_2\varphi_2)$. Moreover
$$c_{\lambda_1(\Omega)}=\hat c_{\lambda_1(\Omega)}
=\mathcal{E}(\sqrt{\rho}_1\varphi_1,\sqrt{\rho}_2\varphi_2).$$
Recalling \eqref{eq:importantestimate} and the identification $x=|\nabla u_1|_2$ and $y=|\nabla u_2|_2$,
we are considering the function $\Phi:\R^2_+\to\mathbb R$ defined by
\begin{equation}\label{eq:vphi_def}
\Phi(x,y)=\frac{1}{2}\left(x^2+y^2-
\frac{(N-2)}{2e}S(N)(\mu_1 x^{2^*}+\mu_2y^{2^*})+\frac{4C(N,p+q)}{p+q}\rho_1^{\frac{2rp}{p+q}}
\rho_2^{\frac{2rq}{p+q}}
x^{\frac{2ap}{p+q}}y^{\frac{2aq}{p+q}}+\frac{|\Omega|}{e}(\mu_1+\mu_2)\right).
\end{equation}
Indeed, it follows from \eqref{eq:importantestimate} that
\begin{equation}\label{eq:Phi_ineq}
\mathcal{E}(u_1,u_2)\geq \Phi\left(|\nabla u_1|_2,|\nabla u_2|_2\right)\ \ \text{for any}\ \
(u_1,u_2)\in\mathcal{M}.
\end{equation}
In particular, this allows to estimate $\hat c_\alpha$ from below. To do that, we define the following
subsets of $\R^2$:
$$U_\alpha=\left\{(x,y)\in\mathbb R^2_+:\ x^2+y^2=(\rho_1+\rho_2)\alpha\right\}, \ \ V_\alpha=U_\alpha\cap
\left\{x\geq\sqrt{\rho_1\lambda_1(\Omega)},\ y\geq \sqrt{\rho_2\lambda_1(\Omega)}\right\}.$$
The set $U_\alpha$ is obtained from $\mathcal{U}_\alpha$ through the identification $x=|\nabla u_1|_2$
and $y=|\nabla u_2|_2$. The set $V_\alpha$ is motivated by the fact that, for $(u_1,u_2)\in\mathcal{M}$,
$|\nabla u_1|_2^2\geq\rho_1\lambda_1(\Omega)$ and $|\nabla u_2|_2^2\geq\rho_2\lambda_1(\Omega)$. From
\eqref{eq:Phi_ineq}, we deduce
\begin{equation}\label{eq:iniziostima}
\aligned \hat{c}_\alpha\ge\min_{(x,y)\in V_\alpha}\Phi(x,y)&=
\frac{1}{2}\bigg((\rho_1+\rho_2)\alpha- \max_{(x,y)\in V_\alpha}
\Bigl(\frac{(N-2)}{2e}S(N)(\mu_1 x^{2^*}+\mu_2y^{2^*})\\
&\ \ \ \ +\frac{4}{p+q}C(N,p+q)
\rho_1^{\frac{2rp}{p+q}}\rho_2^{\frac{2rq}{p+q}}x^{\frac{2ap}{p+q}}y
^{\frac{2aq}{p+q}}+\frac{|\Omega|}{e}(\mu_1+\mu_2)\Bigr)\bigg).\endaligned
\end{equation}
Now, due to the limitations in the definition of $V_\alpha$, the last maximum can not be explicitly
expressed by $\alpha$ (except for a few particular cases). Therefore, we prefer a more rough
estimate, where $V_\alpha$ is replaced by $U_\alpha$.

\begin{lemma}\label{lem:hatc_from_below}
For every $\alpha>\lambda_1(\Omega)$,
$$\hat c_\alpha>\frac12\left((\rho_1+\rho_2)\alpha-\Lambda(\rho_1,\rho_2)
(\rho_1+\rho_2)^{a}\alpha^{a}\right),$$
where
\begin{equation}\label{eq:defdd}
\aligned\Lambda=\Lambda(\rho_1,\rho_2):&=\max_{t\in[0,\frac{\pi}{2}]}
\bigg(\frac{(N-2)}{2e}S(N)((\rho_1+\rho_2)\alpha)^{\frac{2^*}{2}-a}(\mu_1\cos^{2^*}t+
\mu_2\sin^{2^*}t)\\
&\ \ \ \ +\frac{4}{p+q}C(N,p+q)\rho_1^{\frac{2rp}{p+q}}\rho_2^{\frac{2rq}{p+q}}
\cos^{\frac{2ap}{p+q}}t\sin^{\frac{2aq}{p+q}}t+\frac{|\Omega|}{e}(\mu_1+\mu_2)\bigg).
\endaligned
\end{equation}
\end{lemma}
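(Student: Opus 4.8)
The plan is to start from the lower bound already prepared in \eqref{eq:Phi_ineq}--\eqref{eq:iniziostima} and to trade the angular constraint set $V_\alpha$, over which the relevant maximum has no closed form, for the full quarter-circle $U_\alpha$. Abbreviating by $G(x,y)$ the bracketed maximand appearing in \eqref{eq:iniziostima}, namely $\frac{(N-2)}{2e}S(N)(\mu_1 x^{2^*}+\mu_2 y^{2^*})+\frac{4}{p+q}C(N,p+q)\rho_1^{\frac{2rp}{p+q}}\rho_2^{\frac{2rq}{p+q}}x^{\frac{2ap}{p+q}}y^{\frac{2aq}{p+q}}+\frac{|\Omega|}{e}(\mu_1+\mu_2)$, I would use the inclusion $V_\alpha\subseteq U_\alpha$ to get $\max_{V_\alpha}G\le\max_{U_\alpha}G$, so that \eqref{eq:iniziostima} yields $\hat c_\alpha\ge\frac12\big((\rho_1+\rho_2)\alpha-\max_{U_\alpha}G\big)$. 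It then suffices to evaluate $\max_{U_\alpha}G$ and compare it with $\Lambda(\rho_1,\rho_2)\,((\rho_1+\rho_2)\alpha)^a$.

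To compute this maximum I would parametrize the quarter-circle in polar coordinates, writing $(x,y)=\sqrt{(\rho_1+\rho_2)\alpha}\,(\cos t,\sin t)$ with $t\in[0,\tfrac\pi2]$, which automatically enforces $x^2+y^2=(\rho_1+\rho_2)\alpha$. The key structural fact is that the two genuinely $t$-dependent pieces of $G$ are positively homogeneous in $(x,y)$: the Sobolev piece $\mu_1 x^{2^*}+\mu_2 y^{2^*}$ has degree $2^*$, while the coupling piece $x^{\frac{2ap}{p+q}}y^{\frac{2aq}{p+q}}$ has degree $\frac{2ap}{p+q}+\frac{2aq}{p+q}=2a$. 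Setting $R=\sqrt{(\rho_1+\rho_2)\alpha}$ and using $R^{2^*}=((\rho_1+\rho_2)\alpha)^{\frac{2^*}{2}-a}((\rho_1+\rho_2)\alpha)^a$ together with $R^{2a}=((\rho_1+\rho_2)\alpha)^a$, I can factor $((\rho_1+\rho_2)\alpha)^a$ out of both variable pieces; the remaining angular function, maximized over the compact interval $[0,\tfrac\pi2]$, is precisely the bracket defining $\Lambda(\rho_1,\rho_2)$ in \eqref{eq:defdd}, and continuity guarantees the angular maximum is attained.

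The delicate point, and what I expect to be the main obstacle, is the additive constant $c:=\frac{|\Omega|}{e}(\mu_1+\mu_2)$ produced by the region $\{u_i^2\le1\}$ in \eqref{eq:importantestimateG}: it is the only term in $G$ that is not homogeneous in $(x,y)$, hence the only one that does not acquire the factor $((\rho_1+\rho_2)\alpha)^a$. Writing $M:=((\rho_1+\rho_2)\alpha)^a$, a careful bookkeeping gives $G=M\big(g(t)-c\big)+c$ along $U_\alpha$, where $g(t)$ is the angular integrand of $\Lambda$, so that $\max_{U_\alpha}G=M(\Lambda-c)+c=\Lambda M-c(M-1)$. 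Since $c>0$ and $M=((\rho_1+\rho_2)\alpha)^a\ge1$, the correction $-c(M-1)$ is non-positive, whence $\max_{U_\alpha}G\le\Lambda\,((\rho_1+\rho_2)\alpha)^a$; one must verify that factoring by $((\rho_1+\rho_2)\alpha)^a$ only helps and never reverses the sign of this correction, which is exactly where the constant-term bookkeeping has to be done honestly.

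Finally, the strict inequality in the conclusion is inherited from the strictness already built into \eqref{eq:importantestimate}: on a bounded domain the Gagliardo--Nirenberg and Sobolev inequalities invoked in \eqref{eq:importantestimateG} are strict for every nontrivial pair, so $\mathcal E(u_1,u_2)>\Phi(|\nabla u_1|_2,|\nabla u_2|_2)\ge\min_{U_\alpha}\Phi$ for each $(u_1,u_2)\in\mathcal U_\alpha$; combined with the identity $\max_{U_\alpha}G=\Lambda M-c(M-1)$, which is strictly below $\Lambda M$ as soon as $M>1$ (the generic situation for $\alpha>\lambda_1(\Omega)$ and $p+q>2$), this upgrades the estimate to the strict bound $\hat c_\alpha>\frac12\big((\rho_1+\rho_2)\alpha-\Lambda(\rho_1,\rho_2)(\rho_1+\rho_2)^a\alpha^a\big)$. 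I would treat the residual degenerate cases (where $M=1$) by leaning on the strict functional inequality $\mathcal E>\Phi$ rather than on the angular estimate, since no concentration can occur at fixed total gradient $(\rho_1+\rho_2)\alpha$.
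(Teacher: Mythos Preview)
Your approach is exactly the paper's: the entire proof there reads ``since $V_\alpha\subset U_\alpha=\{(\cos t,\sin t)\sqrt{(\rho_1+\rho_2)\alpha}:t\in[0,\tfrac\pi2]\}$, we conclude by \eqref{eq:iniziostima},'' and your polar parametrization and homogeneity factoring are precisely the computation that sentence hides. Your bookkeeping around the additive constant $c=\tfrac{|\Omega|}{e}(\mu_1+\mu_2)$ and the side condition $M=((\rho_1+\rho_2)\alpha)^a\ge1$ is in fact \emph{more} careful than the paper, which glosses over that term entirely; just be aware that $M\ge1$ is not guaranteed for all $\rho_1,\rho_2,\alpha$, so this is a residual imprecision in the lemma as stated rather than a flaw in your reading of the argument.
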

		
\begin{proof}
Obviously, since
$$V_\alpha\subset U_\alpha=\left\{(\cos t,\sin t)\sqrt{(\rho_1+\rho_2) \alpha}:
t\in\left[0,\frac{\pi}{2}\right]\right\},$$
we conclude by \eqref{eq:iniziostima}.
\end{proof}
\begin{remark}
Observe that $\Lambda$ depends on $\mu_1,\mu_2$, and also on $p$ and
$N$ (via $a$ and $r$).
Furthermore, in case $p+q=2^*$, we know that $a=\frac{2^*}{2}$, $r=0$ and $\Lambda$ does not depend on
$\rho_1,\rho_2$, and actually its definition coincides with that given in
\eqref{eq:def_Lambda}. Therefore, according to the definition of $S_N$ given in
\eqref{eq:Sobolev_constant}, for every $(\tau_1,\tau_2)\in H^1_0(\Omega;\mathbb R^2)$, we obtain
\begin{equation}\label{eq:lambda_max_def}
\aligned\ &\ S(N)\left(\frac{(N-2)}{2e}
(\mu_1|\nabla\tau _1|_2^{2^*}+\mu_2|\nabla\tau_{2}|_2^{2^*})
+\frac{2(N-2)}{N}|\nabla\tau _1|_2^{p}
|\nabla\tau _2|_2^{q} +\frac{|\Omega|}{e}(\mu_1+\mu_2)\right)\\
\ \le&\ \Lambda\left(|\nabla\tau _1|_2^{2}
+|\nabla \tau _{2}|_2^2\right)^{\frac{2^*}{2}}.\endaligned
\end{equation}
To see this, for every $(\tau_1,\tau_2)$, one can find $t\in\left[0,\frac{\pi}{2}\right]$ such that
$$|\nabla\tau_1|_2=\left(|\nabla\tau_1|_2^{2}+|\nabla\tau_{2}|_2^2\right)^{\frac{1}{2}}
\cos t, \ \ |\nabla\tau_2|_2=\left(|\nabla\tau_1|_2^{2}+|\nabla\tau_{2}|_2^2\right)
^{\frac{1}{2}}\sin t,$$
and we substitute in \eqref{eq:defdd}.
\end{remark}
	
As mentioned earlier, we will look for local minimizers of $\mathcal{E} $ on
$\mathcal{B}_\alpha$, hence at level $c_\alpha$, for suitable values of $\alpha$. The
first necessary step is to prove that $c_\alpha$ is achieved. This is easily obtained,
for any $\alpha\ge\lambda_1(\Omega)$, in the Sobolev-subcritical case. Indeed,
in this case, $\mathcal{B}_\alpha$ is weakly compact and $\mathcal{E}$ weakly
lower semicontinuous, by using the method of the calculus of variations
(see \cite[Remark 2.3.3]{for beginer}), $c_\alpha$ is achieved by a couple $(u_1,u_2)$.

In the following, recall that $\Lambda$ given in
\eqref{eq:defdd} and that, in the Sobolev-critical case, it does not depend on $\rho_1,\rho_2$.
			
\begin{lemma}\label{prop:Sobcritconv}
Consider $\Omega\subset \R^N(N\geq 3)$ is a bounded smooth domain and $p+q=2^*$.  Let $\alpha>\lambda_1(\Omega)$, $\rho_1,\rho_2>0$ satisfy
$$(\rho_1+\rho_2)(\alpha-\lambda_1(\Omega))<\frac{1}{\Lambda^{\frac{N-2}{2}}},$$
and let $(u_{1,n},u_{2,n})_n$ be such that
\begin{equation}\label{eq:minimizing_seq}
\left\{\aligned&|u_{i,n}|_2^2=\rho_i+o(1), \ \ i=1,2, \\
&|\nabla u_{1,n}|_2^2+|\nabla u_{2,n}|_2^2\leq\alpha(\rho_1+\rho_2)+o(1), \ \ &\\
&c_\alpha\leq\mathcal{E}(u_{1,n},u_{2,n})\leq c_\alpha+o(1),\ \ &\endaligned\right.
\end{equation}
as $n\to\infty$. Then, up to subsequences,
$$(u_{1,n},u_{2,n})\to(\bar u_{1},\bar u_{2})\ \ \text{ in}\ \ \mathcal{H} (\Omega).$$
In particular, $c_\alpha$ is achieved.
\end{lemma}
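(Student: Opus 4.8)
The plan is to run a Br\'ezis--Nirenberg concentration--compactness argument, exploiting that in the critical regime the only terms of $\mathcal{E}$ that can lose compactness are the Dirichlet and the coupling terms, whereas the logarithmic and $L^2$ parts are controlled by compact subcritical embeddings. First I would extract limits: the second line of \eqref{eq:minimizing_seq} makes $(u_{1,n},u_{2,n})$ bounded in $H_0^1(\Omega;\R^2)$, so up to a subsequence $u_{i,n}\rightharpoonup\bar u_i$ in $H_0^1(\Omega)$, $u_{i,n}\to\bar u_i$ a.e.\ in $\Omega$, and by Rellich--Kondrachov $u_{i,n}\to\bar u_i$ in $L^s(\Omega)$ for every $2\le s<2^*$. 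In particular $|u_{i,n}|_2^2\to|\bar u_i|_2^2$, so the first line of \eqref{eq:minimizing_seq} forces $|\bar u_i|_2^2=\rho_i$; combined with weak lower semicontinuity of the Dirichlet integral and the gradient bound, this gives $(\bar u_1,\bar u_2)\in\mathcal{B}_\alpha$, whence $\mathcal{E}(\bar u_1,\bar u_2)\ge c_\alpha$.

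Next I would split the energy along $v_{i,n}:=u_{i,n}-\bar u_i$. Weak convergence yields $|\nabla u_{i,n}|_2^2=|\nabla\bar u_i|_2^2+|\nabla v_{i,n}|_2^2+o(1)$; Lemma \ref{le3321} (its proof uses only the auxiliary exponents $2\pm\delta<2^*$, hence applies at the critical exponent) gives $\int_\Omega u_{i,n}^2\log u_{i,n}^2\to\int_\Omega\bar u_i^2\log\bar u_i^2$; and Lemma \ref{32} gives $\int_\Omega|u_{1,n}|^p|u_{2,n}|^q=\int_\Omega|\bar u_1|^p|\bar u_2|^q+\int_\Omega|v_{1,n}|^p|v_{2,n}|^q+o(1)$. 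Since $p+q=2^*$, these combine to $\mathcal{E}(u_{1,n},u_{2,n})=\mathcal{E}(\bar u_1,\bar u_2)+T_n+o(1)$, where $T_n:=\frac12(|\nabla v_{1,n}|_2^2+|\nabla v_{2,n}|_2^2)-\frac{2}{2^*}\int_\Omega|v_{1,n}|^p|v_{2,n}|^q$. Using $\mathcal{E}(u_{1,n},u_{2,n})\to c_\alpha$ and $\mathcal{E}(\bar u_1,\bar u_2)\ge c_\alpha$, and passing to a further subsequence so that $A:=\lim|\nabla v_{1,n}|_2^2$, $B:=\lim|\nabla v_{2,n}|_2^2$ and $L:=\lim\int_\Omega|v_{1,n}|^p|v_{2,n}|^q$ exist, I obtain $\tfrac12(A+B)-\tfrac{2}{2^*}L\le0$.

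The decisive step rules out concentration via the threshold hypothesis. H\"older together with the Sobolev inequality $|v|_{2^*}^{2^*}\le S(N)|\nabla v|_2^{2^*}$ gives $L\le S(N)A^{p/2}B^{q/2}$, while specializing \eqref{eq:lambda_max_def} to $(\sqrt A,\sqrt B)$ and discarding the nonnegative leftover terms (noting its coupling coefficient equals $\frac{4}{2^*}$) yields $S(N)\frac{4}{2^*}A^{p/2}B^{q/2}\le\Lambda(A+B)^{2^*/2}$. Chaining these, $\frac12(A+B)\le\frac{2}{2^*}L\le\frac12\Lambda(A+B)^{2^*/2}$, so $(A+B)\le\Lambda(A+B)^{2^*/2}$; hence either $A+B=0$ or $A+B\ge\Lambda^{-(N-2)/2}$. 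On the other hand, inserting $|\nabla\bar u_i|_2^2\ge\lambda_1(\Omega)\rho_i$ into the gradient bound gives $A+B\le(\rho_1+\rho_2)(\alpha-\lambda_1(\Omega))<\Lambda^{-(N-2)/2}$, which excludes the second alternative. Thus $A+B=0$, i.e.\ $u_{i,n}\to\bar u_i$ strongly in $H_0^1(\Omega)$; together with the convergence of the logarithmic terms this upgrades to convergence in $\mathcal{H}(\Omega)$, and continuity of $\mathcal{E}$ forces $\mathcal{E}(\bar u_1,\bar u_2)=c_\alpha$, so $c_\alpha$ is achieved.

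I expect the main obstacle to be precisely this third step: one must confirm that the critical coupling term is the sole possible carrier of concentration and then calibrate $S(N)$, the coefficient $\frac{2}{2^*}$, and the quantity $\Lambda$ of \eqref{eq:lambda_max_def} so that the two bounds $(A+B)\le\Lambda(A+B)^{2^*/2}$ and $A+B<\Lambda^{-(N-2)/2}$ are mutually incompatible unless $A+B=0$. A secondary technical point is justifying that Lemma \ref{le3321} remains valid at $p+q=2^*$, which reduces to choosing $\delta<2^*-2$ so that the exponents $2\pm\delta$ stay strictly subcritical.
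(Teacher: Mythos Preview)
Your argument is correct and follows the same Br\'ezis--Nirenberg concentration--compactness scheme as the paper: extract a weak limit in $\mathcal{B}_\alpha$, split the energy along $v_{i,n}=u_{i,n}-\bar u_i$, derive the dichotomy $A+B=0$ or $A+B\ge\Lambda^{-(N-2)/2}$, and rule out the second branch via the gradient bound combined with Poincar\'e. The one substantive difference is in how the logarithmic terms are handled. The paper invokes the Br\'ezis--Lieb splitting of Lemma~\ref{le332} to write $\mathcal{E}(u_{1,n},u_{2,n})=\mathcal{E}(\bar u_1,\bar u_2)+\mathcal{E}(\tau_{1,n},\tau_{2,n})+o(1)$ with the full defect energy, and then bounds the logarithmic pieces of $\mathcal{E}(\tau_{1,n},\tau_{2,n})$ pointwise via Lemma~\ref{lemma:1}, which is why the $\mu_i$-- and $|\Omega|/e$--terms appear in its estimate and are absorbed by the full $\Lambda$ through \eqref{eq:lambda_max_def}. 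You instead use Lemma~\ref{le3321} to show the logarithmic integrals converge outright, so your defect reduces to the purely critical quantity $T_n=\tfrac12(|\nabla v_{1,n}|_2^2+|\nabla v_{2,n}|_2^2)-\tfrac{2}{2^*}\int_\Omega|v_{1,n}|^p|v_{2,n}|^q$; only the homogeneous coupling piece of $\Lambda$ is then needed, and the inequality $S(N)\tfrac{4}{2^*}A^{p/2}B^{q/2}\le\Lambda(A+B)^{2^*/2}$ follows directly from the definition \eqref{eq:def_Lambda} by scaling. This is a genuine streamlining: it isolates the sole noncompact term cleanly and sidesteps any issue with the nonhomogeneous constant $\tfrac{|\Omega|}{e}(\mu_1+\mu_2)$ in \eqref{eq:lambda_max_def}, at the modest cost of checking (as you note) that the auxiliary exponents $2\pm\delta$ in the proof of Lemma~\ref{le3321} stay strictly below $2^*$.
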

		
\begin{proof}
According to the assumption, there exists $(\bar u_1,\bar u_2)\in H^1_0(\Omega;\mathbb R^2)$
such that, up to subsequences,
$$\left\{\aligned &|\bar u_i|_2^2=\rho_i,\ \ &i=1,2, \\
&u_{i,n}\rightharpoonup\bar u_i\ \  \text{in}\ \ H^1_0(\Omega),\ \ &i=1,2,\\
&|\nabla\bar u_i|_2^2\leq\liminf_{n\to\infty}|\nabla u_{i,n}|_2^2,\ \ &i=1,2.\endaligned\right.$$
Observe that $(\bar{u}_1,\bar{u}_2)$ is admissible for the minimization problem
$c_\alpha$, whence
\begin{equation}\label{eq:bar_u_i_admissible}
\mathcal{E}(\bar{u}_1,\bar{u}_2)\geq c_\alpha.
\end{equation}
Let $\tau_{i,n}=u_{i,n}-\bar u_i$ and notice that, for $i=1,2$,
\begin{equation}\label{eq:v_i_n}
\tau_{i,n}\rightharpoonup 0 \ \ \text{in}\ \ H^1_0(\Omega)\ \
\text{and}\ \ L^{2^*}(\Omega),
 \ \ \tau_{i,n}\to0 \ \ \text{in}\ \ L^2(\Omega).
\end{equation}
Note that the strong convergence of a subsequence of $(u_{1,n},u_{2,n})$ is
equivalent to the statement:
\begin{equation}\label{eq:BN_easy_case}
\text{ there exists a subsequence }\ \ (\tau_{1,n_k},\tau_{2,n_k})\ \ \text{such that}\ \
|\nabla \tau_{1,n_k}|_2^2+|\nabla \tau_{2,n_k}|_2^2\to 0.
\end{equation}
In such a case, by continuity of the Sobolev embeddings, we obtain
$c_\alpha=\mathcal{E}(\bar{u}_1,\bar{u}_2)$. Since a minimizing sequence for $c_\alpha$
exists, and it satisfies \eqref{eq:minimizing_seq}, we deduce that $c_\alpha$ is
achieved.
			
To conclude the proof, suppose by contradiction that \eqref{eq:BN_easy_case} does not
 hold, so that
\begin{equation}\label{eq:BN_difficult_case}
|\nabla \tau_{1,n}|_2^2+|\nabla \tau_{2,n}|_2^2\geq K>0.
\end{equation}
We can write
\begin{equation}\label{eq3.9}
\aligned\ &\ \mathcal{E}(u_{1,n},u_{2,n})\\
\ =&\ \frac{1}{2}(|\nabla(\bar{u}_1+\tau_{1,n})|_2^2
+|\nabla(\bar{u}_2+\tau _{2,n}|_2^2))+\frac{1}{2}\int_\Omega (\mu_1(\bar{u}_1
+\tau_{1,n})^2+\mu_2(\bar{u}_2+\tau_{2,n})^2)\\
&\ -\frac{1}{2}\int_\Omega\bigg(\mu_1(\bar{u}_1+\tau_{1,n})^2
\log(\bar{u}_1+\tau_{1,n})^2
+\frac{2(N-2) }{N}|(\bar{u}_1+\tau_{1,n})|^{p}
|(\bar{u}_2+\tau_{2,n})|^{q}\\
&\ +\mu_2(\bar{u}_2
+\tau_{2,n})^2\log(\bar{u}_2+\tau_{2,n})^2\bigg).\endaligned
\end{equation}
According to $u_{i,n}\rightharpoonup \bar u_i$ in $H^1_0(\Omega)$ and \eqref{eq:BL}, we have, for
 $i=1,2$,
\begin{equation}\label{eq:weak_conv}
|\nabla (\bar{u}_i+\tau_{i,n})|_2^2=
|\nabla \bar{u}_i|_2^2+|\nabla\tau _{i,n}|_2^2+o(1)\ \ \text{as}\ \ n\to\infty,
\end{equation}
and
\begin{equation}\label{eq:weak_conv2}
|\bar{u}_i+\tau_{i,n}|_2^2=|\bar{u}_i|_2^2+|\nabla\tau _{i,n}|_2^2+o(1)\ \ \text{as}\ \ n\to\infty.
\end{equation}
Using Lemmas \ref{le332} and \ref{32}, by replacing \eqref{eq:weak_conv}  and
 \eqref{eq:weak_conv2} into \eqref{eq3.9}, one has
$$\mathcal{E}(u_{1,n},u_{2,n})=\mathcal{E}(\bar u_1,\bar u_2)+\mathcal{E}
(\tau_{1,n},
\tau _{2,n})+o(1) \ \  \text{as}\ \ n\to\infty.$$
The last expression, together with \eqref{eq:minimizing_seq} and
\eqref{eq:bar_u_i_admissible}, implies
$$\mathcal{E}(\tau _{1,n},\tau _{2,n})\leq o(1) \ \  \text{as}\ \ n\to\infty,$$
whence, it follows from \eqref{eq:importantestimateG}, \eqref{eq:importantestimate}
(with $r=0$, $a=\frac{2^*}{2}$ and
 $C_{N,2^*}=S_N$) and \eqref{eq:lambda_max_def} that
$$\aligned &\ |\nabla\tau_{1,n}|_2^2+|\nabla\tau _{2,n}|_2^2\\
\leq&\
S(N)\left(
(\mu_1|\nabla\tau _1|_2^{2^*}+\mu_2|\nabla\tau_{2}|_2^{2^*})
+\frac{2(N-2)}{N}|\nabla\tau _1|_2^{p}
|\nabla\tau _2|_2^{q} +\frac{|\Omega|}{e}(\mu_1+\mu_2)\right)+o(1)\\
\leq&\ \Lambda\left(|\nabla\tau _{1,n}|_2^{2}+|\nabla\tau_{2,n}|_2^2\right)
^{\frac{N}{N-2}}+o(1).\endaligned$$
Next, we can use \eqref{eq:BN_difficult_case} to rewrite the last inequality as
$$\left(|\nabla \tau_{1,n}|_2^2+|\nabla \tau_{2,n}|_2^2\right)^{\frac{2}{N-2}}
\ge\frac{1}{\Lambda }+o(1).$$
Combining the previous inequality with \eqref{eq:weak_conv}, we obtain
$$\aligned\left(\frac{1}{\Lambda }+o(1)\right)^{\frac{N-2}{2} }
&\leq|\nabla \tau_{1,n}|_2^2
+|\nabla \tau _{2,n}|_2^2\\
&=|\nabla u_{1,n}|_2^2+|\nabla u_{2,n}|_2^2-(|\nabla\bar{u}_1|_2^2
+|\nabla \bar{u}_2|_2^2)+o(1)\\
&\le(\rho_1+\rho_2)\alpha-\lambda_1(\Omega)(\rho_1+\rho_2)+o(1),
\endaligned $$
as $n\to+\infty$, which contradicts the assumption.
\end{proof}	

\begin{proof}[Proof of Theorem \ref{prop:compact_intro}]
The theorem follows as a particular case of Lemma \ref{prop:Sobcritconv}, when
in \eqref{eq:minimizing_seq} both $|u_{i,n}|_2^2=\rho_i$, $i=1,2$, and
$|\nabla u_{1,n}|_2^2+|\nabla u_{2,n}|_2^2\leq\alpha(\rho_1+\rho_2)$.
\end{proof}

\begin{lemma}\label{lem:c<hat_c}
Assume that $\rho_1,\rho_2>0$ are such that, for some $\alpha_1,\alpha_2$,
$$\lambda_1(\Omega)\le\alpha_1<\alpha_2\ \ \text{and}\ \
\hat c_{\alpha_1}<\hat c_{\alpha_2};$$
furthermore, in the Sobolev-critical case $p+q=2^*$, we also
assume that
$$\alpha_2<\lambda_1(\Omega) +\frac{\Lambda ^{-\frac{N-2}{2}}}{\rho_1+\rho_2},$$
where $\Lambda$ is defined in \eqref{eq:def_Lambda}. Then $c_{\alpha_2}<\hat c_{\alpha_2}$, and $c_{\alpha_2}$ is achieved by a positive
solution of \eqref{eq:1}.
\end{lemma}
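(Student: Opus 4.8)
The plan is to exploit the layered structure of $\mathcal{B}_{\alpha_2}$ as a union of the level sets $\mathcal{U}_\beta$. First I would note that, by the variational characterization of $\lambda_1(\Omega)$, every $(u_1,u_2)\in\mathcal{M}$ satisfies $|\nabla u_1|_2^2+|\nabla u_2|_2^2\ge(\rho_1+\rho_2)\lambda_1(\Omega)$, so that
$$\mathcal{B}_{\alpha_2}=\bigcup_{\lambda_1(\Omega)\le\beta\le\alpha_2}\mathcal{U}_\beta,
\qquad\text{whence}\qquad
c_{\alpha_2}=\inf_{\lambda_1(\Omega)\le\beta\le\alpha_2}\hat c_\beta.$$
Since $\alpha_1\in[\lambda_1(\Omega),\alpha_2)$ is an admissible value of $\beta$, the hypothesis $\hat c_{\alpha_1}<\hat c_{\alpha_2}$ immediately gives $c_{\alpha_2}\le\hat c_{\alpha_1}<\hat c_{\alpha_2}$, which is the first assertion.

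Next I would show that $c_{\alpha_2}$ is achieved. In the Sobolev-subcritical case this is standard: $\mathcal{B}_{\alpha_2}$ is weakly closed in $\mathcal{M}$ and $\mathcal{E}$ is weakly lower semicontinuous, the coupling and logarithmic terms passing to the limit by Lemmas \ref{32} and \ref{le3321} via the compact embedding, so a minimizing sequence converges weakly to a minimizer. In the Sobolev-critical case $p+q=2^*$, the assumption $\alpha_2<\lambda_1(\Omega)+\Lambda^{-\frac{N-2}{2}}/(\rho_1+\rho_2)$ is precisely the compactness condition appearing in Lemma \ref{prop:Sobcritconv}; hence that lemma (equivalently Theorem \ref{prop:compact_intro}) guarantees that any minimizing sequence for $c_{\alpha_2}$ is relatively compact in $\mathcal{B}_{\alpha_2}$, producing a minimizer $(\bar u_1,\bar u_2)$.

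Finally, I would locate the minimizer in the interior and extract the equation. Because $\mathcal{E}\ge\hat c_{\alpha_2}>c_{\alpha_2}$ on the boundary $\mathcal{U}_{\alpha_2}$, the minimizer must satisfy $|\nabla\bar u_1|_2^2+|\nabla\bar u_2|_2^2<(\rho_1+\rho_2)\alpha_2$, so the gradient constraint defining $\mathcal{B}_{\alpha_2}$ is inactive and $(\bar u_1,\bar u_2)$ is a local minimizer of $\mathcal{E}$ subject only to the two mass constraints in $\mathcal{M}$. A Lagrange multiplier argument then furnishes $\omega_1,\omega_2\in\R$ for which $(\bar u_1,\bar u_2)$ is a weak solution of \eqref{eq:1}. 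The main obstacle here is that $\mathcal{E}$ is not $C^1$ because of the logarithmic term; I would circumvent this by working with the Gateaux derivative \eqref{eq37-5}, which exists along every direction $\varphi_i\in C_0^\infty(\Omega)$, and by testing the minimality against the two mass constraints to identify the multipliers. For positivity, I would first replace $(\bar u_1,\bar u_2)$ by $(|\bar u_1|,|\bar u_2|)$, which lies in $\mathcal{B}_{\alpha_2}$ with the same energy and hence is again a minimizer solving the system; I would then invoke a strong maximum principle to conclude $\bar u_i>0$. The delicate point is the behaviour of $s\log s^2$ near $s=0$, so rather than a naive application I would use a truncation argument adapted to logarithmic nonlinearities to rule out interior zeros.
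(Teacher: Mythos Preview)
Your proposal is correct and follows essentially the same approach as the paper: both use the decomposition $c_{\alpha_2}=\inf\{\hat c_\beta:\lambda_1(\Omega)\le\beta\le\alpha_2\}\le\hat c_{\alpha_1}<\hat c_{\alpha_2}$, obtain the minimizer via weak lower semicontinuity in the subcritical case and via Lemma~\ref{prop:Sobcritconv} in the critical case, and conclude that the minimizer lies in $\mathcal{B}_{\alpha_2}\setminus\mathcal{U}_{\alpha_2}$. The paper's proof is terser, simply writing ``the lemma follows'' once the minimizer is located in the interior, whereas you spell out the Lagrange multiplier step (via the Gateaux derivative~\eqref{eq37-5}) and the positivity argument; this extra detail is reasonable given the non-smoothness of the logarithmic term, but the underlying strategy is identical.
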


\begin{proof}
Firstly, $c_{\alpha_2}$ is achieved by some $(\bar {u}_1,\bar{u}_2)\in\mathcal{B}_{\alpha_2}$: as we already
observed, this is trivial in the Sobolev-subcritical case,
while in the critical one it follows by
Lemma \ref{prop:Sobcritconv}. Next we observe that
$$c_{\alpha_2} = \min\left\{\hat c_\alpha:\lambda_1(\Omega)\le \alpha \le
\alpha_2\right\}\le\hat c_{\alpha_1} < \hat c_{\alpha_2}.$$
Therefore, $(\bar{u}_1,\bar {u}_2)\in\mathcal{B} _{\alpha_2}\setminus
\mathcal{U}_{\alpha_2}$, and the lemma
follows.
\end{proof}

We denote by $\lambda_2(\Omega)$ the second eigenvalue of $-\Delta$ in
$H^1_0(\Omega)$, and by $\varphi_2$ a corresponding eigenfunction.

\begin{lemma}\label{lem:estimate_c}
We have
$$(\sqrt{\rho_1}\varphi_1,\sqrt{\rho_2} \varphi_1)\in \mathcal{U} _{\lambda_1(\Omega)},
\ \
\left(\sqrt{\rho_1}\frac{\varphi_2^+}{|\varphi_2^+|_2},\sqrt{\rho_2}
\frac{\varphi_2^-}{|\varphi_2^-|_2}\right)\in \mathcal{U} _{\lambda_2(\Omega)}.$$
In particular, if $\varphi_1^2\geq \frac{e}{\min\{\rho_1,\rho_2\}},$ or
$\varphi_1^2<\frac{e}{\min\{\rho_1,\rho_2\}}$ and
$$\mu_1\rho_1(\log (\sqrt{\rho_1}\varphi_1)^{2}-1)
+\frac{4}{p+q}\rho_1^{\frac p2}\rho_2^{\frac q2}|\varphi_1|^{p-2}|\varphi_1|^{q}
+\mu_2\rho_2(\log(\sqrt{\rho_2}\varphi_1)^{2}-1)\geq0,\ \  i=1,2,$$
then
$$\hat c_{\lambda_1(\Omega)} \leq \frac{\rho_1+\rho_2}{2}\lambda_1(\Omega).$$
Moreover, if $\varphi_2^2\geq \frac{e}{\min\{\rho_1,\rho_2\}},$ or
$\varphi_2^2<\frac{e}{\min\{\rho_1,\rho_2\}}$ and
$$\mu_1\rho_1(\log (\sqrt{\rho_1}\varphi_2)^{2}-1)
+\frac{4}{p+q}\rho_1^{\frac p2}\rho_2^{\frac q2}|\varphi_2|^{p-2}|\varphi_2|^{q}
+\mu_2\rho_2(\log(\sqrt{\rho_2}\varphi_2)^{2}-1)\geq0,\ \  i=1,2,$$
then
$$\hat c_{\lambda_2(\Omega)} \leq \frac{\rho_1+\rho_2}{2}\lambda_2(\Omega).$$
\end{lemma}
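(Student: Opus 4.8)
The plan is to handle the two membership assertions and the two energy bounds in parallel, with essentially all the difficulty concentrated in controlling the sign of the logarithmic part of $\mathcal{E}$. First I would verify the two membership claims, which amount to checking the mass and gradient constraints defining $\mathcal{U}_\alpha$ in \eqref{eq:Balpha}. For the pair $(\sqrt{\rho_1}\varphi_1,\sqrt{\rho_2}\varphi_1)$ this is immediate: since $\varphi_1$ is $L^2$-normalized one has $|\sqrt{\rho_i}\varphi_1|_2^2=\rho_i$, and since $-\Delta\varphi_1=\lambda_1(\Omega)\varphi_1$ one has $|\nabla(\sqrt{\rho_i}\varphi_1)|_2^2=\rho_i\lambda_1(\Omega)$, so the total gradient equals $(\rho_1+\rho_2)\lambda_1(\Omega)$. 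For the pair built from $\varphi_2^{\pm}$ the mass constraint again holds by the explicit normalization, while the gradient constraint rests on the standard fact that the positive and negative parts of the sign-changing second eigenfunction each have Rayleigh quotient exactly $\lambda_2(\Omega)$. I would prove this by testing $-\Delta\varphi_2=\lambda_2(\Omega)\varphi_2$ against $\varphi_2^{\pm}\in H^1_0(\Omega)$, using $\nabla\varphi_2\cdot\nabla\varphi_2^{\pm}=|\nabla\varphi_2^{\pm}|^2$ and $\varphi_2\varphi_2^{\pm}=(\varphi_2^{\pm})^2$ to obtain $|\nabla\varphi_2^{\pm}|_2^2=\lambda_2(\Omega)|\varphi_2^{\pm}|_2^2$; dividing by $|\varphi_2^{\pm}|_2^2$ then shows both normalized parts contribute $\rho_i\lambda_2(\Omega)$ to the gradient, so the pair lies in $\mathcal{U}_{\lambda_2(\Omega)}$.

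Having established the memberships, I would bound $\hat c_{\lambda_i(\Omega)}=\inf_{\mathcal{U}_{\lambda_i(\Omega)}}\mathcal{E}$ from above by evaluating $\mathcal{E}$ at these admissible test pairs. In both cases the kinetic part equals exactly $\frac{\rho_1+\rho_2}{2}\lambda_i(\Omega)$ by the gradient computation above, so the claimed inequalities reduce to showing that the remaining (logarithmic plus coupling) integral is nonnegative. For the $\lambda_1$ pair I would factor $\varphi_1^2$ out of the integrand, so that its sign is governed by the bracket $\mu_1\rho_1(\log(\rho_1\varphi_1^2)-1)+\frac{4}{p+q}\rho_1^{p/2}\rho_2^{q/2}|\varphi_1|^{p+q-2}+\mu_2\rho_2(\log(\rho_2\varphi_1^2)-1)$ appearing in the hypothesis. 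On the set where $\varphi_1^2\ge e/\min\{\rho_1,\rho_2\}$ one has $\rho_i\varphi_1^2\ge e$, hence both logarithmic contributions are nonnegative and so is the coupling term, while on the complementary set the stated hypothesis is exactly what forces the bracket to remain nonnegative. For the $\lambda_2$ pair the coupling term drops out altogether, since $\varphi_2^{+}$ and $\varphi_2^{-}$ have disjoint supports, so only the two logarithmic integrals, each living on one nodal domain, need to be handled, again by the same dichotomy applied to $\varphi_2$.

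The step I expect to be the main obstacle is precisely this sign control of the logarithmic integral. The density $s^2(\log s^2-1)$ is negative for small $s$ and blows up logarithmically as $s\to 0$, which is unavoidable near $\partial\Omega$ where $\varphi_i\to 0$; the saving feature is that $s^2\log s^2\to 0$ as $s\to 0$ (cf. Lemma \ref{lemma:1}$(i)$), so the integrand is integrable and the threshold split at $e/\min\{\rho_1,\rho_2\}$ is what converts the pointwise hypotheses into nonnegativity of the whole integral. Once this is secured, the two displayed bounds on $\hat c_{\lambda_1(\Omega)}$ and $\hat c_{\lambda_2(\Omega)}$ follow immediately.
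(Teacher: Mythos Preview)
Your proposal is correct and follows essentially the same route as the paper: verify the memberships by direct computation (including the Rayleigh-quotient identity $|\nabla\varphi_2^{\pm}|_2^2=\lambda_2(\Omega)|\varphi_2^{\pm}|_2^2$), evaluate $\mathcal{E}$ at the test pairs so that the kinetic term becomes $\frac{\rho_1+\rho_2}{2}\lambda_i(\Omega)$, and then use the pointwise dichotomy together with Lemma~\ref{lemma:1} to force nonnegativity of the logarithmic-plus-coupling integral. Your observation that the coupling term $|u_1|^p|u_2|^q$ vanishes identically for the $\lambda_2$ test pair, because $\varphi_2^{+}$ and $\varphi_2^{-}$ have disjoint supports, is a correct simplification that the paper does not make explicit (there the term is simply carried along, which is harmless since it is nonnegative anyway).
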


\begin{proof}
By direct computations, the first assertion is direct. We claim that
$$\aligned\hat c_{\lambda_1(\Omega)}& \leq\mathcal{E}(\sqrt{\rho_1}\varphi_1,
\sqrt{\rho_2}\varphi_1)\\
&=\frac{\rho_1+\rho_2}{2}\lambda_1(\Omega)-\frac{1}{2}\int_\Omega\bigg(\mu_1(\sqrt{\rho_1}\varphi_1)^{2}(\log (\sqrt{\rho_1}\varphi_1)^{2}-1)\\
&\ \ \ \
+\mu_2(\sqrt{\rho_2}\varphi_1)^{2}(\log (\sqrt{\rho_2}\varphi_1)^{2}-1)+\frac{4}{p+q}|\sqrt{\rho_1}\varphi_1|^{p}|\sqrt{\rho_2}\varphi_1|^{q}\bigg),\\
&\leq\frac{\rho_1+\rho_2}{2}\lambda_1(\Omega).
\endaligned$$
Indeed, by assumption and Lemma \ref{lemma:1}, we obtain
$$ \int_\Omega\bigg(\mu_1(\sqrt{\rho_1}\varphi_1)^{2}(\log (\sqrt{\rho_1}\varphi_1)^{2}-1)
+\frac{4}{p+q}|\sqrt{\rho_1}\varphi_1|^{p} |\sqrt{\rho_2}\varphi_1|^{q}
+\mu_2(\sqrt{\rho_2}\varphi_1)^{2}(\log (\sqrt{\rho_2}\varphi_1)^{2}-1)\bigg)\geq0.$$
for every $\rho_1,\ \rho_2>0$.

On the other hand, we also get
$$\aligned\hat c_{\lambda_2(\Omega)}&\leq \mathcal{E}
\left(\sqrt{\rho_1}\frac{\varphi_2^+}{|\varphi_2^+|_{2}},\sqrt{\rho_2}\frac{\varphi_2^-}{|\varphi_2^-|_{2}}\right)\\
&\leq\frac{\rho_1+\rho_2}{2}\lambda_2(\Omega)-\frac{1}{2}\int_\Omega
\bigg(\mu_1\left(\sqrt{\rho_1}\frac{\varphi_2^+}{|\varphi_2^+|_{2}}\right)^{2}
\log \left(\left(\sqrt{\rho_1}\frac{\varphi_2^+}{|\varphi_2^+|_{2}}\right)^{2}-1\right)\\
&\ \ \ \ +\mu_2\left(\sqrt{\rho_2}\frac{\varphi_2^-}{|\varphi_2^-|_{2}}\right)^{2}
\log\left( \left(\sqrt{\rho_2}\frac{\varphi_2^-}{|\varphi_2^-|_{2}}\right)^{2}-1 \right)
+\frac{4}{p+q}\left|\sqrt{\rho_1}\frac{\varphi_2^+}{|\varphi_2^+|_{2}}\right|^{p}
\left|\sqrt{\rho_2}\frac{\varphi_2^-}{|\varphi_2^-|_{2}}\right|^{q}\bigg),\\
&\leq\frac{\rho_1+\rho_2}{2}\lambda_2(\Omega).\endaligned$$
\end{proof}

\begin{lemma}\label{thm:existence_bad_cond}
Let $\rho_1,\rho_2>0$ be such that
\begin{equation}\label{eq:mainassL}
\Lambda(\rho_1,\rho_2)\cdot(\rho_1+\rho_2)^{a-1}
\le\frac{(a-1)^{a-1}}{a^{a}}\lambda_j(\Omega)^{-(a-1)},
\end{equation}
where $j=1,2$ be as above in Lemma \ref{lem:estimate_c}. Let
$\bar \alpha=\frac{a}{a-1}\lambda_j(\Omega).$
$(\bar u_1,\bar u_2)\in \mathcal{B}_{\bar \alpha}\setminus \mathcal{U}_{\bar\alpha}$
 such that
$\mathcal{E}(\bar{u}_1,\bar{u}_2)=c_{\bar\alpha}$, which means that
$(\bar{u}_1,\bar{u}_2)$ is a local minimum of
$\mathcal{E}|_{\mathcal{M} }$, corresponding to a positive solution of \eqref{eq:1} for some
$(\omega_1,\omega_2)\in\R^2$.  Equivalently,
$(\rho_1,\rho_2)\in A$ as defined in \eqref{eq:defA}.
\end{lemma}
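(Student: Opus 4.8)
The plan is to produce a single level $\alpha$ at which $c_\alpha<\hat c_\alpha$, since by the definition \eqref{eq:defA} this already gives $(\rho_1,\rho_2)\in A$, and then to quote Lemma \ref{lem:c<hat_c} to turn this strict inequality into a genuine local minimizer solving \eqref{eq:1}. The natural candidate is the pair $\alpha_1=\lambda_j(\Omega)$ and $\alpha_2=\bar\alpha=\frac{a}{a-1}\lambda_j(\Omega)$, where $j\in\{1,2\}$ is whichever index is selected by the sign hypotheses of Lemma \ref{lem:estimate_c}. Since $a>1$ we have $\frac{a}{a-1}>1$, so $\bar\alpha>\lambda_j(\Omega)\ge\lambda_1(\Omega)$; hence $\lambda_1(\Omega)\le\alpha_1<\alpha_2$, exactly the admissibility required by Lemma \ref{lem:c<hat_c}. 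It therefore suffices to verify the two hypotheses of that lemma: the strict gap $\hat c_{\alpha_1}<\hat c_{\alpha_2}$, and (only in the critical case) the compactness threshold on $\alpha_2$.

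First I would bound the lower level from above by Lemma \ref{lem:estimate_c}, giving $\hat c_{\lambda_j(\Omega)}\le\frac{\rho_1+\rho_2}{2}\lambda_j(\Omega)$, and the upper level from below by applying Lemma \ref{lem:hatc_from_below} at $\alpha=\bar\alpha$:
$$\hat c_{\bar\alpha}>\frac12\Bigl((\rho_1+\rho_2)\bar\alpha-\Lambda(\rho_1,\rho_2)(\rho_1+\rho_2)^a\bar\alpha^a\Bigr).$$
The heart of the argument is then an elementary reduction showing that \eqref{eq:mainassL} is precisely the inequality making the lower bound exceed the upper bound. Writing $\rho=\rho_1+\rho_2$ and $\lambda=\lambda_j(\Omega)$, the desired $\frac{\rho}{2}\lambda\le\frac12(\rho\bar\alpha-\Lambda\rho^a\bar\alpha^a)$ is equivalent to $\Lambda\rho^a\bar\alpha^a\le\rho(\bar\alpha-\lambda)$; substituting $\bar\alpha=\frac{a}{a-1}\lambda$ (so $\bar\alpha-\lambda=\frac{\lambda}{a-1}$) and dividing by $\rho\lambda$ collapses this to $\Lambda(\rho_1,\rho_2)\,(\rho_1+\rho_2)^{a-1}\le\frac{(a-1)^{a-1}}{a^a}\lambda_j(\Omega)^{-(a-1)}$, which is exactly \eqref{eq:mainassL}. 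This choice of $\bar\alpha$ is in fact the value that maximizes $\alpha\mapsto\rho\alpha-\Lambda\rho^a\alpha^a$ when \eqref{eq:mainassL} holds with equality, which is why it yields the sharpest admissible mass condition. Chaining the three displays gives $\hat c_{\alpha_1}\le\frac{\rho}{2}\lambda<\hat c_{\alpha_2}$.

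In the Sobolev-critical case $p+q=2^*$ I must additionally check $\bar\alpha<\lambda_1(\Omega)+\Lambda^{-(N-2)/2}/(\rho_1+\rho_2)$, the threshold of Lemma \ref{lem:c<hat_c}, where now $a=\frac{2^*}{2}$, $\frac{1}{a-1}=\frac{N-2}{2}$, and $\Lambda$ is the constant \eqref{eq:def_Lambda}. Taking the $-\frac{1}{a-1}$ power in \eqref{eq:mainassL} yields
$$\frac{\Lambda^{-(N-2)/2}}{\rho_1+\rho_2}\ge\Bigl(\tfrac{a^a}{(a-1)^{a-1}}\Bigr)^{1/(a-1)}\lambda_j(\Omega)=\frac{a}{a-1}\,a^{1/(a-1)}\,\lambda_j(\Omega),$$
while $\bar\alpha-\lambda_1(\Omega)=\frac{a}{a-1}\lambda_j(\Omega)-\lambda_1(\Omega)<\frac{a}{a-1}\lambda_j(\Omega)$ since $\lambda_1(\Omega)>0$; because $a^{1/(a-1)}>1$, the right-hand side above strictly exceeds $\frac{a}{a-1}\lambda_j(\Omega)$, so the threshold holds automatically. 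With both hypotheses verified, Lemma \ref{lem:c<hat_c} yields $c_{\bar\alpha}<\hat c_{\bar\alpha}$ and a minimizer $(\bar u_1,\bar u_2)\in\mathcal{B}_{\bar\alpha}\setminus\mathcal{U}_{\bar\alpha}$; lying strictly inside the gradient ball, it is a local minimum of $\mathcal{E}|_{\mathcal M}$ and a positive solution of \eqref{eq:1} for suitable $(\omega_1,\omega_2)$, whence $(\rho_1,\rho_2)\in A$.

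The genuinely delicate point is not the algebra but the bookkeeping around $\Lambda$: in the subcritical regime $\Lambda$ in Lemma \ref{lem:hatc_from_below} carries its own $\alpha$-dependence, so I must make sure that the quantity $\Lambda(\rho_1,\rho_2)$ appearing in \eqref{eq:mainassL} is consistently the one evaluated at $\alpha=\bar\alpha$, and that the over-estimate of the $\frac{|\Omega|}{e}(\mu_1+\mu_2)$ term built into Lemma \ref{lem:hatc_from_below} only weakens the lower bound and hence costs nothing. Once that consistency is pinned down and the constraint $a>1$ (equivalently $p+q>2+\frac4N$, the regime in which only local minima are sought) is in force, every step is a direct substitution into the two preceding estimates.
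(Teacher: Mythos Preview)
Your proposal is correct and follows essentially the same route as the paper: upper-bound $\hat c_{\lambda_j(\Omega)}$ via Lemma \ref{lem:estimate_c}, lower-bound $\hat c_{\bar\alpha}$ via Lemma \ref{lem:hatc_from_below}, reduce the resulting inequality to \eqref{eq:mainassL} by the substitution $\bar\alpha=\frac{a}{a-1}\lambda_j(\Omega)$, verify the critical-case compactness threshold, and conclude by Lemma \ref{lem:c<hat_c}. The only cosmetic differences are that the paper frames the choice of $\bar\alpha$ as the maximizer of $\alpha\mapsto(\alpha-\lambda_j(\Omega))/\alpha^a$ (rather than of $\rho\alpha-\Lambda\rho^a\alpha^a$ at equality), and verifies the critical threshold by the chain $\Lambda(\rho_1+\rho_2)^{2/(N-2)}\le(\bar\alpha-\lambda_j)/\bar\alpha^{N/(N-2)}<(\bar\alpha-\lambda_1)^{-2/(N-2)}$ instead of your power-inversion using $a^{1/(a-1)}>1$; both computations are equivalent.
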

\begin{proof}
According to Lemma \ref{lem:c<hat_c}, our aim is to find $\bar{\alpha}>\lambda_j(\Omega)$ such that
\begin{equation}\label{eq:c<hat_c_app}
\hat{c}_{\lambda_j(\Omega)} < \hat{c}_{\bar{\alpha}}.
\end{equation}
Now, we look for a sufficient condition implying \eqref{eq:c<hat_c_app}.
By using Lemmas \ref{lem:hatc_from_below} and \ref{lem:estimate_c}, it is sufficient to find
$\bar{\alpha}>\lambda_j(\Omega)$ such that
$$\frac{\rho_1+\rho_2}{2} \lambda_j(\Omega)
\le \frac12\left(\bar{\alpha}(\rho_1+\rho_2) -\Lambda (\rho_1,\rho_2)
(\rho_1+\rho_2)^{a}\bar{\alpha}^{a} \right)$$
(by Lemma \ref{lem:hatc_from_below}, the right hand side is strictly less than $\hat c_{\bar{\alpha}}$).
Recall that $a=\frac{N(p+q-2)}{4}$, if $p+q>2+\frac 4N$, that is, $a>1$ and
\begin{equation}\label{eq:passa}
\Lambda(\rho_1,\rho_2)(\rho_1+\rho_2)^{a-1}\le\frac{\bar{\alpha}-
\lambda_j(\Omega)}{\bar{\alpha}^{a}}.
\end{equation}
Furthermore, if $2<p+q\leq 2+\frac 4N$, i.e., $0<a\leq 1$.
It is still possible to get \eqref{eq:passa} established.
Therefore, by a direct computation,
the best possible choice for the right hand side is
$$\max_{\alpha\ge\lambda_j(\Omega)}\frac{\alpha-\lambda_j(\Omega)}{\alpha^{a}}
=\frac{(a-1)^{a-1}}{a^{a}}\lambda_j(\Omega)^{-(a-1)},
\ \ \text{achieved by}\ \ \bar{\alpha}=\frac{a}{a-1}\lambda_j(\Omega).$$
This choice of $\bar{\alpha}$ is possible, since it makes \eqref{eq:passa}
equivalent to \eqref{eq:mainassL}, the assumption
of the lemma. Furthermore, it is clear that $\bar{\alpha}>\lambda_j(\Omega)$. Therefore, in order to apply
Lemma \ref{lem:c<hat_c} and conclude the
proof, we only need to check that,  the additional assumption
\begin{equation}\label{eq:passa2}
\bar\alpha<\lambda_1(\Omega)+\frac{1}{\Lambda ^{\frac{N-2}{2}}(\rho_1+\rho_2)}
\end{equation}
holds true. This is straightforward since, being $a=\frac{2^*}{2}$, it follows from
\eqref{eq:passa} that
$$\Lambda(\rho_1+\rho_2)^{\frac{2}{N-2}}\le\frac{\bar{\alpha}-
\lambda_j(\Omega)}{\bar{\alpha}^{\frac{N}{N-2}}}
<\frac{\bar{\alpha}-\lambda_1(\Omega)}{(\bar{\alpha}-\lambda_1(\Omega))^{\frac{N}{N-2}}}
=\frac{1}{(\bar{\alpha}- \lambda_1(\Omega))^{\frac{2}{N-2}}},$$
which is equivalent to \eqref{eq:passa2}.
\end{proof}

\begin{remark}
When $\rho_1=\rho_2,\ \mu_1=\mu_2,\ p=q,\ \omega_1=\omega_2$,
if $(\bar u_1,\bar u_2)$ solves \eqref{eq:1},
 the solution $(\bar u_1,\bar u_2)$
does not coincide with $(\sqrt{\rho_1}\varphi_1,\sqrt{\rho_2}\varphi_1)$. Indeed, this last pair solves
\eqref{eq:1} $\Longleftrightarrow $
$$\left\{\aligned\ &\ (\lambda_1(\Omega)+\omega_1)\sqrt{\rho_1}\varphi_1
=\mu_1(\sqrt{\rho_1}\varphi_1)^2\log(\sqrt{\rho_1}\varphi_1)^2+\frac{2p}{p+q}|\sqrt{\rho_2}\varphi_1|^{q}|\sqrt{\rho_1}\varphi_1|^{p-2}\sqrt{\rho_1}\varphi_1,\\
\ &\ (\lambda_2(\Omega)+\omega_2)\sqrt{\rho_2}\varphi_1=\mu_2(\sqrt{\rho_2}\varphi_1)^2\log(\sqrt{\rho_2}\varphi_1)^2
+\frac{2q}{p+q}|\sqrt{\rho_1}\varphi_1|^{p}|\sqrt{\rho_2}\varphi_1|^{q-2}\sqrt{\rho_2}\varphi_1.\endaligned\right.$$
$$\Longleftrightarrow $$
$$\left\{\aligned\ &\ (\lambda_1(\Omega)+\omega_1)
=\mu_1\sqrt{\rho_1}\varphi_1(\log \rho_1+\log\varphi_1^2)
+\frac{2p}{p+q}|\sqrt{\rho_2}\varphi_1|^{q}|\sqrt{\rho_1}\varphi_1|^{p-2},\\
\ &\ (\lambda_2(\Omega)+\omega_2)=\mu_2\sqrt{\rho_2}\varphi_1
(\log \rho_2+\log\varphi_1^2)
+\frac{2q}{p+q}|\sqrt{\rho_1}\varphi_1|^{p}|\sqrt{\rho_2}\varphi_1|^{q-2}.\endaligned\right.$$
Therefore, $\rho_1=\rho_2=0,\ \lambda_i(\Omega)=-\omega_i,\ i=1,2$.
\end{remark}

Following the method of \cite{Nonlinearity,ntvDCDS}, we extend the strategy of studying Sobolev critical case of
nonlinear Schr\"{o}dinger system to Br\'{e}zis-Nirenberg problem with logarithmic perturbation. In this
section it is convenient to make explicit the dependence of some quantities with respect to
$\rho_1,\rho_2$: in view of this, we write $c_\alpha(\rho_1,\rho_2)$, $\hat c_\alpha(\rho_1,\rho_2)$,
$\mathcal{B} _\alpha(\rho_1,\rho_2)$, $\mathcal{U}_\alpha(\rho_1,\rho_2)$.  For shorter notation,
we define
$$G(u_1,u_2):=\int_\Omega\bigg(\mu_1u_1^{2}
(\log u_1^{2}-1)+\frac{4}{p+q}|u_1|^{p}|u_2|^{q}
+\mu_2 u_1^{2}(\log u_1^{2}-1)\bigg),$$
and introduce the optimization problem
$$M_\alpha(\rho_1,\rho_2):=\sup_{\mathcal{U}_\alpha(\rho_1,\rho_2)}G.$$
Observe that,
$$\hat c_\alpha(\rho_1,\rho_2)=\frac{1}{2}\left(\alpha(\rho_1+\rho_2)-\frac{1}{p+1}
M_\alpha(\rho_1,\rho_2)\right),$$
and that $\hat c_\alpha(\rho_1,\rho_2)$ is achieved at $(u_1,u_2)\in
\mathcal{U}_\alpha(\rho_1,\rho_2)\Longleftrightarrow M_\alpha(\rho_1,\rho_2)$ is
achieved at  $(u_1,u_2)$.

\vskip0.12in

Fix, if every, $(\rho_1,\rho_2)\in A\setminus\{(0,0)\}$. By
definition of $A$, there exist
$\alpha>\lambda_1(\Omega)$ and $(\bar{u}_1,\bar{u}_2)\in\mathcal{B}_\alpha$,
a solution of \eqref{eq:1}, such that
$\mathcal{E} (\bar u_1,\bar u_2)=c_{\alpha}< \hat c_{\alpha}$, and $\alpha$ satisfies
\eqref{eq:compact_intro} in case $p+q=2^*$.
Note that the assumption $c_{\alpha}<\hat c_{\alpha}$
implies
$$\int_\Omega(|\nabla \bar{u}_1|^2+|\nabla\bar{u}_2|)^2<(\rho_1+\rho_2)\alpha,\ \ \text{ so that}\ \  \bar{\alpha}:=\frac{1}{\rho_1+\rho_2}\int_\Omega(|\nabla
\bar{u}_1|^2+|\nabla \bar{u}_2|^2)<\alpha.$$
As a consequence, $(\bar u_1,\bar u_2)\in\mathcal{U}_{\bar{\alpha}}(\rho_1,\rho_2)$
achieves $\hat c_{\bar{\alpha}}=c_\alpha$.

\begin{lemma}\label{lem:starshap1}
If $s>0$ then $(s\bar u_1,s\bar u_2)\in\mathcal{U}_{\bar{\alpha}}
(s^2\rho_1,s^2\rho_2)$ achieves
$$\hat c_{\bar\alpha}(s^2 \rho_1,s^2\rho_2)=\frac{s^2}2\bar{\alpha}
(\rho_1+\rho_2)-\frac{1}{p+1}G(s\bar u_1,s\bar u_2).$$
\end{lemma}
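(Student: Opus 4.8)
The plan is to split the statement into three pieces — membership in $\mathcal{U}_{\bar\alpha}(s^2\rho_1,s^2\rho_2)$, the value identity, and the minimality (``achieves'') assertion — since the first two are purely computational while the last carries the real content. For membership I would check the two quadratic constraints directly: by homogeneity $\int_\Omega(s\bar u_i)^2=s^2\rho_i$, so $(s\bar u_1,s\bar u_2)\in\mathcal{M}_{s^2\rho_1,s^2\rho_2}$, and $\int_\Omega(|\nabla(s\bar u_1)|^2+|\nabla(s\bar u_2)|^2)=s^2\bar\alpha(\rho_1+\rho_2)=\bar\alpha(s^2\rho_1+s^2\rho_2)$, which is exactly the gradient identity defining $\mathcal{U}_{\bar\alpha}(s^2\rho_1,s^2\rho_2)$. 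I would also record that $(s\bar u_i)^2\log(s\bar u_i)^2=s^2\bar u_i^2\log\bar u_i^2+(s^2\log s^2)\bar u_i^2\in L^1(\Omega)$, so the scaled pair stays in $\mathcal{H}(\Omega)$. The value formula then follows by substituting into $\mathcal{E}$ and using that the gradient term equals $\tfrac{s^2}{2}\bar\alpha(\rho_1+\rho_2)$ on this sphere, once $G(s\bar u_1,s\bar u_2)$ is recorded.

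For minimality I would introduce the dilation $T_s(v_1,v_2):=(sv_1,sv_2)$, which by the computation above is a bijection of $\mathcal{U}_{\bar\alpha}(\rho_1,\rho_2)$ onto $\mathcal{U}_{\bar\alpha}(s^2\rho_1,s^2\rho_2)$ with inverse $T_{1/s}$. Hence $\hat c_{\bar\alpha}(s^2\rho_1,s^2\rho_2)=\inf_{v\in\mathcal{U}_{\bar\alpha}(\rho_1,\rho_2)}\mathcal{E}(T_s v)$, and it suffices to show that $(\bar u_1,\bar u_2)$ minimizes $v\mapsto\mathcal{E}(T_s v)$. Since the gradient term is constant on $\mathcal{U}_{\bar\alpha}(\rho_1,\rho_2)$, this is equivalent to maximizing $G(T_s v)$, and expanding the logarithm gives
$$G(sv_1,sv_2)=s^2\tilde G(v_1,v_2)+(s^2\log s^2)(\mu_1\rho_1+\mu_2\rho_2)+s^{p+q}\tfrac{4}{p+q}\int_\Omega|v_1|^p|v_2|^q,$$
where $\tilde G(v_1,v_2)=\int_\Omega(\mu_1 v_1^2(\log v_1^2-1)+\mu_2 v_2^2(\log v_2^2-1))$. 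The middle term is constant on $\mathcal{U}_{\bar\alpha}(\rho_1,\rho_2)$ because the masses are fixed, so the logarithm — despite its apparent obstruction — contributes nothing to the optimization; this is the one simplification I would exploit.

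The hard part is the residual scaling mismatch: the quadratic-logarithmic block scales like $s^2$ while the coupling scales like $s^{p+q}$, so up to the additive constant $G(T_s v)=s^2 G(v)+(s^{p+q}-s^2)\tfrac{4}{p+q}\int_\Omega|v_1|^p|v_2|^q$. Because $(\bar u_1,\bar u_2)$ is only known to maximize $G$ on $\mathcal{U}_{\bar\alpha}(\rho_1,\rho_2)$, its optimality does not transfer automatically to this rescaled functional for $s\neq1$. I would confine attention to the range $0<s\le1$ relevant for star-shapedness with respect to $(0,0)$, where $s^{p+q}-s^2\le0$, and combine the sign of this factor with the fact that $(\bar u_1,\bar u_2)$ solves \eqref{eq:1}: testing the system against $(\bar u_1,\bar u_2)$ yields an identity relating $\int_\Omega|\bar u_1|^p|\bar u_2|^q$ to the gradient, logarithmic and Lagrange-multiplier data, which I would use to control the coupling integral and conclude that $T_s(\bar u_1,\bar u_2)$ still maximizes $G$. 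I expect this to be the delicate step; as a robust first reduction I would secure the competitor bound $\hat c_{\bar\alpha}(s^2\rho_1,s^2\rho_2)\le\mathcal{E}(s\bar u_1,s\bar u_2)$, which holds unconditionally and is what the subsequent star-shapedness estimate actually consumes, and then refine it into the full equality.
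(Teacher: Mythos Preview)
Your membership and value-formula computations are exactly what the paper records: it simply notes the bijection $(u_1,u_2)\in\mathcal{U}_{\bar\alpha}(\rho_1,\rho_2)\Longleftrightarrow(su_1,su_2)\in\mathcal{U}_{\bar\alpha}(s^2\rho_1,s^2\rho_2)$, writes out $G(su_1,su_2)$, and declares ``then the lemma follows.'' You have gone further and correctly identified the point the paper glosses over: the pullback of the optimization through $T_s$ is \emph{not} equivalent to maximizing $G$ at the original level, because here $G$ mixes a quadratic-logarithmic block and a $(p+q)$-homogeneous coupling. In the Noris--Tavares--Verzini model this lemma is quoted from, $G$ is homogeneous of a single degree and the bijection argument is complete; with the logarithmic term the ``direct computation'' the paper appeals to does not, by itself, yield the ``achieves'' assertion. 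So you have spotted a genuine soft spot in the paper's own proof.

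That said, your two proposed repairs do not close the gap. First, testing the Euler--Lagrange system against $(\bar u_1,\bar u_2)$ gives an identity only at the minimizer, not a comparison against an arbitrary competitor $(v_1,v_2)\in\mathcal{U}_{\bar\alpha}(\rho_1,\rho_2)$; there is no evident way to convert that single scalar relation into the inequality $G(T_s\bar u)\ge G(T_s v)$ for all $v$, since the obstructing term $(s^{p+q}-s^2)\int|v_1|^p|v_2|^q$ varies freely over $\mathcal{U}_{\bar\alpha}$ and is not controlled by $G(v)$ alone. Second, your fallback --- ``the competitor bound $\hat c_{\bar\alpha}(s^2\rho_1,s^2\rho_2)\le\mathcal{E}(s\bar u_1,s\bar u_2)$ is what the subsequent star-shapedness estimate actually consumes'' --- is not correct: the proof of the star-shapedness lemma reads $\hat c_{\bar\alpha-\varepsilon}\le\mathcal{E}(U(\tau))<\mathcal{E}(U(0))=\hat c_{\bar\alpha}$ and then invokes the comparison lemma with $\hat c_{\alpha_1}<\hat c_{\alpha_2}$. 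With only the inequality $\mathcal{E}(U(0))\ge\hat c_{\bar\alpha}$ that chain does not give $\hat c_{\bar\alpha-\varepsilon}<\hat c_{\bar\alpha}$. So the full equality is genuinely used downstream, and neither the paper's one-line argument nor your sketch establishes it.
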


\begin{proof}
By a direct computation, this follows by noticing that
$$(u_1,u_2)\in\mathcal{U}_{\bar\alpha}(\rho_1,\rho_2)\Leftrightarrow
(su_1,su_2)\in\mathcal{U}_{\bar\alpha}(s^2\rho_1,s^2\rho_2),$$
with
$$G(su_1,su_2)=\int_\Omega\bigg(\mu_1 s^2u_1^{2}
(\log s^2u_1^{2}-1)+\frac{4}{p+q}s^{p+q}|u_1|^{p}|u_2|^{q}
+\mu_2 s^2u_2^{2}(\log s^2u_2^{2}-1)\bigg).$$
Then  the lemma follows.
\end{proof}

\begin{lemma}\label{lem:curvetta}
Let $s\in(0,1)$, $|t|$ small,
$$(U_1(t),U_2(t)):=\bigg(s\sqrt{\rho_1}
\frac{\bar u_1+t\tau_1}{|\bar u_1+t\tau_1|_2} ,
s\sqrt{\rho_2}\frac{\bar u_2+t\tau _2}{|\bar u_2+t\tau_2|_2}\bigg),$$
where $(\tau_1,\tau_2)\in H^1_0(\Omega,\R^2)$ be such that
$$\int_\Omega \bar u_1\tau_1=\int_\Omega \bar u_2\tau_2=0,\ \
 \int_\Omega (\nabla \bar u_1 \nabla \tau_1+\nabla\bar u_2\nabla\tau_2)<0, $$
and
$$\aligned \ &\ \int_\Omega
\bigg(\frac{2p}{p+q}s^{p+q-1}|\bar u_1|^{p-1}\tau_1
+\frac{2q}{p+q}s^{p+q-1}|\bar u_2|^{q-1}\tau_2+\tau_1\log s^2\bar u_1^2+\tau_2\log s^2\bar u_2^2\bigg)>0.\endaligned$$
Then $(U_1(t),U_2(t))\in\mathcal{M} _{s^2\rho_1,s^2\rho_2}$
for every $t$ and
$$\frac{d}{dt}\|(U_1(t),U_2(t))\|^2_{H^1_0(\Omega)}\bigg|_{t=0}<0,\ \ \   \ \frac{d}{dt}
\mathcal{E} (U_1(t),U_2(t))\bigg|_{t=0}<0.$$
\end{lemma}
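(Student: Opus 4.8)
The plan is to handle the three assertions in the order stated, exploiting that the curve $t\mapsto(U_1(t),U_2(t))$ is obtained by $L^2$-renormalizing $\bar u_i+t\tau_i$, so that almost everything reduces to elementary one-variable differentiation of the quotients together with one genuinely delicate point for the logarithmic integrand. Write $h_i(t):=|\bar u_i+t\tau_i|_2$; since $h_i(0)=\sqrt{\rho_i}>0$ the denominators stay positive for $|t|$ small. The membership claim is then immediate and needs no derivatives: by construction $|U_i(t)|_2^2=s^2\rho_i\,h_i(t)^{-2}|\bar u_i+t\tau_i|_2^2=s^2\rho_i$ for all such $t$, so $(U_1(t),U_2(t))\in\mathcal M_{s^2\rho_1,s^2\rho_2}$ automatically, the constraint being built into the normalization.

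Next I would record two derivatives that drive the remaining two claims. Because $\int_\Omega\bar u_i\tau_i=0$, the function $h_i^2(t)=|\bar u_i+t\tau_i|_2^2$ satisfies $(h_i^2)'(0)=2\int_\Omega\bar u_i\tau_i=0$ and $h_i^2(0)=\rho_i$; applying the quotient rule to $U_i(t)=s\sqrt{\rho_i}\,(\bar u_i+t\tau_i)/h_i(t)$ gives $U_i(0)=s\bar u_i$ and $U_i'(0)=s\tau_i$. For the Dirichlet norm, $|\nabla U_i(t)|_2^2=s^2\rho_i\,|\nabla(\bar u_i+t\tau_i)|_2^2/h_i^2(t)$, and differentiating at $t=0$ the vanishing of $(h_i^2)'(0)$ removes the denominator's contribution, leaving $\frac{d}{dt}|\nabla U_i(t)|_2^2\big|_{t=0}=2s^2\int_\Omega\nabla\bar u_i\nabla\tau_i$. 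Summing over $i=1,2$ and invoking the hypothesis $\int_\Omega(\nabla\bar u_1\nabla\tau_1+\nabla\bar u_2\nabla\tau_2)<0$ yields the second assertion, namely $\frac{d}{dt}\|(U_1(t),U_2(t))\|^2_{H^1_0(\Omega)}\big|_{t=0}=2s^2\int_\Omega(\nabla\bar u_1\nabla\tau_1+\nabla\bar u_2\nabla\tau_2)<0$.

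For the energy I would split $\mathcal E(U_1,U_2)=\frac12\|(U_1,U_2)\|^2_{H^1_0(\Omega)}-\frac12 G(U_1,U_2)$ and differentiate each piece at $t=0$. The gradient part is already computed and is strictly negative. The crucial simplification for $G$ is that the logarithmic integrand differentiates cleanly: $\frac{d}{dt}\big[U_i^2(\log U_i^2-1)\big]=2U_iU_i'\log U_i^2$, the constant $-1$ being exactly what cancels the spurious term coming from $\frac{d}{dt}\log U_i^2$. Evaluating at $t=0$ with $U_i(0)=s\bar u_i$, $U_i'(0)=s\tau_i$, and adding the derivative of the coupling term $\frac{4}{p+q}\int_\Omega|U_1|^p|U_2|^q$ (where positivity of $\bar u_i$, which may be assumed since $\mathcal E$ and the constraints depend only on $|u_i|$ and the strong maximum principle applies, removes any sign ambiguity in $|U_i|^{p-1}$), one obtains $\frac{d}{dt}G(U_1,U_2)\big|_{t=0}$ as a sum of precisely the integrals collected in the third hypothesis, weighted by $s^2$ for the logarithmic contributions and $s^{p+q}$ for the coupling contribution. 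That hypothesis is tailored so that $\frac{d}{dt}G(U_1,U_2)\big|_{t=0}\ge 0$, whence $-\frac12\frac{d}{dt}G\big|_{t=0}\le 0$; combined with the strictly negative gradient part this gives $\frac{d}{dt}\mathcal E(U_1(t),U_2(t))\big|_{t=0}<0$, the third assertion.

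The main obstacle is justifying differentiation under the integral sign for $\int_\Omega U_i(t)^2(\log U_i(t)^2-1)$, since the functional $\mathcal K$ is not $C^1$ on $H^1_0(\Omega)$ and the integrand's derivative $2U_iU_i'\log U_i^2$ is unbounded where $U_i$ degenerates. I would settle this by dominated convergence applied to the difference quotients: for $|t|$ small the $h_i(t)$ range over a compact subinterval of $(0,\infty)$, so $|U_i(t)|\le C(|\bar u_i|+|\tau_i|)$ and $|U_i'(t)|\le C(|\bar u_i|+|\tau_i|)$ pointwise, while the bounds of Lemma \ref{lemma:1}, separating $\{|U_i|\le1\}$ from $\{|U_i|>1\}$, give $|U_i\log U_i^2|\le C(1+|U_i|^{2^*-1})$. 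Hence the integrands are dominated by $C(1+|U_i|^{2^*-1})(|\bar u_i|+|\tau_i|)\le C\big((|\bar u_i|+|\tau_i|)+(|\bar u_i|+|\tau_i|)^{2^*}\big)$, which lies in $L^1(\Omega)$ because $\bar u_i,\tau_i\in H^1_0(\Omega)\hookrightarrow L^{2^*}(\Omega)$ and $\Omega$ is bounded. The coupling and gradient terms are genuinely smooth in $t$ near $0$, so their termwise differentiation requires no further care, completing the argument.
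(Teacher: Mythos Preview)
Your argument is correct and follows essentially the same route as the paper: you verify the mass constraint by construction, use the orthogonality $\int_\Omega\bar u_i\tau_i=0$ to obtain $U_i'(0)=s\tau_i$, and then differentiate the Dirichlet norm and $G$ termwise at $t=0$, combining the sign hypotheses to conclude. The one place where you go beyond the paper is the explicit dominated-convergence justification for differentiating $\int_\Omega U_i^2(\log U_i^2-1)$; the paper simply computes formally, whereas you supply the $L^1$ majorant via Lemma~\ref{lemma:1}, which is a genuine (and appropriate) addition given that $\mathcal K$ is not $C^1$ on $H^1_0(\Omega)$. One small wording issue: you describe $\frac{d}{dt}G\big|_{t=0}$ as the hypothesis integral with \emph{different} weights $s^2$ and $s^{p+q}$, but what actually makes the argument work is that the hypothesis is (intended to be) a positive scalar multiple of $\frac{d}{dt}G\big|_{t=0}$ itself, so that its strict positivity transfers directly; phrasing it as two distinct weights would not by itself let you pass from positivity of a sum to positivity of a differently weighted sum.
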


\begin{proof}
By a simple computation,
$$ \int_{\Omega}\left(s\sqrt{\rho_i}
\frac{ \bar u_i+t\tau_i }{| \bar u_i+t\tau_i |_2}\right)^2=s^2\rho_i,\ \ i=1,2,$$
we have that
$(U_1(t),U_2(t))\in\mathcal{M} _{s^2\rho_1,s^2\rho_2}$
for every $t$, and that
$$\frac{d}{dt}(U_1(t),U_2(t))\bigg|_{t=0}=(s\tau_1,s\tau_2).$$
Then, by assumption we get
$$\frac{d}{dt}\|(U_1(t),U_2(t))\|^2_{H^1_0(\Omega)}\bigg|_{t=0}
=2s^2\int_\Omega(\nabla \bar u_1\nabla \tau_1+\nabla\bar u_2\nabla\tau_2)<0.$$

Furthermore,
$$\aligned\ &\ \frac{d}{dt}G(U_1(t),U_2(t))\bigg|_{t=0}\\
\ =&\ \frac{d}{dt}\int_\Omega\bigg(\mu_1 U_1(t)^{2}
(\log U_1(t)^{2}-1)+\frac{4}{p+q}|U_1(t)|^{p}
|U_2(t)|^{q}+\mu_2 U_1(t)^{2}(\log U_1(t)^2-1)\bigg)\bigg|_{t=0}\\
\ =&\  2s\int_\Omega
\bigg(\frac{2p}{p+q}s^{p+q-1}|\bar u_1|^{p-1}\tau_1
+\frac{2q}{p+q}s^{p+q-1}|\bar u_2|^{q-1}\tau_2+\tau_1\log s^2\bar u_1^2+\tau_2\log s^2\bar u_2^2\bigg)>0.\endaligned$$
Therefore, for any $0<s<1$ and $p,\ q>1$, we have
$$\aligned\ &\ \frac{d}{dt}\mathcal{E}(U_1(t),U_2(t))\bigg|_{t=0}\\
\ =&\ s^2\int_\Omega(\nabla \bar u_1\nabla \tau_1+\nabla\bar u_2\nabla\tau_2)\\
\ &\ -s\int_\Omega\bigg(\frac{2p}{p+q}s^{p+q-1}|\bar u_1|^{p-1}\tau_1
+\frac{2q}{p+q}s^{p+q-1}|\bar u_2|^{q-1}\tau_2+\tau_1\log s^2\bar u_1^2+\tau_2\log s^2\bar u_2^2\bigg)\\
\ <&\ 0\endaligned$$
by assumption.
\end{proof}

\begin{lemma}\label{prop_starshaped}
The set $A$ is star-shaped with respect to $(0,0)$, where $A$ is defined in \eqref{eq:defA}.
\end{lemma}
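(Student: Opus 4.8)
The plan is to reduce the star-shaped property to a single scaling statement: it suffices to prove that whenever $(\rho_1,\rho_2)\in A\setminus\{(0,0)\}$, one has $(s^2\rho_1,s^2\rho_2)\in A$ for every $s\in(0,1)$. Indeed, as $s$ ranges over $(0,1]$ the point $(s^2\rho_1,s^2\rho_2)$ traces the entire segment joining $(0,0)$ to $(\rho_1,\rho_2)$, and $(0,0)\in A$ by definition. So I fix $(\rho_1,\rho_2)\in A\setminus\{(0,0)\}$ and recall, from the discussion preceding Lemma \ref{lem:starshap1}, that there are $\alpha>\lambda_1(\Omega)$ and a solution $(\bar u_1,\bar u_2)\in\mathcal{B}_\alpha$ of \eqref{eq:1} with $\mathcal{E}(\bar u_1,\bar u_2)=c_\alpha<\hat c_\alpha$; setting $\bar\alpha:=\frac{1}{\rho_1+\rho_2}\int_\Omega(|\nabla\bar u_1|^2+|\nabla\bar u_2|^2)$ one has $\lambda_1(\Omega)\le\bar\alpha<\alpha$ and $(\bar u_1,\bar u_2)\in\mathcal{U}_{\bar\alpha}(\rho_1,\rho_2)$ achieves $\hat c_{\bar\alpha}=c_\alpha$.

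Next I fix $s\in(0,1)$ and work with the scaled pair $(s\bar u_1,s\bar u_2)$. By Lemma \ref{lem:starshap1}, $(s\bar u_1,s\bar u_2)\in\mathcal{U}_{\bar\alpha}(s^2\rho_1,s^2\rho_2)$ and it achieves $\hat c_{\bar\alpha}(s^2\rho_1,s^2\rho_2)$. The target is the strict inequality $c_{\bar\alpha}(s^2\rho_1,s^2\rho_2)<\hat c_{\bar\alpha}(s^2\rho_1,s^2\rho_2)$, which is exactly what certifies $(s^2\rho_1,s^2\rho_2)\in A$, with the admissible level $\alpha'=\bar\alpha$. To produce it I invoke Lemma \ref{lem:curvetta}: the curve $(U_1(t),U_2(t))$ starts at $(s\bar u_1,s\bar u_2)$ and satisfies $\frac{d}{dt}\|(U_1(t),U_2(t))\|^2_{H^1_0(\Omega)}\big|_{t=0}<0$ together with $\frac{d}{dt}\mathcal{E}(U_1(t),U_2(t))\big|_{t=0}<0$. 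The first inequality forces $(U_1(t),U_2(t))$ into the interior $\mathcal{B}_{\bar\alpha}(s^2\rho_1,s^2\rho_2)\setminus\mathcal{U}_{\bar\alpha}(s^2\rho_1,s^2\rho_2)$ for small $t>0$, while the second gives $\mathcal{E}(U_1(t),U_2(t))<\mathcal{E}(s\bar u_1,s\bar u_2)=\hat c_{\bar\alpha}(s^2\rho_1,s^2\rho_2)$; hence $c_{\bar\alpha}(s^2\rho_1,s^2\rho_2)\le\mathcal{E}(U_1(t),U_2(t))<\hat c_{\bar\alpha}(s^2\rho_1,s^2\rho_2)$.

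Before concluding I must verify that $\bar\alpha$ is admissible for the new masses, in particular the compactness restriction \eqref{eq:compact_intro} in the critical case $p+q=2^*$. This is automatic: since $\Lambda$ is independent of the masses when $p+q=2^*$, and $s<1$, $\bar\alpha<\alpha$, I obtain $s^2(\rho_1+\rho_2)(\bar\alpha-\lambda_1(\Omega))<(\rho_1+\rho_2)(\alpha-\lambda_1(\Omega))\le\Lambda^{-\frac{N-2}{2}}$, so $\bar\alpha$ satisfies \eqref{eq:compact_intro} for $(s^2\rho_1,s^2\rho_2)$ and $c_{\bar\alpha}(s^2\rho_1,s^2\rho_2)$ is achieved by Lemma \ref{prop:Sobcritconv}.

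The main obstacle is supplying, for each $s\in(0,1)$, an admissible perturbation $(\tau_1,\tau_2)$ meeting \emph{all} hypotheses of Lemma \ref{lem:curvetta}: the orthogonality $\int_\Omega\bar u_i\tau_i=0$, the gradient descent condition $\int_\Omega(\nabla\bar u_1\nabla\tau_1+\nabla\bar u_2\nabla\tau_2)<0$, and the positivity of the mixed logarithmic–coupling integral. These amount to prescribing strict signs for two linear functionals on the codimension-two subspace $W=\{\tau\in H^1_0(\Omega,\R^2):\int_\Omega\bar u_i\tau_i=0\}$, and such $\tau$ fails to exist only if the two functionals are nonnegatively proportional on $W$. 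Using that $(\bar u_1,\bar u_2)$ solves \eqref{eq:1}, I would rewrite the gradient functional through the Euler–Lagrange identity (the $\omega_i\int_\Omega\bar u_i\tau_i$ terms vanishing because $\tau\in W$) and compare it with the mixed functional. The key is their different $s$-dependence: the gradient functional is $s$-independent, whereas the coupling part scales like $s^{p+q-1}$ and the logarithm acquires the additive shift $\log s^2$, so the proportionality that enforces the optimality of $(\bar u_1,\bar u_2)$ at $s=1$ genuinely breaks for $s\in(0,1)$. Establishing this non-degeneracy rigorously — that the descent cone is nonempty precisely when $s<1$ — is the crux, and is exactly where the non-homogeneity of $u\log u^2$ is exploited.
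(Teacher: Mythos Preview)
Your argument follows the paper's own proof essentially step for step: both rely on Lemma~\ref{lem:starshap1} to identify $(s\bar u_1,s\bar u_2)$ as a minimizer for $\hat c_{\bar\alpha}(s^2\rho_1,s^2\rho_2)$, and then on the deformation of Lemma~\ref{lem:curvetta} to produce a nearby competitor with strictly smaller norm and strictly smaller energy. The only cosmetic difference is in the last step: the paper lands in $\mathcal U_{\bar\alpha-\varepsilon}$, obtains $\hat c_{\bar\alpha-\varepsilon}<\hat c_{\bar\alpha}$, and invokes Lemma~\ref{lem:c<hat_c} with $\alpha_1=\bar\alpha-\varepsilon$, $\alpha_2=\bar\alpha$; you instead observe directly that the deformed pair lies in $\mathcal B_{\bar\alpha}\setminus\mathcal U_{\bar\alpha}$, giving $c_{\bar\alpha}<\hat c_{\bar\alpha}$ without the detour. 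Both routes are valid and equivalent.

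As for the point you flag as ``the main obstacle'' --- the existence of an admissible perturbation $(\tau_1,\tau_2)$ satisfying all three sign conditions of Lemma~\ref{lem:curvetta} --- the paper's proof does not address it either: it simply writes ``According to the notation of Lemma~\ref{lem:curvetta}'' and proceeds. So your proposal is at least as complete as the paper's own argument; you are in fact more candid about where the verification is being taken for granted.
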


\begin{proof}
According to the notation of Lemma \ref{lem:curvetta}, we see that
$$(U_1(0),U_2(0))=(s\bar u_1,s\bar u_2)\in\mathcal{U}_{\bar\alpha}
(s^2\rho_1,s^2\rho_2),$$
then there
exist positive and small constants $\varepsilon,\tau$ such that
$$(U_1(\tau ),U_2(\tau))\in\mathcal{U} _{\bar\alpha-\varepsilon}(s^2\rho_1,s^2\rho_2).$$
It follows from Lemmas \ref{lem:starshap1} and \ref{lem:curvetta} that
$$\hat{c}_{\bar{\alpha}-\varepsilon}(s^2\rho_1,s^2\rho_2) \le \mathcal{E}
(U_1(\tau),U_2(\tau))<\mathcal{E}(U_1(0),U_2(0))=\hat{c}_{\bar{\alpha}}
(s^2\rho_1,s^2\rho_2).$$
Then we can apply Lemma
\ref{lem:c<hat_c}, with $\alpha_1=\bar\alpha-\varepsilon$ and $\alpha_2=\bar\alpha$,
obtaining that $(s^2\rho_1,s^2\rho_2)\in A$. Since this holds true for any $s\in(0,1)$.
This completes the proof.
\end{proof}

Next, we give an explicit estimate  for $\Lambda $. At this point, the main
assumption in Lemma \ref{thm:existence_bad_cond} is written in terms of the
function $\Lambda(\rho_1,\rho_2)$ defined in \eqref{eq:defdd}.
For the reader's convenience, recall that
$$\aligned \Lambda (\rho_1,\rho_2)& =\max_{t\in[0,\frac{\pi}{2}]}
\bigg(\frac{(N-2)}{2e}S(N)((\rho_1+\rho_2)\alpha)^{\frac{2^*}{2}-a}(\mu_1\cos^{2^*}t+
\mu_2\sin^{2^*}t)\\
&\ \ \ \ +\frac{4}{p+q}C(N,p+q)\rho_1^{\frac{2rp}{p+q}}\rho_2^{\frac{2rq}{p+q}}
\cos^{\frac{2ap}{p+q}}t\sin^{\frac{2aq}{p+q}}t+\frac{|\Omega|}{e}(\mu_1+\mu_2)\bigg),\endaligned$$
where
$$a=\frac{N(p+q-2)}{4}\in \left(0,\frac{N}{N-2}\right],\ \  r=\frac{p+q}{4}-\frac a2\in\left[0,\frac 1N\right).$$
Next,we are going  to prove Theorem
\ref{prop:supercritical} by showing that
condition \eqref{eq:assnice} in Theorem \ref{prop:supercritical}, with
$R=R(\Omega,N,p+q)$ defined as
\begin{equation}\label{eq:def_R}
R(\Omega,N,p+q)=
\frac{(a-1)^{a-1}}
{a^{a}} \lambda_j(\Omega)^{-(a-1)},
\end{equation}
implies assumption \eqref{eq:mainassL} in Lemma \ref{thm:existence_bad_cond}.

\begin{proof}[End of the proof of Theorem \ref{prop:supercritical}]
	It follows from Lemma \ref{prop_starshaped} that $A$ is star-shaped with respect
to $(0,0)$. We estimate $\Lambda $ from above.  Note
that, as $0<a\leq\frac{2^*}{2}$, we have
$$\aligned\Lambda (\rho_1,\rho_2)\le\Lambda '(\rho_1,\rho_2):&=\max_{t\in[0,\frac{\pi}{2}]}
\bigg(\frac{(N-2)}{2e}S(N)(\mu_1\cos^{2^*}t+\mu_2\sin^{2^*}t)\\
&\ \ \ \ +\frac{4}{p+q}C(N,p+q)\rho_1^{\frac{2rp}{p+q}}\rho_2^{\frac{2rq}{p+q}}
\cos^{\frac{2p}{p+q}}t\sin^{\frac{2q}{p+q}}t+\frac{|\Omega|}{e}(\mu_1+\mu_2)\bigg)\\
&=\left(\frac{(N-2)}{2e}S(N)\max\{\mu_1,\mu_2\}+\frac{|\Omega|}{e}(\mu_1+\mu_2)\right).
\endaligned$$
Indeed, the maximum in the definition of $\Lambda$ is achieved
at either $t=0$ or $t=\frac\pi2$.
Therefore,  according to \eqref{eq:assnice} and \eqref{eq:def_R}, we obtain \eqref{eq:mainassL}, so that we can apply Lemma \ref{thm:existence_bad_cond} to conclude.
\end{proof}

\vskip0.3in
\section{Orbital stability of normalized solutions}\label{sec4}

In this section, we will prove Theorems
\ref{critical} and \ref{thm:stab} about the  stability of the normalized solutions.
Our aim is to prove the stability of the
sets  $G_{\bar\alpha}$ defined in  \eqref{eqGa}, respectively.

Inspired by \cite[Section 4]{Nonlinearity}, we claim that a set $\mathcal{G}\subset H^1_0(\Omega;\C^2)$ is orbitally
stable if for every $\varepsilon >0$ there exists $\delta>0$ such that, whenever
$(\psi_1,\psi_2)\in H^1_0(\Omega;\C^2)$ satisfies
$\text{dist}_{H^1_0}((\psi_1,\psi_2),\mathcal{G})<\delta$,  then the solution $(\Psi_1(t,\cdot),\Psi_2(t,\cdot))$ of
$$\left\{\aligned\ &\ \text{i}\partial_t\Psi_1+\Delta\Psi_1
+\Psi_1(\mu_1\log\Psi_1^2+|\Psi_2|^{q}|\Psi_1|^{p-2})=0,\\
\ &\ \text{i}\partial_t\Psi_2+\Delta\Psi_2+\Psi_2
(\mu_2\log\Psi_2^2+|\Psi_1|^{p}|\Psi_2|^{q-2})=0,\\
\ &\ \Psi_i(0,\cdot)=\psi_i(\cdot),\ \ \Psi_i(t,\cdot)\in H^1_0(\Omega;\C^2),
\endaligned\right.$$
must be continuous when  $ 0\leq t<+\infty$ and moreover,
\begin{equation}\label{eq:os2}
\sup_{t>0}\text{dist}_{H^1_0}((\Psi_1(t,\cdot),\Psi_2(t,\cdot)),\mathcal{G})<\varepsilon,
\end{equation}
where $\text{dist}_{H^1_0}$ denoting the
${H^1_0}$-distance.

\vskip0.3in

%Actually, note that we prove (conditional) orbital stability, where the condition is that the solution of
%system \eqref{eq:1}, with Cauchy datum $(\psi_{1},\psi_{2})\in H^1_0(\Omega;\C^2)$, exists locally in time for a
%time interval which is
%uniform in $\|(\psi_{1},\psi_{2})\|_{H^1_0}$, and that $\mathcal{Q} $ and $\mathcal{E} $ are preserved along the solutions

\begin{lemma}\label{lemma:c_c'}
For $(u_1,u_2)\in G_{\bar\alpha}$, there exist $\theta_1,\theta_2\in\R$
such that
$(u_1,u_2)=(e^{i\omega_1\theta_1}|u_1|,e^{i\omega_2\theta_2}|u_2|)$. In particular,
\[\inf \left\{\mathcal{E} (u_1,u_2): (u_1,u_2) \in H^1_0(\Omega;\C^2),
(|u_1|,|u_2|)
\in\mathcal{B} _{\bar\alpha}\right\}=c_{\bar\alpha},\]
while
\[\inf \left\{\mathcal{E} (u_1,u_2): (u_1,u_2) \in H^1_0(\Omega;\C^2), (|u_1|,|u_2|)
\in\mathcal{U} _{\bar\alpha}\right\} =:
\tilde c_{\bar \alpha}\le
\hat c_{\bar \alpha},\]
where $\mathcal{B} _{\bar\alpha}$ and $\mathcal{U} _{\bar\alpha}$ are defined in \eqref{eq:Balpha0} and \eqref{eq:Balpha}  respectively.
\end{lemma}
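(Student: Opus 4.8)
The plan is to derive everything from the diamagnetic inequality, which for any $u\in H^1_0(\Omega;\C)$ states that $|u|\in H^1_0(\Omega;\R)$ and $|\nabla|u||\le|\nabla u|$ almost everywhere (see \cite[Theorem 7.21]{LiebLoss}). Since the potential part of $\mathcal{E}$ (the logarithmic and coupling terms) depends on $(u_1,u_2)$ only through the moduli $(|u_1|,|u_2|)$, this inequality immediately yields
$$\mathcal{E}(u_1,u_2)\ge\mathcal{E}(|u_1|,|u_2|)\quad\text{for every }(u_1,u_2)\in H^1_0(\Omega;\C^2),$$
the gap being exactly $\tfrac12\sum_{i=1,2}\left(\int_\Omega|\nabla u_i|^2-\int_\Omega|\nabla|u_i||^2\right)\ge0$. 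Moreover $(|u_1|,|u_2|)$ has the same $L^2$-norms and the same-or-smaller Dirichlet energy as $(u_1,u_2)$, so that membership in $\mathcal{B}_{\bar\alpha}$ (defined in \eqref{eq:Balpha0}) is preserved, and any \emph{real-valued} pair of $\mathcal{B}_{\bar\alpha}$ or $\mathcal{U}_{\bar\alpha}$, regarded as a complex pair, is admissible for the corresponding complex infimum and realizes the same value of $\mathcal{E}$. This bookkeeping is what connects the complex infima to the real quantities $c_{\bar\alpha},\hat c_{\bar\alpha}$ from \eqref{eq:calpha}.

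First I would prove the phase rigidity. Take $(u_1,u_2)\in G_{\bar\alpha}$, so by \eqref{eqGa} we have $(|u_1|,|u_2|)\in\mathcal{B}_{\bar\alpha}$ and $\mathcal{E}(u_1,u_2)=c_{\bar\alpha}$. As $(|u_1|,|u_2|)$ is a real admissible competitor for $c_{\bar\alpha}=\inf_{\mathcal{B}_{\bar\alpha}}\mathcal{E}$, we obtain the chain
$$c_{\bar\alpha}\le\mathcal{E}(|u_1|,|u_2|)\le\mathcal{E}(u_1,u_2)=c_{\bar\alpha},$$
which forces equality throughout. Hence $\int_\Omega|\nabla u_i|^2=\int_\Omega|\nabla|u_i||^2$ for $i=1,2$, and the equality case of the diamagnetic inequality produces constant phases $\theta_1,\theta_2$ with $u_i=e^{i\omega_i\theta_i}|u_i|$, which is the first assertion.

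Next I would identify the two infima. For any $(u_1,u_2)\in H^1_0(\Omega;\C^2)$ with $(|u_1|,|u_2|)\in\mathcal{B}_{\bar\alpha}$, the diamagnetic inequality gives $\mathcal{E}(u_1,u_2)\ge\mathcal{E}(|u_1|,|u_2|)\ge c_{\bar\alpha}$, so the complex infimum is bounded below by $c_{\bar\alpha}$; testing with real-valued pairs of $\mathcal{B}_{\bar\alpha}$ gives the reverse inequality, whence the first infimum equals $c_{\bar\alpha}$. Running the same argument over $\mathcal{U}_{\bar\alpha}$ (from \eqref{eq:Balpha}) and testing the complex infimum with real-valued elements of $\mathcal{U}_{\bar\alpha}$, whose moduli remain in $\mathcal{U}_{\bar\alpha}$ with unchanged energy, yields $\tilde c_{\bar\alpha}\le\hat c_{\bar\alpha}$, the last claim. (In fact equality holds by the lower bound as well, but only the stated inequality is needed.)

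The main obstacle is the equality case of the diamagnetic inequality: deducing from $\int_\Omega|\nabla u_i|^2=\int_\Omega|\nabla|u_i||^2$ that $u_i$ carries a \emph{globally} constant phase. This is delicate on the zero set of $u_i$, since the polar decomposition $u_i=|u_i|e^{i\theta_i}$ and the identity $|\nabla u_i|^2=|\nabla|u_i||^2+|u_i|^2|\nabla\theta_i|^2$ are only valid away from $\{u_i=0\}$. I would invoke the standard characterization of equality in \cite[Theorem 7.21]{LiebLoss} together with the argument of \cite{Nonlinearity}, using the connectedness of $\Omega$ to exclude phase jumps across the nodal set. Everything else reduces to the elementary monotonicity and admissibility observations recorded above.
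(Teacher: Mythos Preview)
Your proof is correct and follows essentially the same approach as the paper: both use the diamagnetic inequality to get $\mathcal{E}(u_1,u_2)\ge\mathcal{E}(|u_1|,|u_2|)$, apply the chain $c_{\bar\alpha}\le\mathcal{E}(|u_1|,|u_2|)\le\mathcal{E}(u_1,u_2)=c_{\bar\alpha}$ to force equality in the diamagnetic inequality and extract the constant phase, and test the complex infima with real-valued competitors to obtain the stated identities. Your treatment is slightly more detailed (you explicitly flag the delicate equality case on the nodal set and note that in fact $\tilde c_{\bar\alpha}=\hat c_{\bar\alpha}$), but the argument is the same.
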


\begin{proof}
Given $(\tau_1,\tau_2)\in
\mathcal{U}_{\bar\alpha}$, according to the definition of $\mathcal{U}_{\bar\alpha}$ in \eqref{eq:Balpha}, it is easy to know that $(\tau_1,\tau_2)
\in H^1_0(\Omega;\C^2)$ and that $(|\tau_1|,|\tau_2|)
\in\mathcal{U}_{\bar\alpha}$, so that $\tilde c_{\bar\alpha}\le \hat c_{\bar\alpha}$. For
 $(u_1,u_2)\in G_{\bar\alpha}$, by using the diamagnetic inequality
 \cite[Theorem 7.21]{LiebLoss}, we obtain
$$\int_\Omega|\nabla|u_i||^2\le\int_\Omega|\nabla u_i|^2,$$
for $i=1,2$. Together with  $|u_i|^2\log|u_i|^2=u_i^2\log u_i^2,\ i=1,2$ and
$$c_{\bar\alpha}\le\mathcal{E}(|u_1|,|u_2|)\le\mathcal{E}(u_1,u_2)=c_{\bar\alpha}.$$
Therefore,
$$\int_\Omega|\nabla|u_i||^2=\int_\Omega|\nabla u_i|^2,\ \ i=1,2,$$
so that equality
holds in the
diamagnetic inequality, whence $u_i$ is a complex multiple of $|u_i|$,
that is to say $u_i=e^{i\omega_i\theta_i}|u_i|$ for some $\theta_i\in\R$, and the rest of
 the lemma follows.
\end{proof}

\begin{lemma}\label{lem:stability_compact}
Let  $\bar \alpha$ be defined  as above.  Assume that  $\{(\psi_{1,n},\psi_{2,n})\}\subset H^1_0(\Omega;\C^2)$ satisfies:
\begin{equation}\label{eq:stability_compact1}
\int_\Omega|\psi_{i,n}|^2\to\rho_i\ \ \ \  \text{for}\ \ i=1,2,\ \ \mathcal{E}(\psi_{1,n},\psi_{2,n})\to
c_{\bar\alpha}\ \ \text{as}\ \ n\to\infty,
\end{equation}
and, for $n$  large,

\begin{equation}\label{eq:stability_compact2}
\int_\Omega(|\nabla\psi_{1,n}|^2+|\nabla \psi_{2,n}|)^2\leq (\rho_1+\rho_2)\bar \alpha +\text{o}(1).
\end{equation}
Then there exists $(u_1,u_2)\in G_{\bar \alpha}$ such that, up to a subsequence,
$(\psi_{1,n},\psi_{2,n})\to (u_1,u_2)$ in $H^1_0(\Omega;\C^2)$.
\end{lemma}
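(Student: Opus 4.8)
The plan is to reduce the complex problem to the real compactness already established, by passing to the moduli $(|\psi_{1,n}|,|\psi_{2,n}|)$, and then to upgrade convergence of the moduli to convergence of the complex functions through the equality case of the diamagnetic inequality. First I would record that the moduli $w_{i,n}:=|\psi_{i,n}|$ form a minimizing sequence for $c_{\bar\alpha}$ in the real setting. Indeed, by the diamagnetic inequality (the same one invoked in Lemma \ref{lemma:c_c'}) one has $\int_\Omega|\nabla w_{i,n}|^2\le\int_\Omega|\nabla\psi_{i,n}|^2$, while all the remaining potential, logarithmic and coupling terms of $\mathcal{E}$ depend only on the moduli and are therefore unchanged. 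Hence $|w_{i,n}|_2^2\to\rho_i$, $\sum_i\int_\Omega|\nabla w_{i,n}|^2\le(\rho_1+\rho_2)\bar\alpha+o(1)$, and
$$\mathcal{E}(w_{1,n},w_{2,n})\le\mathcal{E}(\psi_{1,n},\psi_{2,n})=c_{\bar\alpha}+o(1),$$
so one checks directly that $(w_{1,n},w_{2,n})$ is a minimizing sequence for $c_{\bar\alpha}$ in the sense of \eqref{eq:minimizing_seq}.

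Second, I would obtain strong $H^1_0$-compactness of the moduli. In the Sobolev-critical case $p+q=2^*$ this is exactly the content of Lemma \ref{prop:Sobcritconv}, whose hypothesis is the standing assumption \eqref{eq:compact_intro} on $\bar\alpha$; it yields, up to a subsequence, $w_{i,n}\to\bar u_i$ strongly in $H^1_0(\Omega)$. In the subcritical case $2\le p+q<2^*$ the compact embedding $H^1_0(\Omega)\hookrightarrow L^{p+q}(\Omega)$ together with Lemma \ref{le3321} makes $\mathcal{E}$ weakly lower semicontinuous along $(w_n)$, and the convergence of the energies forces each gradient norm to converge, again giving $w_{i,n}\to\bar u_i$ in $H^1_0(\Omega)$. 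In either case the limit $(\bar u_1,\bar u_2)\in\mathcal{B}_{\bar\alpha}$ has masses $\rho_i$ and achieves $c_{\bar\alpha}$. Moreover, since $\mathcal{E}(w_n)-\mathcal{E}(\psi_n)\to0$ and every gradient gap is nonnegative, I would record the crucial consequence
$$\int_\Omega|\nabla\psi_{i,n}|^2-\int_\Omega|\nabla w_{i,n}|^2\to0,\qquad i=1,2.$$

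Third comes the upgrade to the complex sequence. Since $(\psi_{1,n},\psi_{2,n})$ is bounded in $H^1_0(\Omega;\mathbb{C}^2)$, up to a subsequence $\psi_{i,n}\rightharpoonup u_i$ in $H^1_0(\Omega;\mathbb{C})$ and $\psi_{i,n}\to u_i$ a.e.; comparing with $w_{i,n}\to\bar u_i$ a.e. gives $|u_i|=\bar u_i$. Using the gradient-gap convergence and the strong convergence of the moduli, $\int_\Omega|\nabla\psi_{i,n}|^2\to\int_\Omega|\nabla\bar u_i|^2=\int_\Omega|\nabla|u_i||^2$; combined with weak lower semicontinuity $\int_\Omega|\nabla u_i|^2\le\liminf\int_\Omega|\nabla\psi_{i,n}|^2$ and the diamagnetic inequality $\int_\Omega|\nabla|u_i||^2\le\int_\Omega|\nabla u_i|^2$, equality holds throughout. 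Equality in the diamagnetic inequality (exactly as in Lemma \ref{lemma:c_c'}) forces $u_i=e^{\mathrm{i}\theta_i}|u_i|$ for a constant phase $\theta_i$, while convergence of the $H^1_0$-norms together with weak convergence in the Hilbert space $H^1_0(\Omega;\mathbb{C})$ yields $\psi_{i,n}\to u_i$ strongly. Finally $(|u_1|,|u_2|)=(\bar u_1,\bar u_2)\in\mathcal{B}_{\bar\alpha}$ and $\mathcal{E}(u_1,u_2)=\mathcal{E}(\bar u_1,\bar u_2)=c_{\bar\alpha}$, so $(u_1,u_2)\in G_{\bar\alpha}$, as required.

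The main obstacle sits in the second step in the critical case, where $\mathcal{E}$ fails to be weakly lower semicontinuous and the minimizing sequence may lose compactness; this is precisely what the bound \eqref{eq:compact_intro} and Lemma \ref{prop:Sobcritconv} are designed to overcome, through the Br\'ezis--Lieb splittings of the logarithmic term (Lemma \ref{le332}) and of the coupling term (Lemma \ref{32}). The upgrade step is softer, but must be handled with care because the logarithmic nonlinearity is not $C^1$; fortunately it enters $\mathcal{E}$ only through the moduli, so the complex-to-real reduction isolates all of the analytic difficulty inside the already-established real compactness, and the passage from moduli to complex functions is driven entirely by the rigidity of the diamagnetic inequality.
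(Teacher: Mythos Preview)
Your argument is correct and takes a genuinely different route from the paper's. The paper works directly with the complex sequence: it extracts a weak limit $(u_1,u_2)$, shows $(|u_1|,|u_2|)\in\mathcal{B}_{\bar\alpha}$ so that $\mathcal{E}(u_1,u_2)\ge c_{\bar\alpha}$, and then in the subcritical case uses compact embeddings and Lemmas~\ref{le332}, \ref{le3321} to obtain weak lower semicontinuity and hence norm (thus strong) convergence, while in the critical case it simply asserts that the proof of Lemma~\ref{prop:Sobcritconv} ``also holds for complex valued ones'' when combined with Lemma~\ref{lemma:c_c'}. Your strategy instead factors the problem: you first pass to the moduli $(|\psi_{1,n}|,|\psi_{2,n}|)$, apply the \emph{real} compactness of Lemma~\ref{prop:Sobcritconv} (or the standard subcritical argument) as a black box to get strong convergence of the moduli, deduce from $\mathcal{E}(\psi_n)-\mathcal{E}(|\psi_n|)\to 0$ that each diamagnetic gradient gap vanishes, and finally upgrade via weak convergence plus norm convergence in the Hilbert space $H^1_0(\Omega;\mathbb{C})$. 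What your approach buys is modularity and rigor in the critical case: you never have to revisit the Br\'ezis--Lieb machinery for complex-valued functions, and the diamagnetic rigidity step is clean. The paper's route is shorter on the page but leans on the unproved (though true) claim that Lemma~\ref{prop:Sobcritconv} carries over verbatim to $H^1_0(\Omega;\mathbb{C}^2)$. One small point worth making explicit in your write-up: the lower bound $c_{\bar\alpha}\le\mathcal{E}(w_{1,n},w_{2,n})$ in \eqref{eq:minimizing_seq} is not immediate since the moduli need not lie exactly in $\mathcal{M}$; however the proof of Lemma~\ref{prop:Sobcritconv} only uses the \emph{upper} bound together with $\mathcal{E}(\bar u_1,\bar u_2)\ge c_{\bar\alpha}$, and the lower bound on $\mathcal{E}(w_n)$ is recovered a posteriori from strong convergence.
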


\begin{proof}
Using \eqref{eq:stability_compact2} there exists $(\bar\psi_1,\bar\psi_2)\in H^1_0(\Omega;\C^2)$ such that, up
to a subsequence, $\psi_{i,n}\rightharpoonup u_i$ in $H^1_0(\Omega;\C)$ and $\psi_{i,n}\to v_i$ in
$L^2(\Omega;\C)$ for $i=1,2$, as $n\to+\infty$. Then \eqref{eq:stability_compact1},
\eqref{eq:stability_compact2} and Lemma \ref{lemma:c_c'} implies that
\[\int_\Omega|u_i|^2 =\rho_i,\ \
\int_\Omega(|\nabla u_1|^2+|\nabla u_2|^2)\leq(\rho_1+\rho_2)\bar\alpha,\ \
\mathcal{E}(u_{1},u_{2})\ge c_{\bar\alpha}, \quad i=1,2.\]
On the one hand, in case of $p+q<2^*$, the compact embedding  implies that $(\psi_{1,n},\psi_{2,n})\to(v_1,v_2)$ also in
$L^{p+q}(\Omega;\C^2)$. Therefore, it follows from Lemmas \ref{le332} and \ref{le3321} that
$$\int_\Omega|\psi_{i,n}|^{2}\log|\psi_{i,n}|^{2}=|u_{i}|^{2}\log|u_{i}|^{2},\ \
\int_\Omega|\psi_{1,n}|^{p}|\psi_{2,n}|^{q}=|u_{1}|^{p}|u_{2}|^{q},\ \ i=1,2.$$
According to the weak lower semicontinuity of the norm,
\[c_{\bar\alpha}\le\mathcal{E}(u_{1},u_{2})\le\liminf_{n\to+\infty}
\mathcal{E}(\psi_{1,n},\psi_{2,n})=c_{\bar\alpha},\]
together with the fact $(u_1,u_2)\in G_{\bar\alpha}$,we get the strong $H^1_0$-convergence.

\vskip0.1in
On the other hand, in case of $p+q=2^*$, the result follows  from Lemma \ref{prop:Sobcritconv}:
although that lemma is stated for real valued functions, combining with  Lemma \ref{lemma:c_c'},
its proof also holds for complex valued ones.
\end{proof}

\begin{lemma}\label{prop:stability}
Let $\bar \alpha$ be as above. If $c_{\bar \alpha}<\tilde c_{\bar \alpha}$, then $G_{\bar \alpha}$ is
(conditionally) orbitally stable.
\end{lemma}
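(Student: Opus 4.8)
The plan is to argue by contradiction, combining the conservation laws of the flow with the compactness statement in Lemma \ref{lem:stability_compact}. Suppose $G_{\bar\alpha}$ is \emph{not} orbitally stable. Then there exist $\varepsilon_0>0$, initial data $(\psi_{1,n},\psi_{2,n})\in H^1_0(\Omega;\C^2)$ with $\mathrm{dist}_{H^1_0}((\psi_{1,n},\psi_{2,n}),G_{\bar\alpha})\to 0$, and times $t_n>0$ such that the corresponding solutions satisfy
$$\mathrm{dist}_{H^1_0}\big((\Psi_{1,n}(t_n,\cdot),\Psi_{2,n}(t_n,\cdot)),G_{\bar\alpha}\big)\ge\varepsilon_0.$$
Since $\mathrm{dist}_{H^1_0}\to 0$ means the initial data converge strongly in $H^1_0$ to elements of $G_{\bar\alpha}$ (on which $\mathcal{E}\equiv c_{\bar\alpha}$ and $\mathcal{Q}(\cdot)=\rho_i$), the Br\'ezis--Lieb type Lemmas \ref{le332} and \ref{le3321} give continuity of $\mathcal{E}$ along this convergence, so that $\mathcal{E}(\psi_{1,n},\psi_{2,n})\to c_{\bar\alpha}$ and $\int_\Omega|\psi_{i,n}|^2\to\rho_i$. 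Conservation of energy and of mass along the flow then yields, for every admissible $t$,
$$\mathcal{E}(\Psi_{1,n}(t),\Psi_{2,n}(t))\to c_{\bar\alpha},\qquad \int_\Omega|\Psi_{i,n}(t)|^2\to\rho_i,\quad i=1,2.$$

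The decisive step --- and the one I expect to be the main obstacle --- is to verify hypothesis \eqref{eq:stability_compact2} of Lemma \ref{lem:stability_compact} at $t=t_n$, i.e.\ that the gradient constraint defining $\mathcal{B}_{\bar\alpha}$ is not violated along the trajectory. Here I would exploit the continuity in $t$ of $(\Psi_{1,n}(t),\Psi_{2,n}(t))$ in $H^1_0$ together with the strict gap $c_{\bar\alpha}<\tilde c_{\bar\alpha}$. Since $c_{\bar\alpha}<\hat c_{\bar\alpha}$ forces the ground states to sit strictly inside $\mathcal{B}_{\bar\alpha}$, for $n$ large the datum has $(|\psi_{1,n}|,|\psi_{2,n}|)$ interior to $\mathcal{B}_{\bar\alpha}$. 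If the trajectory left $\mathcal{B}_{\bar\alpha}$ before $t_n$, by continuity there would be a first time $\bar t_n$ at which the gradient norm equals $(\rho_1+\rho_2)\bar\alpha$, i.e.\ $(|\Psi_{1,n}(\bar t_n)|,|\Psi_{2,n}(\bar t_n)|)\in\mathcal{U}_{\bar\alpha}$. By the definition of $\tilde c_{\bar\alpha}$ in Lemma \ref{lemma:c_c'} this gives $\mathcal{E}(\Psi_{1,n}(\bar t_n),\Psi_{2,n}(\bar t_n))\ge\tilde c_{\bar\alpha}$, while conservation of energy gives $\mathcal{E}(\Psi_{1,n}(\bar t_n),\Psi_{2,n}(\bar t_n))\to c_{\bar\alpha}$; letting $n\to\infty$ produces $c_{\bar\alpha}\ge\tilde c_{\bar\alpha}$, contradicting the hypothesis. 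Hence $(|\Psi_{1,n}(t)|,|\Psi_{2,n}(t)|)\in\mathcal{B}_{\bar\alpha}$ for all $t\le t_n$, and \eqref{eq:stability_compact2} holds at $t=t_n$.

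With \eqref{eq:stability_compact1}--\eqref{eq:stability_compact2} verified for the sequence $(\Psi_{1,n}(t_n,\cdot),\Psi_{2,n}(t_n,\cdot))$, Lemma \ref{lem:stability_compact} furnishes $(u_1,u_2)\in G_{\bar\alpha}$ with $(\Psi_{1,n}(t_n,\cdot),\Psi_{2,n}(t_n,\cdot))\to(u_1,u_2)$ in $H^1_0(\Omega;\C^2)$ along a subsequence. In particular $\mathrm{dist}_{H^1_0}((\Psi_{1,n}(t_n,\cdot),\Psi_{2,n}(t_n,\cdot)),G_{\bar\alpha})\to 0$, contradicting the choice $\ge\varepsilon_0$. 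This contradiction proves the stability of $G_{\bar\alpha}$. The one genuinely delicate point is the local $H^1_0$-well-posedness and time-continuity of the logarithmic flow needed to run the continuity argument at $\bar t_n$; this is precisely why the conclusion is stated as \emph{conditional} orbital stability, and granting that well-posedness the argument above is otherwise self-contained.
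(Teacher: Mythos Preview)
Your overall architecture matches the paper's: argue by contradiction, use conservation of mass and energy to propagate \eqref{eq:stability_compact1}, and use a continuity-in-time argument on the gradient norm to control \eqref{eq:stability_compact2}. The place where your argument breaks down is the step where you claim $(|\Psi_{1,n}(\bar t_n)|,|\Psi_{2,n}(\bar t_n)|)\in\mathcal{U}_{\bar\alpha}$ and hence $\mathcal{E}(\Psi_{1,n}(\bar t_n),\Psi_{2,n}(\bar t_n))\ge\tilde c_{\bar\alpha}$. Membership in $\mathcal{U}_{\bar\alpha}$ requires not only the gradient constraint $|\nabla u_1|_2^2+|\nabla u_2|_2^2=(\rho_1+\rho_2)\bar\alpha$ but also the exact mass constraint $|u_i|_2^2=\rho_i$. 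Since the initial data satisfy only $\int_\Omega|\psi_{i,n}|^2\to\rho_i$, at time $\bar t_n$ the masses are still only $\rho_i+o(1)$, so the point is \emph{not} in $\mathcal{U}_{\bar\alpha}$ and you cannot invoke the definition of $\tilde c_{\bar\alpha}$ directly. Passing to the limit in the inequality $\mathcal{E}\ge\tilde c_{\bar\alpha}$ is therefore unjustified.

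The paper closes this gap by applying Lemma~\ref{lem:stability_compact} a second time, now to the sequence $(\Psi_{1,n}(\bar t_n,\cdot),\Psi_{2,n}(\bar t_n,\cdot))$: this sequence satisfies \eqref{eq:stability_compact1} and, by construction, $|\nabla\Psi_{1,n}(\bar t_n)|_2^2+|\nabla\Psi_{2,n}(\bar t_n)|_2^2=(\rho_1+\rho_2)\bar\alpha+o(1)$, so also \eqref{eq:stability_compact2}. The lemma then produces a strong $H^1_0$ limit $(\bar u_1,\bar u_2)\in G_{\bar\alpha}$; strong convergence transfers the gradient identity to the limit, giving $|\nabla\bar u_1|_2^2+|\nabla\bar u_2|_2^2=(\rho_1+\rho_2)\bar\alpha$. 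Now $(\bar u_1,\bar u_2)\in G_{\bar\alpha}$ has the exact masses, so $(|\bar u_1|,|\bar u_2|)\in\mathcal{U}_{\bar\alpha}$ and $\mathcal{E}(\bar u_1,\bar u_2)=c_{\bar\alpha}$, contradicting $c_{\bar\alpha}<\tilde c_{\bar\alpha}$. In short, rather than comparing the perturbed energy to $\tilde c_{\bar\alpha}$ at each $n$, one first passes to the limit via the compactness lemma and only then reads off the contradiction at the level of exact constraints.
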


\begin{proof}
Suppose by contradiction that $\{(\psi_{1,n},\psi_{2,n})\}\subset H^1_0
(\Omega;\C^2)$, $(u_{1,n},u_{2,n})\in G_{\bar \alpha}$ and $\bar \varepsilon >0$ are such that
\begin{equation}\label{eq:psi_to_u}
\lim_{n\to\infty} \|(\psi_{1,n},\psi_{2,n})-(u_{1,n},u_{2,n})\|_{H^1_0(\Omega;\C^2)}=0,
\end{equation}
and
\begin{equation}
\sup_{t>0}\text{dist}_{H^1_0}((\Psi_{1,n}(t,\cdot),\Psi_{2,n}(t,\cdot)),G_{\bar \alpha})
\geq 2\bar\varepsilon ,
\end{equation}
where $(\Psi_{1,n},\Psi_{2,n})$ is the solution of \eqref{eq:system_schro} with initial
condition $(\psi_{1,n},\psi_{2,n})$. Then there exists $\{t_n\}$ such that, letting
$\phi_{i,n}(x):=\Psi_{i,n}(t_n,x)$, $i=1,2$,
\begin{equation}\label{eq:stability_phi}
\text{dist}_{H^1_0}((\phi_{1,n},\phi_{2,n}),G_{\bar \alpha}) \geq\bar\varepsilon .
\end{equation}
Next, we prove that $\{(\phi_{1,n},\phi_{2,n})\}$ satisfies \eqref{eq:stability_compact1}
and \eqref{eq:stability_compact2}. Then Lemma \ref{lem:stability_compact} provides
a contradicton to \eqref{eq:stability_phi}. The proof of the lemma is now complete.

Lemma \ref{lem:stability_compact} implies that $G_{\bar \alpha}$ is compact. Therefore, by using
\eqref{eq:psi_to_u}, there exists $(u_1,u_2)\in
G_{\bar \alpha}$ such that, up to a subsequence,
\begin{equation}\label{eq:strongconvergence_stability}
 (\psi_{1,n},\psi_{2,n})\to (u_{1},u_{2}) \ \  \text{in}\ \  H^1_0(\Omega;\C^2).
\end{equation}
Combining  with the continuity of the  Sobolev embeddings,
it follows from Lemmas \ref{le332} and \ref{le3321} that
$$ \int_\Omega \psi_{1,n}^{2}\log \psi_{i,n}^2
=\int_{\Omega}u_i^2\log u_i^2,\ \
\int_{\Omega}|\psi_{1,n}|^p|\psi_{2,n}|^q=\int_{\Omega}|u_{1}|^{p}|u_{2}|^{q},\ \ i=1,2,$$
which means that
$(\psi_{1,n},\psi_{2,n})$ satisfies \eqref{eq:stability_compact1}. Then the
conservation of the mass and of the energy imply that
\[\int_\Omega|\phi_{i,n}|^2=\int_\Omega|\psi_{i,n}|^2\to\rho_i,\  \ i=1,2,
\ \ \text{and}\ \ \mathcal{E}(\phi_{1,n},\phi_{2,n})
=\mathcal{E}(\psi_{1,n},\psi_{2,n})\to c_{\bar\alpha},\]
as $n\to+\infty$,
so that $(\phi_{1,n},\phi_{2,n})$ also satisfies \eqref{eq:stability_compact1}. Next, we check that, at least for a subsequence, $(\phi_{1,n},
\phi_{2,n})$ satisfies \eqref{eq:stability_compact2} , i.e.,
\begin{equation}\label{eq:stability_contr2}
\int_\Omega (|\nabla \phi_{1,n}|^2 + |\nabla \phi_{2,n}|^2) \leq(\rho_1+\rho_2) \bar\alpha + \text{o}(1).
\end{equation}
Indeed, by contradiction, assume there exists $\bar n\in\N$ and $\bar\varepsilon>0$ such that
\[\int_\Omega(|\nabla\phi_{1,n}|^2+|\nabla\phi_{2,n}|^2) \geq(\rho_1+\rho_2)\bar\alpha+\bar\varepsilon.\]
Since
$$\aligned\int_\Omega(|\nabla\Psi_{1,n}(0,\cdot)|^2+|\nabla\Psi_{2,n}(0,\cdot)|^2)&=
\int_\Omega(|\nabla\psi_{1,n}|^2+|\nabla\psi_{2,n}|^2)\\
&\leq\int_\Omega(|\nabla u_{1,n}|^2+|\nabla u_{2,n}|^2)+\text{o}(1) \\
&\leq(\rho_1+\rho_2)\bar\alpha + \text{o}(1)\endaligned$$
for every $n$ sufficiently large, then there exists $\bar t_n\in (0,t_n)$ such that $(\Psi_{1,n}
(\bar t_n,\cdot), \Psi_{2,n}(\bar t_n,\cdot))$ satisfies \eqref{eq:stability_compact1}
and
\[\int_\Omega (|\nabla \Psi_{1,n}(\bar t_n,\cdot)|^2 + |\nabla \Psi_{2,n}
(\bar t_n,\cdot)|^2) =  (\rho_1+\rho_2)\bar \alpha + \text{o}(1),\]
and in particular \eqref{eq:stability_compact2}. According to Lemma
\ref{lem:stability_compact}
there exists $(\bar u_1,\bar u_2)\in G_{\bar \alpha}$ such that
\[\int_\Omega (|\nabla \bar u_{1}|^2 + |\nabla\bar u_2|^2)
=(\rho_1+\rho_2)\bar \alpha,\]
which contradicts the assumption $c_{\bar\alpha}<\tilde c_{\bar \alpha}$.
\end{proof}

We have proved Lemma \ref{prop:stability} by assuming that $c_{\bar \alpha}
<\tilde c_{\bar \alpha}$. Next  we check that this assumption is satisfied due to the fact  $c_{\bar \alpha}<\hat c_{\bar \alpha}$.

\begin{lemma}\label{lem:c_tilde}
Let $\bar \alpha$ be as above. Then $c_{\bar \alpha}<\tilde c_{\bar\alpha}$.
\end{lemma}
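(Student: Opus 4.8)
The plan is to show that the complex-relaxed value $\tilde c_{\bar\alpha}$ actually coincides with the real value $\hat c_{\bar\alpha}$, and then simply invoke the defining property of $\bar\alpha$, namely $c_{\bar\alpha}<\hat c_{\bar\alpha}$. Since Lemma \ref{lemma:c_c'} already records the bound $\tilde c_{\bar\alpha}\le\hat c_{\bar\alpha}$ (obtained by testing with real representatives), the only thing missing is the reverse inequality $\tilde c_{\bar\alpha}\ge\hat c_{\bar\alpha}$, and for this the diamagnetic inequality is the whole story.

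Concretely, I would take an arbitrary admissible pair $(u_1,u_2)\in H^1_0(\Omega;\C^2)$ for $\tilde c_{\bar\alpha}$, that is, one with $(|u_1|,|u_2|)\in\mathcal{U}_{\bar\alpha}$. The key observation is that the potential part of $\mathcal{E}$ depends on $u_i$ only through its modulus: the terms $|u_i|^2(\log|u_i|^2-1)$ and $|u_1|^p|u_2|^q$ are phase-independent, so $(u_1,u_2)$ and $(|u_1|,|u_2|)$ differ only in the Dirichlet term. Applying the diamagnetic inequality \cite[Theorem 7.21]{LiebLoss}, which gives $\int_\Omega|\nabla|u_i||^2\le\int_\Omega|\nabla u_i|^2$ for $i=1,2$, I obtain $\mathcal{E}(u_1,u_2)\ge\mathcal{E}(|u_1|,|u_2|)$. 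Because $(|u_1|,|u_2|)$ is a real pair lying in $\mathcal{U}_{\bar\alpha}$, the right-hand side is bounded below by $\hat c_{\bar\alpha}$, and taking the infimum over all admissible $(u_1,u_2)$ yields $\tilde c_{\bar\alpha}\ge\hat c_{\bar\alpha}$.

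Combining this with Lemma \ref{lemma:c_c'} gives $\tilde c_{\bar\alpha}=\hat c_{\bar\alpha}$. Since $\bar\alpha$ is chosen exactly as in Theorem \ref{thm:stab}, so that $c_{\bar\alpha}<\hat c_{\bar\alpha}$, I conclude at once $c_{\bar\alpha}<\hat c_{\bar\alpha}=\tilde c_{\bar\alpha}$, which is the assertion of the lemma.

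I do not expect a serious analytic obstacle: all the weight is carried by the diamagnetic inequality, which handles the phase degree of freedom and shows that minimizing the energy over complex pairs with prescribed moduli cannot descend below the real minimization level. The only points I would state carefully are, first, that every term of $\mathcal{E}$ except the gradient term is invariant under $u_i\mapsto|u_i|$ (so the diamagnetic inequality genuinely controls the full energy), and second, that passing to moduli preserves both the $L^2$-masses $\rho_i$ and the Dirichlet constraint defining $\mathcal{U}_{\bar\alpha}$, so that $(|u_1|,|u_2|)$ is indeed admissible for $\hat c_{\bar\alpha}$.
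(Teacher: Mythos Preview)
Your argument is correct and in fact simpler than the paper's. You observe that for any complex pair $(u_1,u_2)$ admissible for $\tilde c_{\bar\alpha}$ the constraint already places $(|u_1|,|u_2|)$ in $\mathcal{U}_{\bar\alpha}$, and since the potential part of $\mathcal{E}$ depends only on moduli while the Dirichlet part decreases under $u_i\mapsto|u_i|$ by the diamagnetic inequality, one gets $\mathcal{E}(u_1,u_2)\ge\mathcal{E}(|u_1|,|u_2|)\ge\hat c_{\bar\alpha}$ directly; hence $\tilde c_{\bar\alpha}=\hat c_{\bar\alpha}$ and the conclusion follows from $c_{\bar\alpha}<\hat c_{\bar\alpha}$. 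The paper instead argues by contradiction: assuming $\tilde c_{\bar\alpha}=c_{\bar\alpha}$, it takes a minimizing sequence for $\tilde c_{\bar\alpha}$, passes to moduli, invokes the compactness Lemma~\ref{lem:stability_compact} to extract strong limits, and shows the limit lies on $\mathcal{U}_{\bar\alpha}$, contradicting $c_{\bar\alpha}<\hat c_{\bar\alpha}$. Your route avoids the compactness step entirely and yields the stronger conclusion $\tilde c_{\bar\alpha}=\hat c_{\bar\alpha}$; the paper's route only shows the strict inequality and relies on the (nontrivial, especially in the critical case) convergence machinery built earlier. One small cosmetic point: in your final paragraph you write that passing to moduli ``preserves'' the Dirichlet constraint, but there is nothing to preserve --- the admissibility condition for $\tilde c_{\bar\alpha}$ is precisely $(|u_1|,|u_2|)\in\mathcal{U}_{\bar\alpha}$, so the modulus pair is admissible for $\hat c_{\bar\alpha}$ by definition.
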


\begin{proof}
If by contradiction $\tilde c=c$, then there exists $\varepsilon _n\to 0$ and
$(\tau_{1,n},\tau_{2,n})\in H^1_0(\Omega;\C^2)$ such that
\[\|(\tau_{1,n},\tau_{2,n})\|^2_{H^1_0(\Omega;\C^2)}=\bar\alpha(\rho_1+\rho_2),\ \
\int_\Omega|\tau_{i,n}|^2=\rho_i, $$
and
$$c_{\bar \alpha}\leq
\mathcal{E}(\tau_{1,n},\tau_{2,n})\leq c_{\bar\alpha}+\varepsilon_n,\ \ i=1,2\]
for every $n$. Denoting $u_{i,n}:=|\tau _{i,n}|$, $i=1,2$, the diamagnetic inequality implies
\begin{equation}\label{eq:c_tilde1}
\|(u_{1,n},u_{2,n})\|^2_{H^1_0(\Omega;\R^2)} \leq
\|(\tau_{1,n},\tau_{2,n})\|^2_{H^1_0(\Omega;\C^2)}=(\rho_1+\rho_2)\bar\alpha,
\end{equation}
so that $(u_{1,n},u_{2,n})$ is an admissible couple for the minimization problem $c_{\bar\alpha}$
and then
\begin{equation}\label{eq:c_tilde2}
c_{\bar\alpha}\leq\mathcal{E}(u_{1,n},u_{2,n})\leq\mathcal{E}(\tau_{1,n},\tau _{2,n})
\leq c_{\bar\alpha}+\varepsilon_n.
\end{equation}
In particular,
\begin{equation}\label{eq:c_tilde3}
\frac{1}{2}\left(\|(\tau_{1,n},\tau _{2,n})\|^2_{H^1_0(\Omega;\C^2)}-
\|(u_{1,n},u_{2,n})\|^2_{H^1_0(\Omega;
\R^2)}\right)=\mathcal{E}(\tau_{1,n},\tau_{2,n})-\mathcal{E}
(u_{1,n},u_{2,n})\leq \varepsilon _n.
\end{equation}
Then Lemma \ref{lem:stability_compact} applies to both sequences, yielding both
$(\tau_{1,n},\tau_{2,n})\to(\tau_{1,\infty},\tau_{2,\infty})$ and
$(u_{1,n},u_{2,n})\to(u_{1,\infty},u_{2,\infty})$ in $H^1_0$. Passing to
the limit in \eqref{eq:c_tilde2} and \eqref{eq:c_tilde3}. This, combined with the
continuity of Sobolev embeddings and \eqref{le332} and \eqref{le3321}, we infer
$$\int_\Omega|u_{i,n}|^{2}\log|u_{i,n}|^{2}=|u_{1,\infty}|^{2}\log|u_{1,\infty}|^{2},\ \ i=1,2,$$
$$\int_\Omega|u_{1,n}|^{p}|u_{2,n}|^{q}=|u_{1,\infty}|^{p}|u_{2,\infty}|^{q},$$
and
\[\mathcal{E}(u_{1,\infty},u_{2,\infty})=c_{\bar\alpha}\ \ \text{and}\ \
\|(u_{1,\infty},u_{2,\infty})\|^2_{H^1_0(\Omega;\R^2)}=
\|(\tau_{1,\infty},\tau_{2,\infty})\|^2_{H^1_0(\Omega;\C^2)}=\bar\alpha(\rho_1+\rho_2).\]
Then $(u_{1,\infty},u_{2,\infty})\in\mathcal{U}_{\bar\alpha}$, contradicting the fact that $c_{\bar\alpha}<\hat c_{\bar \alpha}$.
\end{proof}

\begin{proof}[End of the proof of Theorems \ref{critical} and
\ref{thm:stab}] Recall that the first paragraph of this section, we have to
prove that the set $G_{\bar\alpha}$ is (conditionally) orbitally stable. This is a
direct consequence of Lemma \ref{prop:stability} together with Lemma
\ref{lem:c_tilde}.
\end{proof}

\section{Non-existence results}\label{sec5}

\begin{lemma}\label{le51}
If $u_1,u_2\in C^2(\Omega)\cap C^1(\overline{\Omega} )$ and $(u_1,u_2)$ is a solution of the
system \eqref{eq:1}, then $(u_1,u_2)$ satisfies $P(u_1,u_2)=0,$ where
\begin{equation}\label{0510}
\aligned P(u_1,u_2):&=\frac{N-2}{2} \int_{\Omega}(|\nabla u_1|^2+|\nabla u_2|^2)dx+\frac{N}{2}
\int_{\Omega}(\omega_1u_1^2+\omega_2 u_2^2)\,dx\\
 & =\frac{N}{2}\int_{\Omega}(\mu_1 u_1^2(\log u_1^2-1)+\mu_2 u_2^2(\log u_2^2-1))+
\frac{2N}{p+q}\int_{\Omega}|u_1|^p|u_2|^q\,dx\\
&\ \ \ \ +\int_{\partial\Omega}\left(\left|\frac{\partial u_1}{\partial \nu}\right|^2+
\left|\frac{\partial u_2}{\partial \nu}\right|^2
\right)(x\cdot\nu)\,dS.\endaligned
\end{equation}
where $\frac{\partial u_i}{\partial \nu}$ is the outward normal derivative exterior
 to $\partial \Omega$
at the point $x\in\partial \Omega$, $i=1,2$.
\end{lemma}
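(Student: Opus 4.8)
The plan is to derive a Rellich--Pohozaev identity by testing each equation of \eqref{eq:1} against the radial vector field $x\cdot\nabla u_i$, integrating over $\Omega$, and summing over $i=1,2$. Since $(u_1,u_2)\in C^2(\Omega)\cap C^1(\overline{\Omega})$ is a classical solution with $u_i=0$ on $\partial\Omega$, all the integrations by parts below are justified, working if necessary on the smooth exhausting subdomains $\{x:\mathrm{dist}(x,\partial\Omega)>\e\}$ and letting $\e\to0$, and using the $C^1(\overline{\Omega})$-regularity to pass to the limit on $\partial\Omega$.

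First I would treat the principal part. Writing $\Delta u_i\,(x\cdot\nabla u_i)=\mathrm{div}\!\big(\nabla u_i\,(x\cdot\nabla u_i)\big)-\nabla u_i\cdot\nabla(x\cdot\nabla u_i)$ and using $\nabla u_i\cdot\nabla(x\cdot\nabla u_i)=|\nabla u_i|^2+\tfrac12\,x\cdot\nabla|\nabla u_i|^2$, the divergence theorem gives
\[
\int_\Omega(-\Delta u_i)(x\cdot\nabla u_i)\,dx=\frac{N-2}{2}\int_\Omega|\nabla u_i|^2\,dx+\frac12\int_{\partial\Omega}(x\cdot\nu)\left|\frac{\partial u_i}{\partial\nu}\right|^2\,dS,
\]
where on $\partial\Omega$ the Dirichlet condition forces $\nabla u_i=(\partial u_i/\partial\nu)\,\nu$, so that $x\cdot\nabla u_i=(x\cdot\nu)\,\partial u_i/\partial\nu$ and $|\nabla u_i|^2=|\partial u_i/\partial\nu|^2$ there; this is what produces the boundary integral in \eqref{0510}.

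Next I would rewrite each term on the right of \eqref{eq:1} as a divergence. For the linear term, $u_i\,(x\cdot\nabla u_i)=\tfrac12\,x\cdot\nabla(u_i^2)$, and integrating by parts with $u_i|_{\partial\Omega}=0$ yields $\int_\Omega\omega_i u_i(x\cdot\nabla u_i)=-\tfrac{N}{2}\omega_i\int_\Omega u_i^2$. For the logarithmic term the key observation is that $F(s):=\tfrac12 s^2(\log s^2-1)$, extended by $F(0)=0$, is $C^1$ with $F'(s)=s\log s^2$ and $F'(0)=0$; hence $F(u_i)\in C^1(\overline{\Omega})$, one has $u_i\log u_i^2\,(x\cdot\nabla u_i)=x\cdot\nabla F(u_i)$ pointwise, and since $F(0)=0$ the boundary term vanishes, giving $-\tfrac{N}{2}\int_\Omega u_i^2(\log u_i^2-1)$. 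Finally, with $G(u_1,u_2):=|u_1|^p|u_2|^q$ the two coupling terms assemble into $\tfrac{2}{p+q}\big(\partial_{u_1}G\,(x\cdot\nabla u_1)+\partial_{u_2}G\,(x\cdot\nabla u_2)\big)=\tfrac{2}{p+q}\,x\cdot\nabla G(u_1,u_2)$, whose integral is $-\tfrac{2N}{p+q}\int_\Omega|u_1|^p|u_2|^q$, the boundary term vanishing because $G=0$ on $\partial\Omega$. Summing the two tested equations and collecting terms then produces the asserted identity $P(u_1,u_2)=0$.

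The main obstacle is the logarithmic term, on account of both the non-smoothness of $s\log s^2$ at $s=0$ and its integrability. The resolution is that the correct object to differentiate is the primitive $F$, which is genuinely $C^1$ across the zero set of $u_i$: the chain rule $x\cdot\nabla F(u_i)=u_i\log u_i^2\,(x\cdot\nabla u_i)$ then holds everywhere (both sides vanishing where $u_i=0$), so the divergence theorem applies without modification. The finiteness of $\int_\Omega u_i^2(\log u_i^2-1)$ needed to carry this out on the bounded domain follows from $u_i\in C^1(\overline{\Omega})$ together with Lemma \ref{lemma:1}.
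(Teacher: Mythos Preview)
Your approach is exactly the one the paper takes: test each equation of \eqref{eq:1} with $x\cdot\nabla u_i$, rewrite every term as a divergence, apply the divergence theorem, and use that $u_i=0$ on $\partial\Omega$ forces $\nabla u_i=(\partial u_i/\partial\nu)\nu$ there; your treatment of the logarithmic term via the $C^1$ primitive $F(s)=\tfrac12 s^2(\log s^2-1)$ is in fact more careful than the paper's. One slip: your displayed identity for the principal part has the wrong sign---the correct formula (and the one the paper derives in \eqref{55}) is $\int_\Omega \Delta u_i\,(x\cdot\nabla u_i)=\tfrac{N-2}{2}\int_\Omega|\nabla u_i|^2+\tfrac12\int_{\partial\Omega}(x\cdot\nu)|\partial u_i/\partial\nu|^2$, so $\int_\Omega(-\Delta u_i)(x\cdot\nabla u_i)$ carries minus signs on both terms; fixing this sign and collecting gives the stated identity.
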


\begin{proof}
The proof is standard, see
\cite[Lemma 1.4, pp 171-172]{var}, for the reader' convenience,  we sketch the
proof here briefly. Let $u_{i}:=\frac{\partial u_i}{\partial x_{i}}$ and ${\bf n}$ be the unit outer
normal at $\partial\Omega$. Multiplying the first equation of \eqref{eq:1} (resp. the second equation
of \eqref{eq:1}) by $(x\cdot \nabla u_1)$ (resp. $(x\cdot \nabla u_2)$) and integrating on $\Omega$, we
obtain
\begin{equation}\label{51}
-\int_{\Omega}\Delta u_1(x\cdot\nabla u_1)+\omega_1\int_{\Omega} u_1(x\cdot\nabla u_1)
=\int_{\Omega}u_1\log u_1^2(x\cdot\nabla u_1 )+\frac{2p}{p+q}\int_{\Omega}|u_1|^{p-2}u_1|u_2|^{q}(x\cdot\nabla u_1)\end{equation}
and
\begin{equation}\label{52}
-\int_{\Omega}\Delta u_2(x\cdot\nabla u_2)+\omega_2\int_{\Omega} u_2(x\cdot\nabla u_2)
=\int_{\Omega}u_2\log u_2^2(x\cdot\nabla u_2 )+\frac{2q}{p+q}
\int_{\Omega}|u_1|^{p}|u_2|^{q-2}u_2(x\cdot\nabla u_2).
\end{equation}
Since
$$\aligned\hbox{div}\left((x\cdot\nabla u_i)\nabla u_i \right)&=\Delta u_i(x\cdot\nabla u_i)
+\sum_{k=1}^{N}\frac{\partial u_i}{\partial x_{k}}\frac{\partial}{\partial x_{k}}
\left(\sum_{j=1}^{N}x_{j}\frac{\partial u_i}{\partial x_{j}}\right)\\
&=\Delta u_i(x\cdot\nabla u_i)+\sum_{k=1}^{N}\left(\frac{\partial u_i}{\partial x_{k}}\right)^{2}
+\sum_{j,k=1}^{N}\frac{\partial u_i}{\partial x_{k}}x_{j}\frac{\partial^{2}u_i}{\partial x_{j}\partial x_{k}}\\
&=\Delta u_i(x\cdot\nabla u_i)+|\nabla u_i|^{2}+\frac{1}{2}\sum_{j=1}^{N}x_{j}\frac{\partial}
{\partial x_{j}} |\nabla u_i|^{2} ,\ \ i=1,2,
\endaligned$$
$$\hbox{div}\left(\frac{1}{2}|\nabla u_i|^{2}x\right)=\frac{N}{2}|\nabla u_i|^{2}+\frac{1}{2}
\sum_{j=1}^{N}x_{j}\frac{\partial}{\partial x_{j}} |\nabla u_i|^{2} ,\ \ i=1,2.$$
By the divergence theorem, we have
\begin{equation}\label{53}
\int_{\Omega}\left(\Delta u_i(x\cdot\nabla u_i)+|\nabla u_i|^{2}+\frac{1}{2}\sum_{j=1}^{N}x_{j}
\frac{\partial}{\partial x_{j}} |\nabla u_i|^{2} \right)dx=
\int_{\partial \Omega}(x\cdot\nabla u_i)\nabla u_i\cdot\nu dS,\ \ i=1,2,
\end{equation}
\begin{equation}\label{54}
\frac{N}{2}\int_{\Omega}|\nabla u_i|^2dx+\frac{1}{2}\int_{\Omega}\sum_{j=1}^{N}x_{j}
\frac{\partial}{\partial x_{j}} |\nabla u_i|^{2} dx=\frac{1}{2}
\int_{\partial \Omega}|\nabla u_i|^{2}x\cdot\nu dS,\ \ i=1,2.
\end{equation}
Using \eqref{53} and \eqref{54},  we obtain
\begin{equation}\label{55}
\int_{\Omega}\Delta u_i(x\cdot\nabla u_i)dx
=\frac{N-2}{2}\int_{\Omega}|\nabla u_i|^2dx+\int_{\partial\Omega}(x\cdot\nabla u_i)\nabla u_i
\cdot\nu dS-\frac{1}{2}\int_{\partial \Omega}u_i^2x\cdot\nu dS,\ \ i=1,2.
\end{equation}
Similarly, for $i=1,2$, direct computations yield
$$\hbox{div}(u_i^2x)= Nu_i^2+2u_i(x\cdot\nabla u_i),$$
$$\hbox{div}(u_i^2\log u_i^2x)=N u_i^2\log u_i^2+ 2u_i\log u_i^2(x\cdot\nabla u_i)
+2u_i (x\cdot\nabla u_i),$$
$$\hbox{div}(|u_1|^{p}|u_2|^{q}x)=N|u_1|^{p}|u_2|^{q}+p|u_1|^{p-2}u_1|u_2|^{q}
(x\cdot\nabla u_2)+q|u_1|^{p}|u_2|^{q-2}u_2(x\cdot\nabla u_2).$$
It follows from divergence theorem that
\begin{equation}\label{56}
N\int_{\Omega}u_i^2dx+2\int_{\Omega}
u_i(x\cdot\nabla u_i)dx=\int_{\partial \Omega}u_i^2x\cdot\nu dS,\ \ i=1,2,
\end{equation}
\begin{equation}\label{57}
N\int_{\Omega} u_i^2\log u_i^2+ 2\int_{\Omega}u_i(\log u_i^2+1)(x\cdot\nabla u_i)dx=\int_{\partial\Omega}  u_i^2\log u_i^2 x\cdot\nu dS,\ \ i=1,2,
	\end{equation}
\begin{equation}\label{58}
\aligned\ &\ \int_{\Omega}(N|u_1|^{p}|u_2|^{q}+p|u_2|^{q}|u_1|^{p-2}u_1(x\cdot\nabla u_1)+q|u_1|^{p}|u_2|^{q-2}u_2
(x\cdot\nabla u_2))dx\\
\ =&\ \int_{\partial \Omega}|u_1|^{p}|u_2|^{q}x\cdot\nu dS,\ \ i=1,2.\endaligned
\end{equation}
Plug \eqref{55}, \eqref{56}, \eqref{57} and \eqref{58} into \eqref{51} and \eqref{52}, we conclude that
$$\aligned\ &  \frac{N-2}{2} \int_{\Omega}(|\nabla u_1|^2+|\nabla u_2|^2)dx+\frac{N}{2}
\int_{\Omega}(\omega_1u_1^2+\omega_2 u_2^2)\,dx\\
\ =&\ \frac{N}{2}\int_{\Omega}(\mu_1 u_1^2(\log u_1^2-1)+\mu_2 u_2^2(\log u_2^2-1))+
\frac{2N}{p+q}\int_{\Omega}|u_1|^p|u_2|^q\,dx\\
\  &+\frac{1}{2}\int_{\partial \Omega}(2\mu_1 u_1^2+2\mu_2 u_2^2+\omega_1 u_1^2+\omega_2 u_2^2
+\mu_1 u_1^2\log u_1^2+\mu_2 u_2^2\log u_2^2)x\cdot\nu\,dS\\
\ & +\frac{2}{p+q}\int_{\partial \Omega}|u_1|^p|u_2|^q x\cdot\nu\,dS
+\int_{\partial\Omega}((x\cdot\nabla u_1)\nabla u_1\cdot\nu+(x\cdot\nabla u_2)\nabla u_2\cdot\nu)\,dS.\endaligned$$
Taking account of the fact that  $u_i=0$ and $x\cdot\nabla u_i
=x\cdot \nu\frac{\partial u_i}{\partial \nu}$ on $\partial\Omega$ $i=1,2$, we have
$$\aligned\ &\ \frac{N-2}{2} \int_{\Omega}(|\nabla u_1|^2+|\nabla u_2|^2)dx+\frac{N}{2}
\int_{\Omega}(\omega_1u_1^2+\omega_2 u_2^2)\,dx\\
\ =&\ \frac{N}{2}\int_{\Omega}(\mu_1 u_1^2(\log u_1^2-1)+\mu_2 u_2^2(\log u_2^2-1))+
\frac{2N}{p+q}\int_{\Omega}|u_1|^p|u_2|^q\,dx\\
\ & +\int_{\partial\Omega}\left(\left|\frac{\partial u_1}{\partial \nu}\right|^2+ \left|\frac{\partial u_2}{\partial \nu}\right|^2
\right)(x\cdot\nu)\,dS.\endaligned$$
\end{proof}

\begin{proof}[Proof of Theorem \ref{thm:5}]
Suppose that $u_1,\ u_2\not\equiv0$ and $(u_1,u_2)$ satisfies \eqref{eq:1}.
Multiplying the first equation of \eqref{eq:1} (resp. the second equation
of \eqref{eq:1}) by $u_1$ (resp. $u_2$) and integrating on $\Omega$, we
obtain
\begin{equation}\label{eq:61}
\int_{\Omega}(|\nabla u_1|^2+|\nabla u_2|^2+\omega_1u_1^2+\omega_2 u_2^2)=
\int_{\Omega}(\mu_1 u_1^2(\log u_1^2-1)+\mu_2 u_2^2(\log u_2^2-1))
+2\int_{\Omega}|u_1|^p|u_2|^q.
\end{equation}	
Since $\Omega$ is star-shaped with respect to $0\in\R$, we have
$x\cdot\nu>0$ for all $x\in\partial\Omega$. Thus
$\frac{\partial u_i}{\partial\nu}=0$ on $\partial\Omega$. By using Lemma \ref{le51}
and \eqref{eq:61}, we obtain
$$\aligned&\ \int_{\Omega}(|\nabla u_1|^2+|\nabla u_2|^2)
=\left(N-\frac{2N}{p+q}\right)\int_{\Omega}|u_1|^p|u_2|^q,\endaligned$$
and hence $u_1=u_2\equiv0$ since $p+q<2$.
\end{proof}

\section{Behavior of the solutions as $p+q\to 2^*$}\label{sec:segregation6}

In this section, we investigate the asymptotic behavior of the solutions for problem \eqref{eq:106}
on $\Omega=\R^N(N\geq4)$,
and by studying the behavior of system \eqref{eq:106} when $p+q$ tends to $2^*$. As a by-product, we
give a way to prove the existence of solutions to the
Br\'{e}zis-Nirenberg type problem on $\R^N$.

\begin{lemma}\label{le62}
$c_{p+q}>0$ is achieved.
\end{lemma}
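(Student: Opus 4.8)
The plan is to realize $c_{p+q}$ as a Nehari-type minimum and to reduce the positivity assertion to the attainment one. First I would record the algebraic identities that drive everything: writing $T=\int_{\R^N}(|\nabla u_1|^2+|\nabla u_2|^2)$, $W=\int_{\R^N}(\omega_1u_1^2+\omega_2u_2^2)$, $L=\int_{\R^N}(\mu_1u_1^2\log u_1^2+\mu_2u_2^2\log u_2^2)$, $M=\int_{\R^N}(\mu_1u_1^2+\mu_2u_2^2)$ and $P=\int_{\R^N}|u_1|^p|u_2|^q$, one has $\mathcal J_{p+q}=T+W-L-2P$ and $\mathcal I_{p+q}=\tfrac12\mathcal J_{p+q}+\Psi$, where $\Psi(u_1,u_2):=\tfrac12 M+(1-\tfrac{2}{p+q})P\ge0$. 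In particular $\mathcal I_{p+q}=\Psi$ on $\mathcal N_{p+q}$. Since any $(\bar u_1,\bar u_2)\in\mathcal N_{p+q}$ is nontrivial and $\mu_i>0$ forces $M(\bar u_1,\bar u_2)>0$, once attainment is known the minimizer satisfies $c_{p+q}=\Psi(\bar u_1,\bar u_2)>0$; thus it suffices to prove that the infimum is attained.

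Next I would set up the fibering (projection onto $\mathcal N_{p+q}$). For fixed $(u_1,u_2)\ne(0,0)$ a direct computation gives $\mathcal J_{p+q}(tu_1,tu_2)=t^2(T+W-L)-t^2(\log t^2)M-2t^{p+q}P$, so $h(t):=\mathcal J_{p+q}(tu_1,tu_2)/t^2=(T+W-L)-2M\log t-2t^{p+q-2}P$ is strictly decreasing (here $M>0$ is essential), tends to $+\infty$ as $t\to0^+$ and to $-\infty$ as $t\to\infty$; hence it has a unique zero $t^*$, with $t^*\le1\Leftrightarrow\mathcal J_{p+q}(u_1,u_2)\le0$. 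This shows $\mathcal N_{p+q}\ne\varnothing$ and that every nontrivial pair has a unique Nehari projection; moreover $t\mapsto\Psi(tu_1,tu_2)$ is strictly increasing. (The logarithmic term is finite on $\mathcal H(\R^N)$ by definition, so $h$ is well defined.)

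Then I would take a minimizing sequence $(u_{1,n},u_{2,n})\subset\mathcal N_{p+q}$. From $\mathcal I_{p+q}=\Psi\to c_{p+q}$ and $\Psi\ge0$, both $M(u_n)$ and $P(u_n)$ stay bounded, so $\|u_{i,n}\|_2$ is bounded; inserting this together with the logarithmic Sobolev inequality \eqref{eq:5} into the Nehari identity $T=2P-W+L$ (choosing the constant there so that the coefficient of $T$ is $<1$) bounds $T$ as well, so the sequence is bounded in $H^1(\R^N)$. To recover compactness I would exploit the translation invariance of \eqref{eq:106}: applying Lions' vanishing lemma to $|u_{1,n}|^2+|u_{2,n}|^2$ and translating if necessary, I obtain $u_{i,n}(\cdot+y_n)\rightharpoonup\bar u_i$ with $(\bar u_1,\bar u_2)\ne(0,0)$. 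The main obstacle is excluding the vanishing alternative: the logarithmic nonlinearity both helps and hurts here — it destroys $C^1$-smoothness, but the divergence of $-s^2\log s^2$ near $s=0$ makes spread-out, low-amplitude profiles energetically expensive, and this, combined with \eqref{eq:5} and the strict subcriticality $2<p+q<2^*$, is what rules out vanishing. I expect this to be the most delicate step.

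Finally, with a nontrivial weak limit in hand I would run a Br\'ezis--Lieb decomposition. Writing $v_{i,n}=u_{i,n}(\cdot+y_n)-\bar u_i\rightharpoonup0$, weak convergence for $T$ and $W$, Lemma \ref{le332} for $L$ and Lemma \ref{32} for $P$ yield $\mathcal J_{p+q}(u_n)=\mathcal J_{p+q}(\bar u)+\mathcal J_{p+q}(v_n)+o(1)$ and $\Psi(u_n)=\Psi(\bar u)+\Psi(v_n)+o(1)$. If $\mathcal J_{p+q}(\bar u)>0$ then $\mathcal J_{p+q}(v_n)<0$ for large $n$, so projecting $v_n$ with some $t_n<1$ gives $c_{p+q}\le\Psi(t_nv_n)\le\Psi(v_n)=c_{p+q}-\Psi(\bar u)+o(1)<c_{p+q}$, a contradiction (using $\Psi(\bar u)>0$); hence $\mathcal J_{p+q}(\bar u)\le0$. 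Projecting $\bar u$ with $t^*\le1$ then gives $c_{p+q}\le\Psi(t^*\bar u)\le\Psi(\bar u)\le\lim\Psi(u_n)=c_{p+q}$, and strict monotonicity of $t\mapsto\Psi(t\bar u)$ forces $t^*=1$. Thus $(\bar u_1,\bar u_2)\in\mathcal N_{p+q}$ with $\mathcal I_{p+q}(\bar u_1,\bar u_2)=c_{p+q}$, and $c_{p+q}=\Psi(\bar u_1,\bar u_2)>0$ by the first paragraph, which completes the proof.
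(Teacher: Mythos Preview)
Your overall architecture coincides with the paper's: Nehari constraint, one-parameter fibering to produce a unique projection, boundedness of a minimizing sequence via the identity $\mathcal I_{p+q}=\tfrac12\mathcal J_{p+q}+\Psi$ together with the logarithmic Sobolev inequality, Lions' concentration--compactness to defeat translation noncompactness, and a Br\'ezis--Lieb splitting plus reprojection to conclude. Your final dichotomy ($\mathcal J_{p+q}(\bar u)>0$ versus $\le0$) is exactly the paper's Step~5, and your version is in fact slightly cleaner because you use only the monotonicity of $t\mapsto\Psi(tu)$ and avoid having to show $\limsup t_n<1$.

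The one place where your write-up is genuinely weaker than the paper's is the exclusion of vanishing. Your heuristic that ``the divergence of $-s^2\log s^2$ near $s=0$'' makes low-amplitude profiles expensive is incorrect: $s^2\log s^2\to0$ as $s\to0$ and $|s^2\log s^2|\le e^{-1}$ on $[0,1]$ (this is Lemma~\ref{lemma:1}(i)). The paper does not rely on any such intuition. Instead it \emph{first} establishes, independently of attainment, a uniform lower bound on the Nehari set (its Step~3): from $u^2\log u^2\le C|u|^{2^*}$ for $|u|\ge1$ and $u^2\log u^2\le0$ for $|u|\le1$ one gets $T+W\le C(|u_1|_{2^*}^{2^*}+|u_2|_{2^*}^{2^*})+2|u_1|_{p+q}^{p}|u_2|_{p+q}^{q}$ on $\mathcal N_{p+q}$, and Sobolev then forces $T+W\ge C_1>0$. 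Vanishing would give $u_{i,n}\to0$ in every $L^s$ with $2<s<2^*$, hence (using $u^2\log u^2\le C_\varepsilon|u|^{2+\varepsilon}$ on $\{|u|\ge1\}$) the right-hand side of the Nehari identity tends to $0$, contradicting that lower bound. Because you postpone positivity of $c_{p+q}$ until after attainment, you do not have this lower bound available at the moment you need it; you should either insert the paper's Step~3 argument before invoking Lions, or give a self-contained exclusion of vanishing along the same lines.
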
	

\begin{proof} We divide the proof into five steps.
	
{\bf Step 1:} For any
$(u_1,u_2)\in \mathcal{H}(\R^N)\backslash\{(0,0)\}$, there is a unique
$t_0>0$ such that $$\mathcal{I}_{p+q}(t_0 u_1,t_0 u_2)=
\max_{t>0}\mathcal{I}_{p+q}( tu_1, tu_2).$$
 Moreover, if  $\mathcal{J}_{p+q}(u_1,u_2)<0$, we obtain $ t_0<1$.

 For any
 $(u_1,u_2)\in \mathcal{H}(\R^N)\backslash\{(0,0)\}$,
 $$\aligned\mathcal{I}_{p+q}(tu_1, tu_2)&=\frac{t^2}{2}
 \int_{\R^N}(|\nabla u_1|^2+|\nabla u_2|^2+\omega_1 u_1^2+\omega_2 u_2^2)-\frac{t^2}{2}\int_{\R^N}(\mu_1u_1^2(\log t^2u_1^2-1))\\
 &\ \ \ \ -\frac{t^2}{2}\int_{\R^N}(\mu_2u_2^2(\log t^2u_2^2-1))-\frac{2}{p+q}t^{p+q}
\int_{\R^N}|u_1|^{p}|u_2|^{q}.\endaligned$$
Denote $h(t):=\mathcal{I}_{p+q}(tu_1,tu_2)$. Since $p+q>2,$ we see that
$h(t)>0$ for $t>0$ small enough and $h(t)\rightarrow-\infty$ as
$t\rightarrow+\infty,$ this implies that $h(t)$ attains its maximum.
We compute
$$\aligned h^\prime(t)&=t
\int_{\R^N}(|\nabla u_1|^2+|\nabla u_2|^2+\omega_1 u_1^2+\omega_2 u_2^2)-t(\log t^2-1)\int_{\R^N}(\mu_1u_1^2+\mu_2u_2^2)\\
&\ \ \ \ -t\int_{\R^N}(\mu_1u_1^2\log u_1^2+\mu_2u_2^2\log u_2^2)-2t^{p+q-1}
\int_{\R^N}|u_1|^{p}|u_2|^{q}.\endaligned$$
It is easy to see that  $h$ has a unique critical point $\bar{t}$.
In particular, for any $u_1,u_2\neq0$, $\bar{t}>0$ is the unique
value such that $(\bar{t}u_1,\bar{t}u_2)$ belongs to $\mathcal{N}_{p+q},$ and
$\mathcal{I}_{p+q}(\bar{t}u_1,\bar{t}u_2)$ reaches a global maximum for
$t=\bar{t}$. Moreover, if
$\mathcal{J}_{p+q}(u_1,u_2)<0$, we claim that $0<\bar{t}<1$. Indeed, if
$\mathcal{J}_{p+q}(u_1,u_2)<0$, it follows from
$$\aligned\mathcal{J}_{p+q}(u_1,u_2)&:=\int_{\R^N}\left(|\nabla u_1|^2+|\nabla u_2|^2
+\omega_1 u_1^2+\omega_2 u_2^2
-\mu_1 u_1^2\log u_1^2-2|u_1|^{p}|u_2|^{q}-\mu_2 u_2^2\log u_2^2\right)<0,\endaligned $$
and
$$\bar t^2\int_{\R^N}\left(|\nabla u_1|^2+|\nabla u_2|^2
+\omega_1 u_1^2+\omega_2 u_2^2
-\mu_1 u_1^2\log u_1^2 -\mu_2 u_2^2\log u_2^2\right)-2\bar t^{p+q}\int_{\R^N}|u_1|^{p}|u_2|^{q}=0,$$
that
$$\bar{t}^2\int_{\R^N}\left( \mu_1 u_1^2\log \bar{t}^2+\mu_2 u_2^2\log \bar{t}^2\right)
+2(\bar{t}^{p+q}-t^2)\int_{\R^N}
|u_1|^{p}|u_2|^{q}<0,$$
which implies that $0<\bar{t}<1$. This finishes the proof.

{\bf Step 2:} Let $\{(u_{1,n},u_{2,n})\}\subset\mathcal{N}_{p+q}$ be a
 minimizing sequence of $c_{p+q}$, then $\{(u_{1,n},u_{2,n})\}$ is 
 bounded in $\mathcal{H}(\R^N)$.
  Using Step 1, we know that $\mathcal{N}_{p+q}$ is
nonempty. Threfore,
$$\aligned c_{p+q}&=\lim_{n\to\infty}\mathcal{I}_{p+q}(u_{1,n},u_{2,n}) \\
&=\lim_{n\to\infty}\left(\mathcal{I}_{p+q}(u_{1,n},u_{2,n})
-\frac 12\mathcal{J}_{p+q}(u_{1,n},u_{2,n}) \right)\\
&=\lim_{n\to\infty}\left(\frac 12\int_{\R^N}(\mu_1 u_{1,n}^2+\mu_2 u_{2,n}^2)+\left(1-\frac{2}{p+q}
\right)\int_{\R^N}|u_{1,n}|^p|u_{2,n}|^q\right).\endaligned$$
Thus, $\{u_{i,n}\} $ is bounded in $L^2(\R^N)$ and in $L^{p+q}(\R^N)$. Taking
$a>0$ small enough in \eqref{eq:5}
yields
$$\int_{\R^N}u^2\log u^2\leq \frac 12|\nabla u|_2^2+C_1(\log|u|_2^2+1)|u|_2^2,\ \ \text{for all}\ \ u\in H^1(\R^N).$$
Together with $\{(u_{1,n},u_{2,n})\}\subset
\mathcal{N}_{p+q}$, H\"older inequality, we have
$$\aligned \ &\ \int_{\R^N}(|\nabla u_{1,n}|^2+\omega_1|u_{1,n}|^2+|\nabla u_{2,n}|^2+\omega_2|u_{2,n}|^2)\\
\ \leq&\  \frac 12(|\nabla u_{1,n}|_2^2+|\nabla u_{2,n}|_2^2)
+C\left(|u_{1,n}|_2^2(\log|u_{1,n}|_2^2+1)+ |u_{2,n}|_2^2(\log|u_{2,n}|_2^2+1) \right)
+2\int_{\R^N} |u_{1,n}|^p|u_{2,n}|^q\\
\ \leq &\ \frac 12(|\nabla u_{1,n}|_2^2+|\nabla u_{2,n}|_2^2)
+C\left(|u_{1,n}|_2^2(\log|u_{1,n}|_2^2+1)+ |u_{2,n}|_2^2(\log|u_{2,n}|_2^2+1) \right)+2 |u_{1,n}|_{p+q}^p|u_{2,n}|_{p+q}^q.\endaligned$$
This implies that  $\{(u_{1,n},u_{2,n})\}$ is bounded in $\mathcal{H}(\R^N)$.

{\bf Step 3:} $c_{p+q}>0$. According to Lemma \ref{lemma:1}, H\"older inequality inequality \eqref{1.12} we have
\begin{equation}\label{eq23}
\aligned \ &\ \int_{\R^N}(|\nabla u_{1,n}|^2+\omega_1|u_{1,n}|^2+|\nabla u_{2,n}|^2+\omega_2|u_{2,n}|^2)\\
\ =&\ \mu_1\left(\int_{u_{1,n}^2\geq1}u_{1,n}^2\log u_{1,n}^2
+\int_{u_{1,n}^2\leq1}u_{1,n}^2\log u_{1,n}^2\right)\\
\ &\ +\mu_2 \left(\int_{u_{2,n}^2
\geq1}u_{2,n}^2\log u_{2,n}^2
+\int_{u_{2,n}^2\leq1}u_{2,n}^2\log u_{2,n}^2\right)+ 2\int_{\R^N}|u_{1,n}|^{p}|u_{2,n}|^{q}\\
\ \le&\ \mu_1 \int_{u_{1,n}^2\geq1}u_{1,n}^2\log u_{1,n}^2
+ \mu_2 \int_{u_{2,n}^2\geq1}u_{2,n}^2\log u_{2,n}^2 +2\int_{\R^N}|u_{1,n}|^{p}|u_{2,n}|^{q}\\
\ \le&\  \frac{(N-2)}{2e}(\mu_1|u_{1,n}|_{2^*}^{{2^*}}+\mu_2|u_{2,n}|_{2^*}^{2^*})+2|u_{1,n}|_{p+q}^{p}|u_{2,n}|_{p+q}^{q}.
\endaligned
\end{equation}
From the Sobolev embedding theorem, we deduce that
\begin{equation}\label{eq24}
\int_{\R^N}(|\nabla u_{1,n}|^2+\omega_1|u_{1,n}|^2+|\nabla u_{2,n}|^2
+\omega_2|u_{2,n}|^2)\geq C_1>0.
\end{equation}
Therefore,
$$\int_{\R^N}|u_{1,n}|^{p}|u_{2,n}|^{q}\geq C_2>0. $$
Thus
$$\aligned c_{p+q}&=
\lim_{n\to\infty}\left(\frac 12\int_{\R^N}(\mu_1 u_{1,n}^2+\mu_2 u_{2,n}^2)
+\left(1-\frac{2}{p+q}\right)\int_{\R^N}|u_{1,n}|^p|u_{2,n}|^q\right)\geq C>0. \endaligned$$

{\bf Step 4:} $u_1\neq 0$ or $u_2\neq 0$.

We claim that there exist $\delta>0$ and $\{x_n\}\subset \R^N$
such that
\begin{equation}\label{eq26}
\liminf_{n\to\infty}\int_{B_1(x_n)}|u_n|^2>\delta>0,
\end{equation}
where $B_1(y):=\{z\in\R^N :|y-z|<1\}$.
Otherwise,
$$\lim_{n\to\infty}\sup_{y\in\R^N}\int_{B_1(y)}|u_n|^2=0,$$
then $u_n\to0$ in $L^s(\R^N)$ for any $s\in(2,2^*)$.
It follows by \eqref{eq23} that
$$\lim_{n\to\infty}\int_{\R^N}(|\nabla u_{1,n}|^2
+\omega_1|u_{1,n}|^2+|\nabla u_{2,n}|^2+\omega_2|u_{2,n}|^2)=0,$$
which is contradict to \eqref{eq24}. Therefore,
\eqref{eq26} holds.

Define $\nu_{1,n}=u_{1,n}(\cdot+x_{n})$, $\nu_{2,n}=u_{2,n}(\cdot+x_{n})$, where $x_{n}$ is given in
\eqref{eq26}. Since
$(u_{1,n},u_{2,n})\rightharpoonup(u_1,u_2)$ in $\mathcal{H}(\mathbb R^{N})$.
We note that $(\nu_{1,n},\nu_{2,n})$ is still a bounded minimizing sequence of
$c_{p+q}$. Then, up to a subsequence, $(\nu_{1,n},\nu_{2,n})$ is bounded in $\mathcal{H}(\mathbb R^N)$
and we may
assume that $(\nu_{1,n},\nu_{2,n})\rightharpoonup(\nu_1,\nu_2)$ in
$\mathcal{H}(\mathbb R^N)$. Moreover,
by compactness, we have that $\nu_{i,n}\to\nu_i$ in $L_{loc}^2(\mathbb R^N),\ i=1,2$.
It follows from \eqref{eq26} that $\nu_i\neq 0$.
Moreover, Lemma \ref{le332} implies that $\nu_i^2\log \nu_i^2\in L^1(\R^N),\ i=1,2.$

\vskip0.12in

{\bf Step 5:} $(\nu_1,\nu_2)\in\mathcal{N}_{p+q}$ and $\mathcal{I}_{p+q}(\nu_1,\nu_2)=c_{p+q}$.

First, assume by contradiction that
$\mathcal{J}_{p+q}(\nu_1,\nu_2)<0$. By using Step 1, we can deduce that there exists $0<t<1$
such that $\mathcal{J}_{p+q}(t\nu_1,t\nu_2)=0$. Therefore
$$\aligned c_{p+q}&\leq\mathcal{I}_{p+q}(t\nu_1,t\nu_2)\\
&=\left(\frac 12 t^2\int_{\R^N}(\mu_1\nu_1^2+\mu_2\nu_2^2)
+\left(t^{p+q}-\frac{2}{p+q}t^{p+q}\right)\int_{\R^N}|\nu_1|^p|\nu_2|^q\right)\\
&<\liminf_{n\to\infty}\left(\frac 12\int_{\R^N}(\mu_1 \nu_{1,n}^2+\mu_2 \nu_{2,n}^2)
+\left(1-\frac{2}{p+q}\right)\int_{\R^N}|\nu_{1,n}|^p|\nu_{2,n}|^q\right)\\
&=c_{p+q},\endaligned$$
which is impossible. On the other hand, assume that
$\mathcal{J}_{p+q}(\nu_1,\nu_2)>0$.
Set $\xi_{n}:=\nu_{1,n}-\nu_1,\gamma_{n}:=\nu_{2,n}-\nu_2$, by Lemma \ref{le332}
and Br\'{e}zis-Lieb lemma \ref{eq:BL}, we may obtain
\begin{equation}\label{eq3.7}
\mathcal{J}_{p+q}(\nu_{1,n},\nu_{2,n})=\mathcal{J}_{p+q}(\nu_1,\nu_2)
+\mathcal{J}_{p+q}(\xi_{n},\gamma_{n})+o_{n}(1).
\end{equation}
Then $\limsup\limits_{n\rightarrow\infty}\mathcal{J}_{p+q}(\xi_{n},\gamma_{n})<0$.
By Step 1, there exists $t_{n}\in(0,1)$ such that
$(t_{n}\xi_{n},t_{n}\gamma_{n})\in \mathcal{N}_{p+q}.$
Furthermore, one has that $\limsup\limits_{n\rightarrow\infty}t_{n}<1$,
otherwise, along a subsequence, $t_{n}\rightarrow1$ and hence
$\mathcal{J}_{p+q}(\xi_{n},\gamma_{n})
=\mathcal{J}_{p+q}(t_{n}\xi_{n},t_{n}\gamma_{n})+o_{n}(1)=o_{n}(1)$,
a contradiction.
It follows from $(\nu_{1,n},\nu_{2,n})\in\mathcal{N}_{p+q}$ and \eqref{eq3.7}
that
$$\aligned &\ c_{p+q}+o_{n}(1)\\
 =&\ \mathcal{I}_{p+q}(\nu_{1,n},\mu_{2,n})-\frac{1}{2}\mathcal{J}_{p+q}(\nu_{1,n},\mu_{2,n})\\
 =&\ \left(\frac 12\int_{\R^N}(\mu_1 \nu_{1,n}^2+\mu_2 \nu_{2,n}^2)+
 \left(1-\frac{2}{p+q}\right)\int_{\R^N}|\nu_{1,n}|^p|\nu_{2,n}|^q\right)\\
 \geq&\ \left(\frac 12\int_{\R^N}(\mu_1 (\nu_1^2+\xi_n^2)+\mu_2(\nu_{2}^2+\nu_n^2))+
 \left(1-\frac{2}{p+q}\right)\int_{\R^N}(|\nu_{1}|^p|\nu_{2}|^q+|\xi_n|^p|\nu_n|^q)\right)\\
 =&\ \mathcal{I}_{p+q}(\nu_{1},\nu_{2})-\frac{1}{2}\mathcal{J}_{p+q}(\nu_{1},\nu_{2})
 +\mathcal{I}_{p+q}(\xi_{n},\nu_{n})-\frac{1}{2}\mathcal{J}_{p+q}(\xi_{n},\nu_{n})\\
 >&\ \mathcal{I}_{p+q}(\nu_{1},\nu_{2})-\frac{1}{2}\mathcal{J}_{p+q}(\nu_{1},\nu_{2})
 +\mathcal{I}_{p+q}(t_n\xi_{n},t_n\nu_{n})-\frac{1}{2}\mathcal{J}_{p+q}(t_n\xi_{n},t_n\nu_{n})\\
 =&\ \mathcal{I}_{p+q}(t_n\xi_{n},t_n\nu_{n})+
 \left(\frac 12\int_{\R^N}(\mu_1 \nu_{1}^2+\mu_2 \nu_{2}^2)+
 \left(1-\frac{2}{p+q}\right)\int_{\R^N}|\nu_{1}|^p|\nu_{2}|^q\right) \\
\geq&\ c_{p+q}, \endaligned$$
 which is also a contradiction. Thus, we deduce that $\mathcal{J}_{p+q}(\nu_1,\nu_2)=0$, and then
 $(\nu_1,\nu_2)\in\mathcal{N}_{p+q}$. By using Fatou's Lemma and
 $(\nu_{1,n},\nu_{2,n})\in\mathcal{N}_{p+q}$, we may get
 $$\aligned c_{p+q}&\leq\mathcal{I}_{p+q}(t\nu_1,t\nu_2)\\
&=
\left(\frac 12 t^2\int_{\R^N}(\mu_1\nu_1^2+\mu_2\nu_2^2)
+\left(t^{p+q}-\frac{2}{p+q}t^{p+q}\right)\int_{\R^N}|\nu_1|^p|\nu_2|^q\right)\\
&\leq\liminf_{n\to\infty}\left(\frac 12\int_{\R^N}(\mu_1 \nu_{1,n}^2+\mu_2 \nu_{2,n}^2)
+\left(1-\frac{2}{p+q}\right)\int_{\R^N}|\nu_{1,n}|^p|\nu_{2,n}|^q\right)\\
&=c_{p+q}.\endaligned$$
Therefore, $\mathcal{I}_{p+q}(\nu_1,\nu_2)=c_{p+q}$.
 \end{proof}

\begin{lemma}\label{61}
Let $(u_{1,p+q},u_{2,p+q})$ be solutions of the system \eqref{eq:106}, that is,
\begin{equation}\label{61}
\aligned\ &\ \int_{\R^N}(\nabla u_{1,p+q}\nabla\varphi_1
+\nabla u_{2,p+q}\nabla\varphi_2+\omega_1 u_{1,p+q}\varphi_1+\omega_2 u_{2,p+q}
\varphi_2)\\
\ &\ -\int_{\R^N}\bigg(\mu_1 u_{1,p+q}\varphi_1\log u_{1,p+q}^2
+\mu_2 u_{2,p+q}\varphi_2\log u_{2,p+q}^2+\frac{2p}{p+q}|u_{2,p+q}|^{q}|u_{1,p+q}|^{p-2}u_{1,p+q}\varphi_1\\
\ &\
+\frac{2q}{p+q}|u_{1,p+q}|^{p}|u_{2,p+q}|^{q-2}u_{2,p+q}\varphi_2\bigg)=0\endaligned
\end{equation}
for all $\varphi_1,\varphi_2\in C_0^\infty(\R^N)$. For $2<p_n+q_n<2^*$ with
$p_n+q_n\to2^*$, assume that $u_{i,n}:=u_{i,p_n+q_n}\rightharpoonup u_i,\ i=1,2$
in $H^1(\R^N)$, then $u_i^2\log u_i^2\in L^1(\R^N),\ i=1,2$ and satisfies
\begin{equation}\label{62}
\aligned \ &\ \int_{\R^N}(\nabla u_1\nabla\varphi_1+
\nabla u_2\nabla\varphi_2+\omega_1\varphi_1+\omega_2\varphi_2)-\int_{\R^N}\bigg(\mu_1 u_1\varphi_1\log u_1^2+\mu_2 u_2\varphi_2\log u_2^2\\
\ &\
+\frac{2p}{2^*}|u_2|^{q}|u_1|^{p-2}u_1\varphi_1+\frac{2q}{2^*}|u_1|^{p}|u_2|^{q-2}u_2\varphi_2\bigg)=0,\quad\quad p+q=2^*\endaligned
\end{equation}
for all $\varphi_1,\varphi_2\in C_0^\infty(\R^N)$. Moreover,
$\mathcal{I}_{2^*}(u_1,u_2)\leq\liminf\limits_{n\to\infty}\mathcal{I}_{p_n+q_n}(u_{1,n},u_{2,n})$.
\end{lemma}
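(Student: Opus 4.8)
The plan is to pass to the limit in the weak formulation satisfied by $(u_{1,p_n+q_n},u_{2,p_n+q_n})$, term by term, and then read off the two remaining assertions. Passing to a subsequence I may assume $p_n\to p$, $q_n\to q$ with $p+q=2^*$; since the solutions are bounded in $H^1(\R^N;\R^2)$, Rellich on balls together with a diagonal extraction gives $u_{i,n}\to u_i$ strongly in $L^s_{\loc}(\R^N)$ for $s\in[1,2^*)$ and a.e.\ in $\R^N$. Fix $\varphi_i\in C_0^\infty(\R^N)$ with $\operatorname{supp}\varphi_i\subset K$, $K$ bounded. The linear terms converge by weak $H^1$-convergence, so the issue is the two nonlinearities. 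For the logarithmic terms I would use a Vitali argument on $K$: by the elementary bound $|s\log s^2|\le C(|s|^{1-\delta}+|s|^{1+\delta})$ used in Lemma \ref{le3321} and the uniform $L^{2^*}(K)$-bound from Sobolev, the integrands $\mu_i u_{i,n}\varphi_i\log u_{i,n}^2$ are uniformly integrable on $K$, and together with the a.e.\ convergence this yields $\int_K\mu_i u_{i,n}\varphi_i\log u_{i,n}^2\to\int_K\mu_i u_i\varphi_i\log u_i^2$.

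For the coupling terms the only extra difficulty is the moving exponents. Here I would note that $|u_{2,n}|^{q_n}|u_{1,n}|^{p_n-1}$ is bounded in $L^{r_n}(K)$ with $r_n=\frac{2^*}{p_n+q_n-1}$, which stays bounded away from $1$: applying H\"older with the conjugate pair $\frac{p_n+q_n-1}{q_n}$, $\frac{p_n+q_n-1}{p_n-1}$ (legitimate since $p,q>1$) gives $\big\| |u_{2,n}|^{q_n}|u_{1,n}|^{p_n-1}\big\|_{L^{r_n}(K)}^{r_n}\le \|u_{2,n}\|_{2^*}^{q_nr_n}\|u_{1,n}\|_{2^*}^{(p_n-1)r_n}$, which is uniformly bounded. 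Hence these integrands are uniformly integrable on $K$; since $p_n\to p$, $q_n\to q$ and $u_{i,n}\to u_i$ a.e., they converge a.e.\ to $|u_2|^q|u_1|^{p-2}u_1\varphi_1$ (resp.\ the symmetric expression), and Vitali's theorem plus $\frac{2p_n}{p_n+q_n}\to\frac{2p}{2^*}$ gives convergence of the integrals. Combining the three steps produces the limiting identity \eqref{62}.

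To obtain $u_i^2\log u_i^2\in L^1(\R^N)$ I would first bound $\{u_{i,n}^2\log u_{i,n}^2\}$ in $L^1(\R^N)$. The positive part is controlled by Lemma \ref{lemma:1}$(ii)$ and Sobolev, $\int(u_{i,n}^2\log u_{i,n}^2)^+\le\frac{N-2}{2e}\int|u_{i,n}|^{2^*}\le C$; and the Nehari identity $\mathcal{J}_{p_n+q_n}(u_{1,n},u_{2,n})=0$ gives that $\mu_1\int u_{1,n}^2\log u_{1,n}^2+\mu_2\int u_{2,n}^2\log u_{2,n}^2$ equals the bounded quantity $\int(|\nabla u_{1,n}|^2+|\nabla u_{2,n}|^2+\omega_1u_{1,n}^2+\omega_2u_{2,n}^2)-2\int|u_{1,n}|^{p_n}|u_{2,n}|^{q_n}$. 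Since $\mu_i>0$, the bound on the positive parts forces each negative part, hence each $\|u_{i,n}^2\log u_{i,n}^2\|_{L^1}$, to be bounded. Writing $s^2\log s^2=B(s)-A(s)$ with the nonnegative convex $A,B$ of Lemma \ref{le332} and applying Fatou to $\int A(|u_{i,n}|)$ and $\int B(|u_{i,n}|)$ then yields $A(|u_i|),B(|u_i|)\in L^1(\R^N)$, whence $u_i^2\log u_i^2\in L^1(\R^N)$.

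For the energy inequality I would use that $(u_{1,n},u_{2,n})\in\mathcal{N}_{p_n+q_n}$, so $\mathcal{I}_{p_n+q_n}(u_{1,n},u_{2,n})=\frac12\int(\mu_1u_{1,n}^2+\mu_2u_{2,n}^2)+(1-\frac2{p_n+q_n})\int|u_{1,n}|^{p_n}|u_{2,n}|^{q_n}$. As $\mu_i>0$ and $1-\frac2{p_n+q_n}\to1-\frac2{2^*}>0$, Fatou's lemma gives $\liminf_n\mathcal{I}_{p_n+q_n}(u_{1,n},u_{2,n})\ge\frac12\int(\mu_1u_1^2+\mu_2u_2^2)+(1-\frac2{2^*})\int|u_1|^p|u_2|^q$, and the right-hand side equals $\mathcal{I}_{2^*}(u_1,u_2)-\frac12\mathcal{J}_{2^*}(u_1,u_2)$; since testing \eqref{62} against $(u_1,u_2)$ gives $\mathcal{J}_{2^*}(u_1,u_2)=0$, this is exactly $\mathcal{I}_{2^*}(u_1,u_2)$. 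I expect the genuine obstacles to be two. First, the uniform integrability of the coupling terms in the regime $p_n+q_n\to2^*$, where the exponent $r_n>1$ has the smallest margin, i.e.\ precisely the Sobolev-critical borderline. Second, the justification of $\mathcal{J}_{2^*}(u_1,u_2)=0$, which requires extending the weak identity \eqref{62} from $C_0^\infty$ to the test pair $(u_1,u_2)$ itself by approximating in $H^1\cap L^{2^*}$ while controlling the non-smooth, only globally-$L^1$ logarithmic term via truncation, and rests on the integrability just established.
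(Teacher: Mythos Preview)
Your proposal is correct and follows essentially the same route as the paper's proof: bound $\{u_{i,n}^2\log u_{i,n}^2\}$ in $L^1(\R^N)$ via Lemma~\ref{lemma:1}(ii) and the Nehari identity $\mathcal{J}_{p_n+q_n}=0$, deduce $u_i^2\log u_i^2\in L^1$ by the $A/B$ decomposition and Fatou (exactly as in Lemma~\ref{le332}), pass to the limit in the weak formulation, and obtain the energy inequality from the Nehari rewriting together with Fatou. In fact your write-up is more careful than the paper's, which simply asserts that ``one can verify \eqref{62}'' and does not spell out the Vitali/uniform-integrability argument for the moving-exponent coupling terms or the extension of the weak identity to the test pair $(u_1,u_2)$---both points you correctly single out and handle.
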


\begin{proof}  It is easy to see that ${(u_{1,n},u_{2,n})}$ is bounded in
$\mathcal{H}(\R^N)$ since $(u_{1,n},u_{2,n})\rightharpoonup (u_1,u_2)$ in
$\mathcal{H}(\R^N)$. Hence
\begin{equation}\label{63}
\int_{\R^N}|u_{i,n}|^2\log u_{i}^2|\leq \frac{(N-2)}{2e}|u_{i,n}|^{2^*}\leq C<+\infty,\ \ i=1,2.
\end{equation}	
On the other hand, $(u_{1,n},u_{2,n})$ satisfying
\begin{equation}\label{64}
\aligned\ &\ \int_{\R^N}(|\nabla u_{1,n}|^2+
|\nabla u_{2,n}|^2+\omega_1 u_{1,n}+\omega_2 u_{2,n}-\mu_1 u_{1,n}^2\log u_{1,n}^2-
\mu_2 u_{2,n}^2\log u_{2,n}^2-2|u_{1,n}|^{p}|u_{2,n}|^{q})=0.\endaligned
\end{equation}
Combining this with \eqref{63} implies that $\{u_{i,n}^2\log u_{i,n}^2\}$ is bounded
in $L^1(\R^N)$. Thus, we can deduce from Lemma \ref{lemma:1} that
$u_i^2\log u_i^2\in L^1(\R^N),\ i=1,2$.
By using the fact $(u_{1,n},u_{2,n})\rightharpoonup(u_1,u_2)$ in $\mathcal{H}(\R^N)$, one can verify
\eqref{61}. By Lemma \ref{le332}, Sobolev embedding and weakly lower semi-continuity we have
$$\aligned \mathcal{I}_{2^*}(u_{1},u_{2})
&= \frac 12\int_{\R^N}(\mu_1 u_{1}^2+\mu_2 u_{2}^2)+\left(1-\frac{2}{2^*}
\right)\int_{\R^N}|u_{1}|^p|u_{2}|^q \\
&= \mathcal{I}_{2^*}(u_{1},u_{2})-\frac 12\mathcal{J}_{2^*}(u_{1},u_{2}) \\
&\leq\lim_{n\to\infty}\left(\mathcal{I}_{p_n+q_n}(u_{1,n},u_{2,n})
-\frac 12\mathcal{J}_{p_n+q_n}(u_{1,n},u_{2,n}) \right)\\
&=\lim_{n\to\infty}\mathcal{I}_{p_n+q_n}(u_{1,n},u_{2,n}).\endaligned$$
Thus, the proof is completed.
\end{proof}

\begin{lemma}\label{63}
Let $(u_{1,p+q},u_{2,p+q})$ and $(u_{1,n},u_{2,n})$ be as above. Suppose that
$$\lim\limits_{n\to\infty}\mathcal{I}_{p_n+q_n}(u_{1,n},u_{2,n})\in\left(0,\frac 1N S^{\frac N2}\right),$$
where $S$ is the best
constant of
Sobolev imbedding from $H^1(\R^N)$ into $L^{2^*}(\R^N)$. Then, up to
translations,
$u_{i,n}\rightharpoonup u_i\neq0$ in $H^1(\R^N)$.
\end{lemma}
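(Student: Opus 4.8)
The plan is to argue by contradiction and rule out the loss of compactness caused by concentration, the only obstruction left once the energy lies below the first critical level. Set $t_n^2:=|\nabla u_{1,n}|_2^2+|\nabla u_{2,n}|_2^2$, $C_n:=\int_{\R^N}|u_{1,n}|^{p_n}|u_{2,n}|^{q_n}$ and $M_n:=\int_{\R^N}(\mu_1u_{1,n}^2+\mu_2u_{2,n}^2)$. As in the proof of Lemma~\ref{le62}, the sequence $(u_{1,n},u_{2,n})$ is bounded in $\mathcal H(\R^N)$, and arguing as in \eqref{eq23}--\eqref{eq24} (the constants being uniform since $p_n+q_n$ stays in a compact subset of $(2,2^*]$) one has $C_n\ge C_2>0$. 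Since each $(u_{1,n},u_{2,n})\in\mathcal N_{p_n+q_n}$, testing the Nehari identity gives the clean form $c_n:=\mathcal I_{p_n+q_n}(u_{1,n},u_{2,n})=\tfrac12 M_n+(1-\tfrac{2}{p_n+q_n})C_n$, so that $c^*:=\lim_n c_n\in(0,\tfrac1N S^{N/2})$. Up to a subsequence $u_{i,n}\rightharpoonup u_i$ in $H^1(\R^N)$; by Lions' concentration--compactness lemma, a persistent translate $y_n$ with $\int_{B_1(y_n)}(|u_{1,n}|^2+|u_{2,n}|^2)\ge\delta>0$ produces, by local compactness, a nonzero weak limit of the translated pair, and a refinement using $C_n\ge C_2$ then forces both components to be nontrivial, i.e. $u_i\neq0$ for $i=1,2$. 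Hence it suffices to exclude the vanishing case $\sup_{y\in\R^N}\int_{B_1(y)}(|u_{1,n}|^2+|u_{2,n}|^2)\to0$, in which $u_{i,n}\to0$ in $L^s(\R^N)$ for every $s\in(2,2^*)$, and I would do this by showing that vanishing would force $c^*\ge\tfrac1N S^{N/2}$.

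The core estimate converts vanishing into a Sobolev-critical energy lower bound. On $\mathcal N_{p_n+q_n}$ the balance $t_n^2+\int_{\R^N}(\omega_1u_{1,n}^2+\omega_2u_{2,n}^2)=\int_{\R^N}(\mu_1u_{1,n}^2\log u_{1,n}^2+\mu_2u_{2,n}^2\log u_{2,n}^2)+2C_n$ holds. Bounding $C_n$ from above by Hölder, the interpolation $|u_{i,n}|_{p_n+q_n}\le|u_{i,n}|_2^{1-\theta_n}|u_{i,n}|_{2^*}^{\theta_n}$ (with $\theta_n\to1$ as $p_n+q_n\to2^*$) and the sharp Sobolev inequality $|u_{i,n}|_{2^*}^2\le S^{-1}|\nabla u_{i,n}|_2^2$, one obtains $C_n\le(\kappa_n S^{-2^*/2}+o(1))\,t_n^{2^*}$, where $\kappa_n$ is the best constant of the coupled critical inequality; the hypothesis $p,q>1$ guarantees $\kappa_n\le\tfrac12$ (equivalently, that the least-energy coupled concentration profile costs at least $\tfrac1N S^{N/2}$, no less than a single Aubin--Talenti bubble). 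Once the lower-order ($L^2$ and logarithmic) terms in the balance are shown to be negligible, it reduces to $2C_n=t_n^2(1+o(1))$, and combining the two yields $t_n^2\ge S^{N/2}+o(1)$, hence $C_n\ge\tfrac12 S^{N/2}+o(1)$. Since $M_n\ge0$, this gives $c^*\ge\tfrac2N\liminf_n C_n\ge\tfrac1N S^{N/2}$, the desired contradiction; the restriction $N\ge4$ enters here, ensuring the bubble has finite $L^2$-mass so that the profile analysis is meaningful.

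The main obstacle is the control of the logarithmic integrals $\int_{\R^N}u_{i,n}^2\log u_{i,n}^2$ in the vanishing regime, since $s^2\log s^2$ changes sign and its negative part on $\{|u_{i,n}|<1\}$ is not dominated by the $L^s$-norms that vanish. To handle this I would split into $\{|u_{i,n}|\le1\}$ and $\{|u_{i,n}|>1\}$ as in Lemma~\ref{lemma:1}: on $\{|u_{i,n}|>1\}$ the superlinear part is bounded by $\int_{\R^N}|u_{i,n}|^s\to0$ for a fixed $s\in(2,2^*)$, while the positive part of the logarithmic integral is absorbed into $\varepsilon\, t_n^2$ through the logarithmic Sobolev inequality \eqref{eq:5} with $a$ small, the residual being $O\big((1+|\log M_n|)M_n\big)$. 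The second, intertwined point is that one must show $M_n\to0$: from $C_n\ge C_2$ the surviving $L^{2^*}$-mass cannot spread (spreading would force $C_n\to0$), and, there being by assumption no translate along which $L^2$-mass persists, it concentrates as a single bubble whose $L^2$-mass vanishes; this makes all lower-order terms $o(t_n^2)$ and legitimizes the reduction $2C_n=t_n^2(1+o(1))$. A final technical care is the uniform passage $p_n+q_n\to2^*$ in the interpolation--Sobolev step, which I would make rigorous by freezing an auxiliary exponent $s\in(2,2^*)$ and using the boundedness of $|u_{i,n}|_2$ and $|u_{i,n}|_{2^*}$.
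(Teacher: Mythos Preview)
Your overall strategy—boundedness, Lions' dichotomy, and ruling out vanishing by forcing the energy up to $\tfrac1N S^{N/2}$—matches the paper's. The genuine gap is your insistence on proving $M_n\to0$. In the Lions vanishing regime one only has $|u_{i,n}|_r\to0$ for $2<r<2^*$; the $L^2$-mass may persist (it merely spreads), so $M_n$ need not tend to zero. Your justification (``the surviving $L^{2^*}$-mass concentrates as a single bubble whose $L^2$-mass vanishes'') conflates vanishing with concentration and is in any case wrong for $N=4$, where the Aubin--Talenti profile has infinite $L^2$-norm; the restriction $N\ge4$ plays no role in this lemma (it enters only in the energy estimate of the next lemma).

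The paper avoids this obstacle entirely by using only a \emph{one-sided} bound on the logarithmic term: since $s^2\log s^2\le C s^{2+\delta}$ for all $s>0$, one has $\int u_{i,n}^2\log u_{i,n}^2\le C|u_{i,n}|_{2+\delta}^{2+\delta}=o(1)$ in the vanishing case. Combined with Young's inequality $2|u_{1,n}|^{p_n}|u_{2,n}|^{q_n}\le \tfrac{2p_n}{p_n+q_n}|u_{1,n}|^{p_n+q_n}+\tfrac{2q_n}{p_n+q_n}|u_{2,n}|^{p_n+q_n}$ and the pointwise interpolation $s^{p_n+q_n}\le\alpha_n s^{2^*}+(1-\alpha_n)s^{p_0+q_0}$ (convexity of $t\mapsto s^t$), the Nehari identity yields directly
\[
t_n^2+\omega\text{-terms}\;\le\;|u_{1,n}|_{2^*}^{2^*}+|u_{2,n}|_{2^*}^{2^*}+o(1)\;\le\;S^{-\frac{N}{N-2}}\,t_n^{\frac{2N}{N-2}}+o(1),
\]
from which the dichotomy ($t_n\to0$ or $\liminf t_n^2\ge S^{N/2}$) and the contradiction follow without ever touching $M_n$. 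In particular there is no need for your sharp coupled constant $\kappa_n\le\tfrac12$ or for the two-sided reduction $2C_n=t_n^2(1+o(1))$. Dropping the $M_n\to0$ step and replacing your interpolation--Sobolev estimate by this one-sided route would close the argument.
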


\begin{proof} Since $\{\mathcal{I}_{p_n+q_n}(u_{1,n},u_{2,n})\}$ is uniformly bounded and
	 $(u_{1,n},u_{2,n})$ are
	 solutions for system
	 \eqref{eq:106} with $p+q=p_n+q_n$.
Taking $a>0$ small enough in \eqref{eq:5} yields
$$\int_{\R^N} u_i^2\log u_i^2\leq\frac 12|\nabla u_i|_2^2+C_1(\log|u_i|_2^2+1)|u_i|_2^2,\ \
\text{for all}\ \ i=1,2 \ \ \text{and}\ \ u_i\in H^1(\R^N).$$
Since $ \mathcal{J}_{p_n+q_n}(u_{1,n},u_{2,n})=0$, we have that
\begin{equation}\label{65}
\aligned \ &\ \int_{\R^N}(|\nabla u_{1,n}|^2+\nabla u_{2,n}|^2+
\omega_1 u_{1,n}^2
+\omega_2 u_{2,n}^2 )\\
\ \leq&\ \frac{\mu_1}{2}|\nabla u_1|_2^2+C_1\mu_1(\log |u_1|_2^2+1)|u_1|_2^2
+\frac{\mu_2}{2}|\nabla u_2|_2^2+C_1\mu_2(\log |u_2|_2^2+1)|u_2|_2^2
+2\int_{\R^N}|u_{1,n}|^p|u_{2,n}|^q,\endaligned
\end{equation}
which implies that $\{(u_{1,n},u_{2,n})\}$ is bounded in $\mathcal{H}(\R^N)$.
We claim that $u_1,\ u_2\neq 0$.
If $|u_i|_2=0,\ i=1,2$, according to interpolation inequality, we have $|u_{i,n}|_r\to0$
for any $2<r<2^*,\ i=1,2$.
By using the fact $ \mathcal{J}_{p_n+q_n}(u_{1,n},u_{2,n})=0$, we obtain
$$\aligned\ &\int_{\R^N}(|\nabla u_{1,n}|^2+|\nabla u_{2,n}|^2+\omega u_{1,n}^2
+\omega u_{2,n}^2 )\\
\  =&\ \mu_1\int_{\R^N} u_{1,n}^2\log u_{1,n}^2
+\mu_2\int_{\R^N} u_{2,n}^2\log u_{2,n}^2
+2\int_{\R^N}|u_{1,n}|^{p_n}|u_{2,n}|^{q_n}\\
\ \leq&\ \frac{2p}{p+q}\int_{\R^N}|u_{1,n}|^{p_n+q_n}+\frac{2q}{p+q}\int_{\R^N}|u_{2,n}|^{p_n+q_n}\\
\ \leq&\ C_{p_0,q_0}\int_{\R^N}(|u_{1,n}|^{p_0+q_0}+|u_{2,n}|^{p_0+q_0})
+\frac{p_n+q_n-(p_0+q_0)}{2^*-(p_0+q_0)}
\int_{\R^N}(|u_{1,n}|^{2^*}+|u_{2,n}|^{2^*})\\
\  &\ +\frac{2^*-(p_n+q_n)}{2^*-(p_0+q_0)}
\int_{\R^N}(|u_{1,n}|^{p_0+q_0}+|u_{2,n}|^{p_0+q_0})\\
\ =&\ \int_{\R^N}|u_{1,n}|^{2^*}+\int_{\R^N}|u_{2,n}|^{2^*}+o_n(1)\\
\ \leq&\ S^{\frac{N}{2-N}}\left(\left(\int_{\R^N}|\nabla u_{1,n}|^2\right)
^{\frac{N}{N-2}}+
\left(\int_{\R^N}|\nabla u_{2,n}|^2\right)^{\frac{N}{N-2}}\right)+o_n(1),\endaligned$$
where $2<p_0+q_0<p_n+q_n<2^*-\delta$ for some positive constant $\delta$.
From this we deduce that either
$$\int_{\R^N}(|\nabla u_{1,n}|^2+|\nabla u_{2,n}|^2)\to0\ \ \text{or}\ \
\liminf\limits_{n\to\infty}\int_{\R^N}(|\nabla u_{1,n}|^2+|\nabla u_{2,n}|^2)
\geq S^{\frac{N}{2}}.$$
 If $$\int_{\R^N}(|\nabla u_{1,n}|^2+|\nabla u_{2,n}|^2)\to0,$$ then
 $$\int_{\R^N} (|u_{1,n}|^{2^*}+|u_{2,n}|^{2^*})\to0,$$ $u_{i,n}\to0$
 in $H^1(\R^N),\ i=1,2$
 and $\mathcal{I}_{p_n+q_n}(u_{1,n},u_{2,n})\to0$, a contradiction.
 If $$\liminf\limits_{n\to\infty}\int_{\R^N}(|\nabla u_{1,n}|^2+|\nabla u_{2,n}|^2)
 \geq S^{\frac{N}{2}} ,$$
 then $$\liminf\limits_{n\to\infty}2\int_{\R^N}|u_{1,n}|^{p_n}|u_{2,n}|^{q_n} \geq S^{\frac{N}{2}} ,$$
thus $$\liminf\limits_{n\to\infty}\mathcal{I}_{p_n+q_n}(u_{1,n},u_{2,n})
=\liminf\limits_{n\to\infty}\left(\frac 12\int_{\R^N}(|u_{1,n}|^2+|u_{2,n}|^2)
+\frac{p+q-2}{p+q}\int_{\R^N}|u_{1,n}|^{p}|u_{2,n}|^{q}\right)\geq \frac 1N  S^{\frac{N}{2}},$$
 which is also a contradiction. So, for every $\rho_1,\rho_2>0$, we get $u_i\neq 0,\ i=1,2$.
\end{proof}

\begin{lemma}\label{64}
If $N\geq 4$, then $c_{2^*}<\frac {1}{N}S^{\frac N2}$.
\end{lemma}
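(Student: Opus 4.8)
The plan is to bound $c_{2^*}$ from above by a single well-chosen test pair, exploiting that by Step 1 of Lemma \ref{le62} every $(u_1,u_2)\in\mathcal H(\R^N)\setminus\{(0,0)\}$ has a unique Nehari dilation, so that
$$c_{2^*}=\inf_{(u_1,u_2)\neq(0,0)}\ \max_{t>0}\mathcal I_{2^*}(tu_1,tu_2).$$
Hence it suffices to produce one pair along which the maximal energy is strictly below $\tfrac1N S^{N/2}$. Following the Br\'ezis--Nirenberg scheme and the single-equation analysis in \cite{deng}, I would take the Aubin--Talenti extremals $U_\varepsilon(x)=c_N\varepsilon^{(N-2)/2}(\varepsilon^2+|x|^2)^{-(N-2)/2}$ of $S$, truncated by a fixed cutoff $\eta\in C_c^\infty(\R^N)$ to obtain $u_\varepsilon:=\eta U_\varepsilon$. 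The truncation is essential here: it guarantees $u_\varepsilon\in\mathcal H(\R^N)$ (so that both $u_\varepsilon\in L^2$ and $u_\varepsilon^2\log u_\varepsilon^2\in L^1$), which is exactly the borderline issue forcing $N\ge4$, since the untruncated bubble just fails to be in $L^2$ when $N=4$. I would test with the two components taken proportional to the common profile, $(u_1,u_2)=(u_\varepsilon,u_\varepsilon)$ (equivalently a fixed admissible ratio), so that the coupling term collapses to $\int|u_\varepsilon|^{p}|u_\varepsilon|^{q}=\int u_\varepsilon^{2^*}$ and the computation reduces to a scalar Br\'ezis--Nirenberg-type estimate.

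Next I would record the classical bubble asymptotics and isolate the one genuinely new ingredient. The standard expansions give $|\nabla u_\varepsilon|_2^2=S^{N/2}+O(\varepsilon^{N-2})$, $|u_\varepsilon|_{2^*}^{2^*}=S^{N/2}+O(\varepsilon^{N})$, and
$$|u_\varepsilon|_2^2=\begin{cases} d_N\,\varepsilon^2+O(\varepsilon^{N-2}), & N\ge5,\\[2pt] d_4\,\varepsilon^2|\log\varepsilon|+O(\varepsilon^2), & N=4.\end{cases}$$
The key estimate to establish is the size and sign of the logarithmic integral $\int u_\varepsilon^2\log u_\varepsilon^2$. Rescaling $x=\varepsilon y$ and using $U_\varepsilon(\varepsilon y)=\varepsilon^{-(N-2)/2}U_1(y)$ yields $\int u_\varepsilon^2\log u_\varepsilon^2=\varepsilon^2\big(-(N-2)\log\varepsilon\big)\!\int_{\{|y|\lesssim1/\varepsilon\}}U_1^2\,dy+\varepsilon^2\!\int U_1^2\log U_1^2\,dy+\cdots$, which is a positive quantity of order $\varepsilon^2|\log\varepsilon|$ (order $\varepsilon^2|\log\varepsilon|^2$ when $N=4$, because of the extra logarithm in $|u_\varepsilon|_2^2$). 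Since $\int u_\varepsilon^2\log u_\varepsilon^2$ enters $\mathcal I_{2^*}$ through the term $-\tfrac12\mu_i\int u_{\varepsilon}^2(\log u_\varepsilon^2-1)$, it contributes with a \emph{negative} sign and therefore lowers the energy; Lemma \ref{lemma:1} is convenient to control the integrand and justify dominated-convergence/truncation errors.

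I would then insert these estimates into $g(t):=\mathcal I_{2^*}(tu_\varepsilon,tu_\varepsilon)$, which has the shape $g(t)=\tfrac{t^2}{2}P_\varepsilon-\tfrac{t^2\log t^2}{2}Q_\varepsilon-\tfrac{2}{2^*}t^{2^*}R_\varepsilon$ with $P_\varepsilon=2|\nabla u_\varepsilon|_2^2+\text{(mass/log corrections)}$, $Q_\varepsilon=(\mu_1+\mu_2)|u_\varepsilon|_2^2$, $R_\varepsilon=|u_\varepsilon|_{2^*}^{2^*}$. The maximizer $t_\ast$ exists and is unique by the argument of Step 1 of Lemma \ref{le62}, and it is an $O(\varepsilon^2|\log\varepsilon|)$ perturbation of the pure-critical maximizer (the $t^2\log t^2$ term is harmless because $\log t_\ast^2=O(1)$). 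Expanding $g(t_\ast)$, the leading contribution is the pure-critical value, the Sobolev truncation error is $O(\varepsilon^{N-2})$, while the logarithmic gain is negative of order $\varepsilon^2|\log\varepsilon|$. For $N\ge5$ one has $\varepsilon^{N-2}=o(\varepsilon^2|\log\varepsilon|)$ and for $N=4$ the gain $\varepsilon^2|\log\varepsilon|^2$ beats the error $\varepsilon^2$, so in both cases the negative logarithmic term dominates the error and $g(t_\ast)$ drops strictly below the threshold once $\varepsilon$ is small enough.

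The main obstacle is precisely this balance of orders: one must show that the definite-sign logarithmic contribution strictly outweighs the positive error coming from the truncated Sobolev extremal, and the delicate case is $N=4$, where both the $L^2$-norm and the logarithmic integral carry an extra $|\log\varepsilon|$ factor and the Sobolev error is only $O(\varepsilon^2)$ rather than $o(\varepsilon^2)$; tracking the exact powers of $\varepsilon$ and $|\log\varepsilon|$ there is what pins down the requirement $N\ge4$ (the argument degenerates for $N=3$). Once the strict inequality $\max_{t>0}\mathcal I_{2^*}(tu_\varepsilon,tu_\varepsilon)<\tfrac1N S^{N/2}$ is secured for small $\varepsilon$, the definition of $c_{2^*}$ as an infimum over the Nehari set gives $c_{2^*}<\tfrac1N S^{N/2}$, completing the proof.
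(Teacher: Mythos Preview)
Your overall strategy matches the paper's: pick truncated Aubin--Talenti profiles, record the classical expansions for the gradient and $L^{2^*}$ norms, show that $\int u_\varepsilon^2\log u_\varepsilon^2$ is positive of order $\varepsilon^2|\log\varepsilon|$ (respectively $\varepsilon^2|\log\varepsilon|^2$ when $N=4$), and argue that this negative contribution to $\mathcal I_{2^*}$ beats the Sobolev truncation error. The paper proceeds exactly this way, citing Br\'ezis--Nirenberg and the single-equation logarithmic estimate of \cite{deng}.

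There is, however, a genuine gap in your choice of test pair. With both components equal to $u_\varepsilon$, the pure-critical part of the energy is
\[
\frac{t^2}{2}\cdot 2|\nabla u_\varepsilon|_2^2-\frac{2}{2^*}\,t^{2^*}|u_\varepsilon|_{2^*}^{2^*}
\ \approx\ t^2 S^{N/2}-\frac{2}{2^*}\,t^{2^*}S^{N/2},
\]
whose maximum over $t>0$ is attained at $t=1$ and equals $\dfrac{2}{N}S^{N/2}$, not $\dfrac{1}{N}S^{N/2}$. The logarithmic gain you identify is only $o(1)$ as $\varepsilon\to0$ and therefore cannot close an $O(1)$ gap; your expansion actually gives $\max_{t>0}\mathcal I_{2^*}(tu_\varepsilon,tu_\varepsilon)\to\frac{2}{N}S^{N/2}$, which lies above the claimed threshold. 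The paper handles this by bringing in the coupled Sobolev constant of Alves--de Morais Filho--Souto \cite{alves},
\[
\tilde S=\Bigl[(p/q)^{q/2^*}+(q/p)^{p/2^*}\Bigr]S,
\]
and taking $(u_{1,\varepsilon},u_{2,\varepsilon})$ to be appropriate scalar multiples of $u_\varepsilon$ chosen so that the leading expansions read $\int(|\nabla u_{1,\varepsilon}|^2+|\nabla u_{2,\varepsilon}|^2)=\tilde S^{N/2}+O(\varepsilon^{(N-2)/2})$ and $\int|u_{1,\varepsilon}|^p|u_{2,\varepsilon}|^q=\tilde S^{N/2}+O(\varepsilon^{N/2})$. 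You should adopt the ratio $u_{1,\varepsilon}=\alpha u_\varepsilon$, $u_{2,\varepsilon}=\beta u_\varepsilon$ with $\alpha^2/\beta^2=p/q$ (the extremal ratio for $\tilde S$) rather than the diagonal pair, and carry $\tilde S$ through the final comparison; the remainder of your outline---uniqueness of the Nehari dilation, the bounded location of $t_*$, and the order balance distinguishing $N\ge5$ from $N=4$---is in line with the paper.
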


\begin{proof} We follow the   idea of Br\'{e}zis-Nirenberg \cite{5} and also  Alves, de Morais Filho and Souto
\cite{alves}. We look for a nonnegative function $(u_1,u_2)$ such that
$\sup\limits_{t\geq 0}\mathcal{I}_{2^*}(tu_1,tu_2)<\frac 1N S^{\frac N2}$. Suppose
$\mathcal{I}_{2^*}(t_0u_1,t_0u_2)=\sup\limits_{t\geq0}(tu_1,tu_2)$,
then $\mathcal{J}_{2^*}(t_0u_1,t_0u_2)=0$ and
$\mathcal{I}_{2^*}(t_0u_1,t_0u_2)<\frac 1N S^{\frac N2}$.
Hence $c_{2^*}<\mathcal{I}_{2^*}(t_0u_1,t_0u_2)< \frac 1N S^{\frac N2}$.
One of the possible candidates for $u$ is $u_{\varepsilon}=\phi w_\varepsilon$, where
$\phi $ is a smooth cut-off function such that $\phi(x)=1$ if $|x|\leq1 \
\phi(x)=0$ if $|x|\geq2$ and $|\nabla\phi|\leq2$; and
$$w_\varepsilon(x)=\frac{(N(N-2)\varepsilon)^{\frac{N-2}{4}}}{(\varepsilon+|x|^2)^{\frac{N-2}{2}}} .$$
Note that the function $w_\varepsilon$  solves the equation
$\Delta w_\varepsilon+w_\varepsilon^{\frac{N+2}{N-2}} =0$. Following \cite{5}, for
sufficiently small $\varepsilon$, by a direct computation we estimate the terms of
$ \mathcal{I}_{2^*}(tu_{1,\varepsilon},tu_{2,\varepsilon})$ as
follows
 $$ \int_{\R^N}|\nabla u_\varepsilon|^2=S^{\frac N2}+O(\varepsilon^{\frac{N-2}{2}}),\ \
 \int_{\R^N} |u_\varepsilon|^{2^*}=S^{\frac N2}+O(\varepsilon^{\frac N2}),$$
 $$\int_{\R^N} |u_\varepsilon|^2\log u_\varepsilon ^2 \geq
 \left\{\aligned\ &d_1\varepsilon |\ln \varepsilon|,\ \ &if\ \ N\geq5,\\
\ &d_2 \varepsilon |\ln \varepsilon |^2,\ \ &if\ \  N=4,\endaligned\right.$$
where $\varepsilon$ is a positive small constant. According to \cite[Theorem 5]{alves},
\begin{equation}\label{eq6.7}
	 \tilde{S}=\inf_{(u_1,u_2)\in \mathcal{H}(\mathbb R^N)\backslash \{{(0,0)}\}}=
\frac{\int_{\R^N}(|\nabla u_1|^2+|\nabla u_2|^2)}
{\left(\int_{\R^N}|u_1|^p|u_2|^q\right)^{\frac{2}{p+q}}}
=\left(\left(\frac pq\right)^{\frac{q}{p+q}}+\left(\frac qp\right)^{\frac{p}{p+q}}\right)S,
\ \ p+q\leq 2^*.
\end{equation}
Similarly, we can compute
 $$ \int_{\R^N}(|\nabla u_{1,\varepsilon}|^2
+|\nabla u_{2,\varepsilon}|^2) =\tilde{S}^{\frac N2}+O(\varepsilon^{\frac{N-2}{2}}),\ \
\int_{\R^N}|u_{1,\varepsilon}|^p|u_{2,\varepsilon}|^q=\tilde{S}^{\frac N2}+O(\varepsilon^{\frac N2}).$$
It is easy to see that there exist
$\varepsilon_0>0$ and $0<T_1<T_2$ such that for $\varepsilon\leq\varepsilon_0$,
the function $t\mapsto \mathcal{I}_{2^*}(tu_{1,\varepsilon},tu_{2,\varepsilon})$ achieves its maximum at
some $t_0\in[T_1,T_2]$. Hence, if $N\geq4$, by using \eqref{eq6.7} we have
$$\aligned \sup_{t\geq0}\mathcal{I}_{2^*}(tu_{1,\varepsilon},tu_{2,\varepsilon})&=
\frac{t_0^2}{2}\int_{\R^N}(|\nabla u_{1,\varepsilon}|^2
+|\nabla u_{2,\varepsilon}|^2+\omega_1 u_{1,\varepsilon}^2
+\omega_2 u_{2,\varepsilon}^2)\\
&\ \ \ \ -\frac{t_0^2}{2}\int_{\R^N}
\left(\mu_1 u_{1,\varepsilon}^2(\log u_{1,\varepsilon}^2-1)+
\mu_2 u_{2,\varepsilon}^2(\log u_{2,\varepsilon}^2-1)\right)
-\frac{2t_0^{2^*}}{2^*}\int_{\R^N}|u_{1,\varepsilon}|^{p}|u_{2,\varepsilon}|^{q}\\
&\leq \left(\frac{t_0^2}{2}-\frac{2t_0^{2^*}}{2^*}\right) \tilde{S}^{\frac N2}-
\varepsilon |\ln \varepsilon|+O\left( \varepsilon^{\frac{(N-2)(2-\delta)}{4}}\right)\\
&\leq  \frac 1N S^{\frac N2}\endaligned$$
for $\varepsilon$ small enough. Thus we complete the proof.
\end{proof}

\begin{proof}[Proof of Theorem \ref{thm:6}]
By Lemma \ref{64}, $c_{2^*}<\frac 1N S^{\frac N2}$ if $ N\geq 4$. We
claim that
the minimum of $\mathcal{I}_{p+q}|_{\mathcal{N}_{p+q}}$ is indeed a solution of
\eqref{eq:1071} (see \cite[Lemma 2.5]{lww},\cite[Theorem 1.1]{mma22} and
\cite[Proof of Theorem 1.1]{chenzhang}).  Let $(\tilde{u}_1,\tilde{u}_2)\in
\mathcal{N}_{p+q}$  be a minimizer of the
functional $\mathcal{I}_{p+q}|_{\mathcal{N}_{p+q}}$. Then from Lemma \ref{le62} , one has
$$\mathcal{I}_{p+q}(\tilde{u}_1,\tilde{u}_2)=\inf_{(u_1,u_2)\in \mathcal{H}(\R^N)}
\max\limits_{t>0}\mathcal{I}_{p+q}(tu_{1},tu_{2})=c_{p+q}.$$
Suppose by contradiction that $(\tilde{u}_1,\tilde{u}_2)$ is not
a weak solution of \eqref{eq:1071}. Then, one can find $\phi_{1},\phi_{2}
\in C_{0}^{\infty}(\mathbb R^{N})$  such that
$$\aligned\ &\ \langle \mathcal{I}_{p+q}'(\tilde{u}_1,\tilde{u}_2),(\phi_{1},\phi_{2})\rangle\\
\ =&\ \int_{\mathbb R^{N}}(\nabla\tilde{u}_1\nabla\phi_{1}+\omega_1
\tilde{u}_1\phi_{1}+
\nabla \tilde{u}_2\nabla\phi_{2}+\omega_2\tilde{u}_2\phi_{2})\\
\ &\ -\int_{\mathbb R^{N}}\bigg(\mu_1 \tilde{u}_1\phi_{1}\log\tilde{u}_1^2
+\mu_2 \tilde{u}_2\phi_{2}
\log \tilde{u}_2^2+\frac{2p}{p+q}|\tilde{u}_2|^{q}|\tilde{u}_1|^{p-2}\tilde{u}_1\phi_{1} +\frac{2q}{p+q}|\tilde{u}_1|^{p}|\tilde{u}_2|^{q-2}\tilde{u}_2\phi_{2}\bigg)\\
\ <&\ -1.\endaligned$$
We choose small  $\varepsilon> 0$ such that
$$\langle \mathcal{I}_{p+q}' (t\tilde{u}_{1}+\sigma\phi_{1},t\tilde{u}_{2}+\sigma\phi_{2}),
(\phi_{1},\phi_{2}) \rangle\leq -\frac{1}{2},\ \ |t-1|,|\sigma|\leq\varepsilon, $$
and introduce a cut-off function $0\leq \zeta\leq1 $ satisfying $\zeta(t)=1 $
for $|t-1|\leq\frac{\varepsilon}{2}$ and $\zeta(t)=0 $ for $|t-1|\geq\varepsilon. $
For $ t\geq0$, we define
$$\gamma_{1}(t):=\left\{\aligned &t\tilde{u}_{1}, &\hbox{if}\ |t-1|\geq\varepsilon,\\
&t\tilde{u}_{1}+\varepsilon\zeta(t)\phi_{1}, &\hbox{if}\ |t-1|<\varepsilon,\endaligned\right. $$
$$\gamma_{2}(t):=\left\{\aligned &t\tilde{u}_{2}, &\hbox{if}\ |t-1|\geq\varepsilon,\\
&t\tilde{u}_{2}+\varepsilon\zeta(t)\phi_{2}, &\hbox{if}\ |t-1|<\varepsilon.\endaligned\right. $$
Then $ \gamma_{i}(t)$ is a continuous curve and, eventually choosing a
smaller $ \varepsilon$, we get that $\|(\gamma_{i}(t),0)\|_{H^1(\R^N)}>0,$ for $|t-1|<\varepsilon$, $i=1,2$.

Next we claim that $\sup\limits_{t\geq0}\mathcal{I}_{p+q}(\gamma_{1}(t),\gamma_{2}(t))<c_{p+q}.$
Indeed, if $|t-1|\geq\varepsilon $, then $$\mathcal{I}_{p+q}(\gamma_{1}(t),\gamma_{2}(t))
=\mathcal{I}_{p+q}(t\tilde{u}_{1},t\tilde{u}_{2})<\mathcal{I}_{p+q}(\tilde{u}_1,\tilde{u}_2)=c_{p+q}.$$
If $ |t-1|<\varepsilon$, by using the mean value theorem to the $C^{1}$  map
$[0,\varepsilon]\ni \sigma\mapsto \mathcal{I}_{p+q}(t\tilde{u}_{1}+\sigma\zeta(t)
\phi_{1},t\tilde{u}_{2}+\sigma\zeta(t)\phi_{2})\in\mathbb R$, we find, for a
suitable $\bar{\sigma}\in(0,\varepsilon)$,
$$\aligned\ &\ \mathcal{I}_{p+q}(t\tilde{u}_{1}+\sigma\zeta(t)\phi_{1},
t\tilde{u}_{2}+\sigma\zeta(t)\phi_{2})\\
\ =&\ \mathcal{I}_{p+q}(t\tilde{u}_{1},t\tilde{u}_{2})
+\langle \mathcal{I}_{p+q}'(t\tilde{u}_{1}+\bar{\sigma}\zeta(t)\phi_{1},t\tilde{u}_{2}
+\bar{\sigma}\zeta(t)\phi_{2}),(\zeta(t)\phi_{1},\zeta(t)\phi_{2})\rangle\\
\ \leq&\ \mathcal{I}_{p+q}(t\tilde{u}_{1},t\tilde{u}_{2})-\frac{1}{2}\zeta(t)\\
\ <&\ c_{p+q}.\endaligned$$
To conclude observe that
$$G(\gamma_{1}(1-\varepsilon),\gamma_{2}(1-\varepsilon))>0,\ \
G(\gamma_{1}(1+\varepsilon),\gamma_{2}(1+\varepsilon))<0.$$
By using the continuity of the map $t\mapsto G(\gamma_{1}(t),\gamma_{2}(t))$
there exists $t_{0}\in(1-\varepsilon,1+\varepsilon) $
such that $G(\gamma_{1}(t_{0}),\gamma_{2}(t_{0}))=0$. Namely,
$$ (\gamma_{1}(t_{0}),\gamma_{2}(t_{0}))=
(t_{0}\tilde{u}_{1}
+\varepsilon\zeta(t_{0})\phi_{1},t_{0}\tilde{u}_{2}+\varepsilon\zeta(t_{0})\phi_{2})\in
\mathcal{N}_{p+q},$$  and $\mathcal{I}_{p+q}(\gamma_{1}(t_{0}),\gamma_{2}(t_{0}))<c_{p+q}$,
this is a contradiction.
According to  Lemma \ref{le62},
for $2<p+q<2^*$, system \eqref{eq:1071}
has a non-negative solution $(u_{1,p+q},u_{2,p+q})$ with
$\mathcal{I}_{p+q}(u_{1,p+q},u_{2,p+q})=c_{p+q}$. We claim that
\begin{equation}\label{eq6.9}
	\limsup\limits_{p+q\to 2^*}\mathcal{I}_{p+q}(u_{1,p+q},u_{2,p+q})\leq c_{2^*}.
\end{equation}	
Indeed, given $0<\varepsilon <\frac 12$, we find $(u_1,u_2)\in\mathcal{N}_{p+q}$
such that
$\mathcal{I}_{2^*}(u_1,u_2)\leq c_{2^*}+\varepsilon $. We first
choose $T>0$ such that $\mathcal{J}_{2^*}(Tu_1^\pm,Tu_2^\pm)\leq 1$,
where $u_i^+=\max\{u_i,0\},\ u_i^-=\min\{u_i,0\},\ i=1,2$. We then
choose $\delta>0$ such that
\begin{equation}\label{eq6.8}
|\mathcal{I}_{p+q}(tu_1^\pm,tu_2^\pm)-\mathcal{I}_{2^*}(tu_1^\pm,tu_2^\pm)|+
|\mathcal{J}_{p+q}(tu_1^\pm,tu_2^\pm)-\mathcal{J}_{2^*}(tu_1^\pm,tu_2^\pm)|
<\varepsilon
\end{equation}
for $2^*-\delta\leq p+q\leq 2^*$ and $0\leq t\leq T$.
Since $\mathcal{J}_{p+q}(0,0)=0$, $\mathcal{J}_{p+q}(Tu_1^\pm,Tu_2^\pm)\leq-\frac12$
for all $2^*-\delta\leq p+q\leq 2^*$, there exist
$t^+,\ t^-\in (0,T)$ such that
$(\bar u_1,\bar u_2):=(t^+u_1^++ t^-u_1^-,t^+u_2^++t^-u_2^-)\in\mathcal{N}_{p+q}.$
Hence, according to \eqref{eq6.8}, we get
$$\aligned c_{p+q}&\leq \mathcal{I}_{p+q}(\bar u_1,\bar u_2)\\
&\leq
\mathcal{I}_{p+q}(t^+u_1^+,t^+u_2^+)
+\mathcal{I}_{p+q}(t^-u_1^-,t^-u_2^-)\\
&\leq
\mathcal{I}_{2^*}(t^+u_1^+,t^+u_2^+)
+\mathcal{I}_{2^*}(t^-u_1^-,t^-u_2^-)+ 2\varepsilon\\
&\leq\mathcal{I}_{2^*}(u_1,u_2)+2\varepsilon \\
&\leq c_{2^*}+3\varepsilon,\endaligned$$
for $2^*-\delta\leq p+q\leq 2^*$.
This completes the proof of the claim since
$\varepsilon$ is arbitrarily. By Lemma \ref{63}, for a
subsequence $p_n+q_n\to 2^*$ up to translations,we have
$u_{i,n}:=u_{i,p_n+q_n}\rightharpoonup
u_i\neq 0$ in $\mathcal{H}(\R^N)$. By \eqref{eq6.9}, Lemmas \ref{32} and \ref{61}, we see that
$(u_1,u_2)\in \mathcal{H}(\R^N)$ solves problem \eqref{eq:1071} and satisfies
$$\mathcal{I}_{2^*}(u_{1},u_{2})\leq\limsup\limits_{p+q\to 2^*}\mathcal{I}_{p+q}(u_{1,p+q},u_{2,p+q})
\leq c_{2^*}.$$
But $c_{2^*}\leq \mathcal{I}_{2^*}(u_{1},u_{2})$ for any nontrivial
critical point $(u_{1},u_{2})$ of $\mathcal{I}_{2^*}$ since
$$\mathcal{J}_{2^*}(u_{1},u_{2})=\langle\mathcal{I}_{2^*}^\prime(u_{1},u_{2}),(u_{1},u_{2})\rangle=0.$$
By the proof of Step 4
in Lemma \ref{le62} , we know that $u_1\neq 0$ or $u_2\neq0$, without loss of
generality, we assume $u_1\neq 0$.
According to \cite{log}, we take
$$\beta(s)=\mu_2 s\log s^2+\frac{2q}{p+q}u_1^p s^{q-1}-\omega_2 s,$$
since $\beta$ is continuous,
nondecreasing for $s$ small, $\beta(0)=0$ and
$$\beta\left(\sqrt{e^{\frac{\omega_2}{\mu_2}-\frac{2q}{(p+q)\mu_2 u_1^p s^{q-2}}}}\right)=0,$$
by \cite[Theorem 1]{28} we have that each solution $u_2\geq0$
of \eqref{eq:1}
such that $u_2\in L^1_{loc}(\R^N)$ and $\Delta u_2\in L^1_{loc}(\R^N)$
in the sense of distribution, is either trivial or stricly positive. If $u_2\equiv0$,
similar to the above process, we can get $u_1>0$.
\end{proof}

\vskip0.20in
\noindent{\bf Acknowledgments}

\vskip0.20in
The authors are very grateful to the anonymous referees for their careful reading and for their very helpful comments and suggestions.

\end{document}